\documentclass[12pt]{article}
\usepackage[british]{babel}
\usepackage[babel]{csquotes}
\usepackage{amsmath}
\usepackage{amssymb}
\usepackage{amsthm}     
\usepackage{amsfonts}   
\usepackage{mathtools}  
\usepackage{mathrsfs}   
\usepackage{bbm}        
\usepackage{ragged2e}

\usepackage{graphicx}
\usepackage{caption}
\usepackage{subcaption}
\usepackage{float}

\usepackage{tikz}
\usepackage{pgfplots} 
\usepackage{lscape} 

\usepackage{hyperref}
\usepackage{enumitem}
\usepackage{ulem}
\usepackage{url}
\usepackage{eurosym}

\newtheorem{defi}{Definition}[section]
\newtheorem{ass}[defi]{Assumption}

\newtheorem{thm}[defi]{Theorem}
\newtheorem{lem}[defi]{Lemma}
\newtheorem{prop}[defi]{Proposition}

\newtheorem{cor}[defi]{Corollary}
\newtheorem{rem}[defi]{Remark}

\def\P{\mathbb{P}}								
\def\E{\mathbb{E}}                              

\newcommand\expec[1]{\mathbb{E}\left[#1\right]}	    
\newcommand\prob[1]{\mathbb{P}\left(#1\right)}	



\newcommand{\sph}{\mathbb{S}}

\newcommand\R{\mathbb{R}}
\newcommand\N{\mathbb{N}}
\newcommand\Z{\mathbb{Z}}

\newcommand\eps{\varepsilon}
\newcommand\vphi{\varphi}


\newcommand\abs[1]{\left\lvert #1 \right\rvert}  
\newcommand\norm[1]{\left\Vert #1 \right\Vert}
\newcommand\abrac[1]{\left\langle #1 \right\rangle}            



\DeclareMathOperator\spann{span}
\DeclareMathOperator\sign{sign}

\let\BFseries\bfseries\def\bfseries{\BFseries\mathversion{bold}}
\pgfplotsset{compat=1.18}

\setlength{\jot}{10pt}								
\setlength{\parindent}{0cm}							
\parskip 0.1cm

\RequirePackage{amsthm,amsmath,amsfonts,mathrsfs,amssymb,hyperref}

\DeclareMathAlphabet{\mathdutchcal}{U}{dutchcal}{m}{n}
\SetMathAlphabet{\mathdutchcal}{bold}{U}{dutchcal}{b}{n}
\DeclareMathAlphabet{\mathdutchbcal}{U}{dutchcal}{b}{n}
\newcommand{\ind}{1\hspace{-0.098cm}\mathrm{l}}
\newcommand{\indi}[1]{\,\ind_{\{#1\}}}
\def\dd{\mbox{d}}
\def\E{\mathbb{E}}

\def\P{\mathbb{P}}
\def\R{\mathbb{R}}
\def\eps{\varepsilon}
\def\sign{\operatorname*{{sign}}}

\textheight=23cm
\textwidth=16cm
\voffset=-1.5cm
\hoffset=-1.5cm
\parskip=2mm 
\def\deq{\mathrel{\stackrel{d}{=}}} 
\def\S{\mathbb{S}}

\setlength{\parindent}{0pt}

\usepackage{enumitem}
\setenumerate[1]{label=(\textup{\alph*})}
\setenumerate[2]{label=(\textup{\roman*})}


\newcommand{\ph}{\frac{\pi}{2}}
\newcommand{\eh}{\frac{1}{2}}
\newcommand{\hah}{(H+\alpha)/2}
\newcommand{\ltw}{L_2^w[-1,1]}
\newcommand{\bb}{\mathcal{B}}
\newcommand{\hh}{\mathcal{H}}

\newcommand{\hb}{\mathbb{H}}

\newcommand{\OO}{\mathcal{O}}
\newcommand{\ol}[1]{\overline{#1}}
\newcommand{\vol}[1]{\abs{\sph_{#1}}}

\newcommand{\an}[1]{a_n^{(#1)}}
\newcommand{\anbr}[1]{\left[a_n^{\left(#1\right)}\right]}
\newcommand{\bn}[1]{b_n^{(#1)}}
\newcommand{\bnbr}[1]{\left[b_n^{\left(#1\right)}\right]}
\newcommand{\fda}{f_{\delta,\alpha}}
\newcommand{\gh}{G^{(H)}}

\title{Persistence probabilities of spherical fractional Brownian motion}
\author{Frank Aurzada and  Max Helmer}

\begin{document}

\maketitle

\begin{abstract}
We compute the rate of decay of the persistence probabilities of spherical fractional Brownian motion, which was defined by L\'evy \cite{Levy1965} and Istas \cite{sphereFBM}. The rate resembles the Euclidean case treated in \cite{molchan99}.

As a by-product we consider the coefficients of series representations of functions with algebraic endpoint singularities in terms of re-scaled Gegenbauer polynomials, which partly generalises \cite{Sidi09}.
\end{abstract}

\noindent {\bf Keywords}: asymptotic expansion; endpoint singularity; Gaussian field; Gaussian process; Gegenbauer polynomial; Legendre polynomial; L\'evy's spherical Brownian motion; reproducing kernel Hilbert space; spherical fractional Brownian motion; Toponogov's theorem

\medskip
 \noindent {\bf 2020 Mathematics Subject Classification}: 60G22, 60G15, 60G60 (primary); 42C05, 42C10, 41A60 (secondary)

\allowdisplaybreaks

\section{Introduction}

\subsection{Main result}
The subject of this paper is situated in the area of stochastic processes under constraints. An important theme here is the study of persistence probabilities, which are meant as a tool to study the behaviour of stochastic processes when they have large sections away from a boundary, cf.\ the monograph \cite{metzler} and the surveys \cite{bray} and \cite{aurzadasimon} on the subject. In particular, one is interested in understanding the repulsion effect of the boundary. In this work, we contribute to this by studying Gaussian random fields (i.e.\ processes with multi-dimensional index sets) and ask what is the effect of the geometry of the index set on the persistence probabilities. We refer to the survey \cite{lodhiaetal2016} with extensive bibliography
as a reference on fractional Gaussian fields. In this work, we consider fractional Brownian motion (FBM) indexed by the sphere \cite{sphereFBM}. 

Let us start by recalling the definition of usual FBM first: A stochastic process $(B_H(t))_{t\in\R}$ (indexed by the real line) is called fractional Brownian motion if it is a continuous, centred Gaussian process with covariance function
\begin{equation} \label{eqn:fbmcov}
\E[ B_H(t) B_H(s) ] = \frac{1}{2} \left( |t|^{2H}+|s|^{2H} - |t-s|^{2H} \right), \qquad t,s\in \R.
\end{equation}
Here, the number $H\in(0,1)$ is a parameter (called the Hurst parameter). 
FBM itself is a generalisation of Brownian motion, which is included as the special case $H=1/2$. We refer to the monographs \cite{biaginibook,mishurabook,nourdinbook} for a detailed account.
 
Let us now consider the case of $\R^d$ as index set. For any $H\in(0,1)$ there is a continuous, centred Gaussian process $(B_H(t))_{t\in\R^d}$ (indexed by $\R^d$) with covariance function
\begin{equation} \label{eqn:fbmcovd}
\E[ B_H(t) B_H(s) ] = \frac{1}{2} \left( ||t||_2^{2H}+||s||_2^{2H} - ||t-s||_2^{2H} \right), \qquad t,s\in \R^d,
\end{equation}
where $||.||_2$ denotes the usual Euclidean norm on $\R^d$ and we stress the analogy to (\ref{eqn:fbmcov}). 

In this context, Molchan showed the following result concerning persistence probabilities \cite{molchan99} (also see (\ref{eqn:molchanformulation2}) below): For FBM $(B_{H}(t))_{t\in\R^d}$ one has
        \begin{align}
            \P\left( \sup_{t\in[-1,1]^d} B_H(t) < \eps\right) = \eps^{\frac{d}{H} + o(1)}, \qquad \text{ as } \eps\to 0. \label{eqn:molchanddim}
        \end{align}

Our main result deals with the sphere instead of the Euclidean space. We fix a dimension $d\geq 2$ for the rest of the paper and consider the index set
\begin{align*}
\S_{d-1} := \{ \eta \in \R^d : ||\eta||_2 = 1 \}.
\end{align*}
Further, we let $d(\eta,\zeta):=\arccos(\langle \eta,\zeta\rangle)$ denote the geodesic distance of two points $\eta,\zeta\in\S_{d-1}$ on the sphere. Let $O\in \S_{d-1}$ be an arbitrary point that we fix for the rest of this paper (`the north pole'). We recall from \cite{sphereFBM} that for any $0<H\leq 1/2$ there exists a continuous, centred Gaussian process $(S_H(\eta))_{\eta\in\S_{d-1}}$ with
\begin{equation}
\E[ S_H(\eta) S_H(\zeta) ] = \frac{1}{2} \left( d(\eta,O)^{2H}+d(\zeta,O)^{2H} - d(\eta,\zeta)^{2H} \right), \qquad \eta,\zeta\in \S_{d-1}, \label{eqn:covspherical}
\end{equation}
in analogy to (\ref{eqn:fbmcov}) and (\ref{eqn:fbmcovd}). Note that the definition yields $S_H(O)=0$ almost surely.

A process $(S_H(\eta))_{\eta\in\S_{d-1}}$ with (\ref{eqn:covspherical}) is called spherical fractional Brownian motion (SFBM) with Hurst parameter $H$ in dimension $d$. For $H=1/2$, the process was studied already by L\'evy \cite{Levy1965} and is also referred to as L\'evy's spherical Brownian motion. We stress that for $H>1/2$, the right-hand side of (\ref{eqn:covspherical}) is not a covariance function so that no SFBM exists for $H>1/2$, cf.\ \cite{sphereFBM}.

Given this notation, we can state our main result.

\begin{thm} \label{thm:main}
       For SFBM $(S_{H}(\eta))_{\eta\in\S_{d-1}}$ we have
        \begin{align} \label{eqn:main}
            \P\left( \sup_{\eta\in \S_{d-1}} S_{H}(\eta) < \eps\right) = \eps^{\frac{d-1}{H} + o(1)}, \qquad \text{ as } \eps\to 0.
        \end{align}
\end{thm}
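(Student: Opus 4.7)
My plan is to adapt the proof strategy for the Euclidean result \eqref{eqn:molchanddim} by exploiting that the sphere is locally Euclidean near $O$. Matching upper and lower bounds will be established separately, both by comparison of SFBM on a small geodesic cap around $O$ with a $(d-1)$-dimensional Euclidean FBM on the corresponding tangent disk.

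For the \textbf{lower bound}, fix small $r > 0$, set $C := \{\eta \in \S_{d-1} : d(\eta, O) \leq r\}$, and parametrise $C$ via the inverse exponential map $\phi : C \to B(0, r) \subset \R^{d-1}$; in particular $|\phi(\eta)|_2 = d(\eta, O)$. By Toponogov's theorem for the positively curved sphere, $d(\eta, \zeta) \leq |\phi(\eta) - \phi(\zeta)|_2$ on $C$, and together with the radial identity this yields
\[
\cov(S_H(\eta), S_H(\zeta)) \geq \cov\bigl(B_H^{(d-1)}(\phi(\eta)), B_H^{(d-1)}(\phi(\zeta))\bigr), \qquad \eta, \zeta \in C,
\]
with equal variances at matched points. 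Slepian's lemma and \eqref{eqn:molchanddim} (applied to a cube containing $B(0,r)$) then give $\P(\sup_C S_H < \eps) \geq \eps^{(d-1)/H + o(1)}$. To transfer from $C$ to the full sphere, note that SFBM has non-negative covariances (by the spherical triangle inequality together with the subadditivity of $x \mapsto x^{2H}$ on $[0, \infty)$, since $2H \leq 1$), so Pitt's associativity inequality gives
\[
\P\Bigl(\sup_{\S_{d-1}} S_H < \eps\Bigr) \geq \P\Bigl(\sup_C S_H < \eps\Bigr) \cdot \P\Bigl(\sup_{\S_{d-1} \setminus C} S_H < \eps\Bigr),
\]
and the second factor has a strictly positive limit as $\eps \to 0$: there being no pinning on $\S_{d-1} \setminus C$, a standard Cameron--Martin shift by an RKHS element majorising $-M$ on $\S_{d-1} \setminus C$ (built from finitely many kernel sections $-K(\cdot, \zeta_i)$) gives $\P(\sup_{\S_{d-1} \setminus C} S_H \leq 0) > 0$.

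For the \textbf{upper bound} the Slepian comparison above runs the wrong way, so a finer tool is required. The abstract's mention of endpoint-singularity expansions in rescaled Gegenbauer polynomials points to the approach: pass to the Karhunen--Loève expansion of $S_H$ in the spherical-harmonic basis, which by rotational invariance around $O$ reduces the analysis of the covariance to the one-dimensional expansion of $t \mapsto \arccos(t)^{2H}$ (for $t \in [-1,1]$) in a suitably rescaled Gegenbauer basis. This function has an algebraic endpoint singularity at $t = -1$, and the asymptotic decay of its Gegenbauer coefficients---the by-product announced in the abstract, partly generalising \cite{Sidi09}---pins down the effective dimension of $S_H$ near $O$ as $d-1$. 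From a high-frequency truncation of the expansion one then extracts an approximate $(d-1)$-dimensional Euclidean FBM structure on a shrinking cap, to which the upper bound in \eqref{eqn:molchanddim} applies; combined with the trivial restriction $\P(\sup_{\S_{d-1}} S_H < \eps) \leq \P(\sup_C S_H < \eps)$ this yields the matching estimate.

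The \textbf{main obstacle} is the upper bound: the quantitative Euclidean approximation on a shrinking cap, strong enough to transfer Molchan's bound, rests on the fine asymptotic analysis of the Gegenbauer coefficients of an endpoint-singular function---exactly the reason that analysis appears in the paper as a by-product of independent interest. Once it is in hand, the lower bound is the more elementary direction, being essentially a Slepian--Pitt argument given Toponogov's theorem.
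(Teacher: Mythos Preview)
Your lower bound is essentially the paper's argument: Toponogov's comparison on (a subset of) the upper hemisphere gives equal variances and larger covariances for $S_H$ than for the pulled-back Euclidean FBM, so Slepian applies; the complement is split off using non-negative correlations, and its contribution is a positive constant. The paper carries this out on the full half-sphere $\mathcal H(O)$ rather than a small cap and handles the constant via a Kolmogorov--Chentsov argument, but your version is fine.

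The upper bound sketch, however, does not match the paper and has a genuine gap. You correctly note that Slepian runs the wrong way, but your proposed remedy---a Karhunen--Lo\`eve truncation yielding an ``approximate Euclidean FBM on a shrinking cap'' to which Molchan's Euclidean upper bound transfers---is not what the paper does, and it is not clear it can be made to work. On any cap the radial variances of $S_H$ and $B_H^{(d-1)}\circ\phi$ match \emph{exactly}, so you cannot simply rescale the Euclidean side to reverse the covariance inequality; a two-sided local comparison would require a quantitative error analysis tying the cap radius to $\eps$, and nothing in your outline indicates how the high-frequency truncation achieves this.

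The paper's route is entirely different. The Gegenbauer endpoint-singularity analysis is used not for a KL approximation but to \emph{construct an explicit RKHS function} $f_{\delta,\alpha}$ with two properties: (i) $f_{\delta,\alpha}\ge 1$ on $\S_{d-1}\setminus\mathcal B_\delta(O)$, and (ii) $\|\eps f_{\delta,\alpha}\|_{\mathbb H}$ stays bounded when $\delta=\eps^{1/(H+\alpha)}$. The precise coefficient asymptotics (your ``by-product'') are needed to verify (ii), since one must divide by the Legendre coefficients $a_n^{(H)}$ of $1-(\arccos/\pi)^{2H}$ and control the resulting $L_2$-norm. A Cameron--Martin shift lemma then bounds the persistence probability by $e^{O(\sqrt{|\log\eps|})}\,\P(S_H(\eta)\le 0\ \forall\,\eta\notin\mathcal B_\delta(O))$. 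The final, and crucial, ingredient you are missing is an \emph{occupation-time} input: the recent result that $Z_-:=|\S_{d-1}|^{-1}\int\ind_{\{S_H\le 0\}}\,d\sigma$ is uniform on $[0,1]$ gives $\P(S_H\le 0\text{ off }\mathcal B_\delta(O))\le |\mathcal B_\delta(O)|/|\S_{d-1}|\asymp\delta^{d-1}=\eps^{(d-1)/(H+\alpha)}$, and letting $\alpha\downarrow 0$ finishes. None of these three pieces---the tailored RKHS drift, the shift lemma, and the uniform law of $Z_-$---appears in your proposal.
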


This result should be compared to the Euclidean case cited in (\ref{eqn:molchanddim}). There, a $d$-dimensional index set is considered, while $\S_{d-1}$ is a $(d-1)$-dimensional manifold. In fact, in our proofs, we shall compare local `pieces' of the persistence probability in (\ref{eqn:main}) to $(d-1)$-dimensional Euclidean persistence probabilities that can be handled with a variant of (\ref{eqn:molchanddim}). In order to make this kind of comparison, we employ Toponogov's theorem from Riemannian geometry. Another ingredient of the proof is a comparison to the `area' that SFBM spends positive $Z_+:=\int_{\S_{d-1}} \ind_{S_H(\xi)>0} \dd \sigma(\xi)$ (where $\sigma$ is the surface measure on $\S_{d-1}$), the distribution of which was found recently in \cite{aurdorpit2024}.
Further, we employ a change of measure technique, which in turn requires precise knowledge about the reproducing kernel Hilbert space (RKHS) of SFBM.

The general strategy of approaching the RHKS of SFBM through generalised Legendre polynomials is inspired by \cite{Istas2006} and \cite[Chapter 4.2]{Istas12Manifold}. Contrary to the mentioned papers, we will need very precise information about the asymptotics of the coefficients of the Legendre series of concrete functions. Obtaining such asymptotic behaviour requires a generalisation of a result in \cite{Sidi09} concerning the asymptotics of series coefficients for functions with end point singularities. Our result in this direction (cf.\ Theorem~\ref{thm_LegendreCoeffOrder} below) may be of independent interest.

The outline of the paper is as follows. We continue in Section~\ref{sec:related} with a discussion of related works and formulate a number of open questions attached to those and to our main result. The goal of Section~\ref{sec_RKHS} is to give a representation of the reproducing kernel Hilbert space of SFBM. Here, generalised Legendre polynomials and series representations w.r.t.\ the latter will play a crucial role.
In Section~\ref{sec_polynomialApprox}, we spell out the mentioned generalisation of the result in \cite{Sidi09} concerning the asymptotics of the Legendre series coefficients of functions with end point singularities. Using this preliminary work, we can then define a suitable concrete function in the RKHS of SFBM in Section~\ref{sec_existencePartFktRKHS}. Finally, Section~\ref{sec_proofMain} is dedicated to the proof of Theorem~\ref{thm:main}.

\subsection{Related work and open problems} \label{sec:related}
Let us review some related results and list a number of open questions in this context. 

\paragraph*{The Euclidean case.} First of all, let us mention that it is more common to cite the result (\ref{eqn:molchanddim}) in the following form: For an FBM $(B_H(t))_{t\in \R^d}$ on $\R^d$,
\begin{equation}
    \label{eqn:molchanformulation2}
\P\left( \sup_{t\in[-T,T]^d} B_H(t) < 1\right)=T^{-d+o(1)}, \qquad \text{ as } T\to\infty,
\end{equation}
which was proved in \cite[Theorem 3]{molchan99}. We chose the formulation in (\ref{eqn:molchanddim}) in order to allow for a comparison to the case of the sphere, which is a fixed and compact index space. The equivalence of  (\ref{eqn:molchanddim}) and (\ref{eqn:molchanformulation2}) follows immediately from the self-similarity property $(B_H(ct))_{t\in\R^d}\deq(|c|^H B_H(t))_{t\in\R^d}$, for any $c\in\R$. The result (\ref{eqn:molchanformulation2}) can be compared to several related pieces of work.

First of all, in the one-dimensional case it boils down to
\begin{equation}
    \label{eqn:molchanformulation2d1}
\P\left( \sup_{t\in[-T,T]} B_H(t) < 1\right)=T^{-1+o(1)}, \qquad \text{ as } T\to\infty.
\end{equation}
This is in contrast to the `one-sided' persistence result obtained in  \cite[Theorem 1]{molchan99}: For FBM $(B_H(t))_{t \geq 0}$ (indexed by the non-negative real line), one has
\begin{align}
            \P\left( \sup_{t\in [0,T]} B_H(t) < 1\right) = T^{-(1-H)+o(1)}, \qquad \text{ as } T\to\infty. \label{eqn:onesidedmolchan}
\end{align}

We stress that the position of $0$ is crucial. The result (\ref{eqn:onesidedmolchan}) handles the case where $0$ is at the boundary of the index set. On the contrary, (\ref{eqn:molchanddim}) (i.e.\ (\ref{eqn:molchanformulation2})) as well as our main result, Theorem~\ref{thm:main}, deal with the case when $0$ (or the north pole $O$, respectively) is within the index set.

It would be of great interest to shed more light on the role of the position of $0$ at the boundary. One work on this direction is \cite{molchan2018} (also see \cite{molchan2017}): If the fixed bounded domain $K\subseteq \R^d$ has a smooth boundary at $0$ (i.e.\ $0\in\partial K$ and there is a ball containing $0$ that is contained in $K$: i.e.\ $\exists r>0, t_0\in\R^d : 0\in \{ t : ||t-t_0||\leq r\}\subseteq K$) then
        \begin{align} \label{eqn:zeroinaballmolchan}
            \P\left(\sup_{t\in T K} B_H(t) < 1\right) = T^{-(d-H) + o(1)}, \qquad \text{ as } T\to\infty;
        \end{align}
and one might write the exponent as $(d-1)+(1-H)$ and compare to (\ref{eqn:molchanformulation2}) and (\ref{eqn:onesidedmolchan}). In particular, this is true for the example $K=[0,1]\times[-1,1]^{d-1}$.

These results lead Molchan to conjecture (see e.g.\ \cite{molchan2017}) that for the domains $K := [0,1]^k \times [-1,1]^{d-k}$ (where $0\leq k\leq d$) it should be true that
\begin{align*}
            \P\left( \sup_{t\in T K} B_H(t) < 1\right) = T^{-(d-k H)  + o(1)}, \qquad \text{ as } T\to\infty,
\end{align*}
and again the exponent should be read as $(d-k)+k\cdot(1-H)$, cf.\ (\ref{eqn:molchanformulation2}) and (\ref{eqn:onesidedmolchan}). This conjecture is true for $k=0$ (which is (\ref{eqn:molchanformulation2})) and $k=1$ (which follows from (\ref{eqn:zeroinaballmolchan})). In particular, it would be interesting to find the asymptotics in the case $k=d$, which would yield a $d$-dimensional generalisation of (\ref{eqn:onesidedmolchan}).

Yet another result is given in \cite{molchan2012} and considers an asymmetric one-dimensional domain:
        \begin{align*}
            \P\left( \sup_{t\in [-T^\alpha,T]} B_H(t) < 1 \right) = T^{ -[(1-\alpha)(1-H)\; + \; \alpha \cdot 1] + o(1)}, \qquad \text{ as } T\to\infty,
        \end{align*}
where w.l.o.g.\ $\alpha \in [0,1]$, and we stress that the persistence exponent is precisely the convex combination of the exponents from (\ref{eqn:molchanformulation2d1}) and (\ref{eqn:onesidedmolchan}). A $d$-dimensional analogue does not seem to be available.

\paragraph*{The spherical case.} 
Similar questions as in the Euclidean case can also be phrased in the setup of spherical fractional Brownian motion. To formulate one corresponding question, fix an additional arbitrary point ${\bf 1}\in\S_{d-1}$ with $\langle {\bf 1},O\rangle=0$ (i.e.\ ${\bf 1}$ lies `on the equator' of $\S_{d-1}$). For points $\eta\in\S_{d-1}\setminus \{ O,\ol{O}\}$,
where $\ol{O}$ is the point antipodal to $O$,
we consider the projection of $\eta$ to the equator: $\mathdutchcal{e}(\eta):=\frac{\eta-\langle\eta,O\rangle O}{||\eta-\langle\eta,O\rangle O||_2}$ (and set $\mathdutchcal{e}(O):=\mathdutchcal{e}(\ol{O}):={\bf 1}$). Then for an angle $\psi\in[0,2\pi]$, consider the set of points on the sphere that have a `longitude difference' to the point $1$ of at most $\psi/2$:
\begin{align*}
L_\psi := \{ \eta \in \S_{d-1} : d(\mathdutchcal{e}(\eta),{\bf 1}) \leq \psi/2 \}.
\end{align*}
It is reasonable to ask whether there exists and what is the value of $\theta=\theta(\psi,d,H)$ such that
\begin{align*}
\P\left( \sup_{\eta\in L_\psi} S_H(\eta) < \eps\right) = \eps^{ \frac{\theta}{H} + o(1)}, \qquad \text{ as } \eps\to 0.
\end{align*}
Our main result, Thereom~\ref{thm:main} states that $\theta(2\pi,d,H)=d-1$. A reduction argument to the one-dimensional case shows $\theta(0,d,H)=1-H$. Even in the case $d=3$, it would be very interesting to find $\theta(\psi,d,H)$ for $0<\psi<2\pi$. Further, given the result in (\ref{eqn:zeroinaballmolchan}) one is lead to conjecture that $\theta(\psi,d,H)=d-1-H$ for $\pi\leq \psi<2\pi$.

\paragraph*{The hyperbolic case.} Changing the geometry in a different way, one can also consider the real hyperbolic space $\mathbb{H}_{d-1}$:
        \begin{align*}
            \mathbb{H}_{d-1} := \left\{ (x_1,\ldots,x_{d}) \in\R^{d} \, :\,  x_1 > 0, \quad x_1^2 - \sum_{i=2}^{d} x_i^2 = 1 \right\}.
        \end{align*}
Existence of a hyperbolic fractional Brownian motion for $0<H\leq 1/2$ was also established by Istas \cite{sphereFBM}. Here, no result for persistence probabilities is available and similarly to the Euclidean and to the spherical case, many open questions can be formulated.

\section{The reproducing kernel Hilbert space of SFBM}\label{sec_RKHS}
\subsection{Preliminaries on orthogonal polynomials}\label{sec_prelimPoly}

In this section, we introduce Gegenbauer polynomials and, as a special case, generalised Legendre polynomials and collect some facts about these families of orthogonal polynomials from the literature that we need in the sequel.

Let $(C_n^{(\lambda)})_{n\in\N_0}$ be the Gegenbauer polynomials (cf.\ \cite[Introduction]{wang16}, \cite[Sec.\ 4.2.2]{Istas12Manifold}), also known as ultraspherical polynomials, with parameter $0 \neq \lambda > -\eh$ and $\deg C_n^{(\lambda)} = n$. They are an orthogonal set of polynomials w.r.t.\ the weight function $w(t) := (1-t^2)^{\lambda-\eh}$ on $[-1,1]$, i.e.\
\begin{align*}
    \int_{-1}^1 C_n^{(\lambda)}(t) C_m^{(\lambda)}(t) (1-t^2)^{\lambda-\eh} \dd t = 0,\qquad \text{for $n\neq m$},
\end{align*}
and are uniquely defined by Rodrigues' formula
\begin{align}\label{eq_RodriguesGegenbauerDef}
    C_n^{(\lambda)}(t) := \frac{(-1)^n}{2^n n!} \frac{\Gamma(\lambda + \eh) \Gamma(n+2\lambda)}{\Gamma(2\lambda)\Gamma(n+\lambda+\eh)} (1-t^2)^{\eh-\lambda} \frac{\partial^n}{\partial t^n} \left[ (1-t^2)^{\lambda-\eh} (1-t^2)^n \right],
\end{align}
where $\Gamma$ is the gamma function. The Gegenbauer polynomials satisfy
\begin{align} \label{eq_normalf}
    C_n^{(\lambda)}(1) = \frac{\Gamma(n+2\lambda)}{\Gamma(2\lambda) n!}.
\end{align}
For our purposes, we require the following re-normalised Gegenbauer polynomials parametrised by a fixed integer $d\geq 2$:
\begin{align*}
    P_n(t) := P_n^{(d)}(t) :=  \frac{C_n^{(\lambda)}(t)
    }{\frac{\Gamma(n+2\lambda)}{\Gamma(2\lambda) n!}},\qquad \text{ with } \lambda = \frac{d}{2}-1. 
\end{align*}
We shall suppress the index $d$ in the sequel for the sake of readability, assuming that the dimension $d\geq 2$ is fixed. Note that the definition also makes sense for $d=2$ since the critical factors $\Gamma(2\lambda)$  and $\Gamma(n+2\lambda)$ (for $n=0$) are formally eliminated from the Rodrigues representation (cf.\ Prop.\ 4.19 in \cite{frye12}).  We call these polynomials the {\it generalised Legendre polynomials} or {\it Legendre polynomials of dimension $d$}. They satisfy
    $P_n(1) = 1$,
for any $d \geq 2$, by definition and (\ref{eq_normalf}). We have chosen this naming convention, since for $d=3$ the generalised Legendre polynomials are exactly the Legendre polynomials. Our main source of reference on this topic is \cite{frye12}, in which a similar naming convention is adopted.

The generalised Legendre polynomials $(P_n)$ have a few more noteworthy properties that we use later on. First of all, they are symmetric for all even $n$ and anti-symmetric for all odd $n$. They are also uniformly bounded, i.e.\ $\abs{P_n(t)} \leq 1$ for all $t\in[-1,1]$ (cf.\ \cite[Proposition 4.15]{frye12}). These polynomials are also complete w.r.t.\ the inner product space $L_2^w[-1,1]$ with inner product defined by
\begin{align*}
    \abrac{f,g} = \int_{-1}^1 f(t)g(t) (1-t^2)^{(d-3)/2} \dd t,\qquad  f,g\in L_2^w[-1,1].
\end{align*}
We refer to this space as the weighted $L_2$-space with weight function $w(t) = (1-t^2)^{(d-3)/2}$. Completeness is implied by the well-known fact that Jacobi polynomials, which are a re-scaled generalisation of the Gegenbauer polynomials, are complete. Due to this completeness, we can introduce a series representation for $L_2^w$-functions w.r.t.\ the generalised Legendre polynomials. Calculating the series coefficients requires knowledge about the norm of the sequence $(P_n)$:
\begin{equation}
     \int_{-1}^1 P_n(t)^2 (1-t^2)^{(d-3)/2} \dd t = \frac{\abs{\sph_{d-1}}}{N(d,n) \abs{\sph_{d-2}}}, \label{eq_LegendreNorm}
\end{equation}  
where
\begin{align*}
 N(d,n):= \begin{cases} \frac{2n + d-2}{n}\binom{n+d-3}{n-1}, & n\geq 1; \\
  1, & n=0,
  \end{cases}
\end{align*}
and we let $\sigma$ be the surface measure on the $\sph_{d-1}$-ball and $\abs{\sph_{d-1}} = \sigma(\sph_{d-1})$ be its total surface area. For reference, see \cite[Theorem 4.4]{frye12}. 

Next, we want to introduce our first asymptotic estimate, which concerns the behaviour of $N(d,n)$ as $n\to\infty$. To express limits we use the standard notation $f\sim g$ if $\lim f/g = 1$.
Throughout the paper, the constant $c > 0$ is used in a generic way and it is made clear what the respective constant depends on.

At a few places in the paper, we compute limits of quotients of Gamma functions, for which we use the well-known asymptotics
\begin{align}\label{eq_gammaQuotient}
        \frac{\Gamma(x+a)}{\Gamma(x+b)} \sim c \; x^{a-b}, \qquad \text{ as } x \to \infty,
\end{align}
for fixed $a,b\in\R$. The constant $c$ depends on $a$ and $b$. We may immediately apply this to obtain that
\begin{align}\label{lem_orderOfNdn}
        N(d,n) \sim  c \; n^{d-2},
\end{align}
as $n\to\infty$, for some constant $c>0$, which only depends on the dimension $d$.

\subsection{Integral representations}\label{sec_integralRep}

In this section, we deal with the following problem: Let $f\in\ltw$. Is there a function $g\in\ltw$ such that  we have the integral representation
\begin{align*}
    f(\abrac{\eta,\zeta}) = \int_{\sph_{d-1}} g(\abrac{\eta, \xi}) g(\abrac{\zeta,\xi}) \dd\sigma(\xi),  \qquad \text{for all } \eta,\zeta\in \sph_{d-1} \; ?
\end{align*}

An answer sufficient for our purposes is given in Proposition~\ref{prop_sqIntegralRep}. We apply this knowledge to give a concrete represenation of the reproducing kernel Hilbert space of SFBM in Section~\ref{sec_repOfRKHSofSFBM}.

Let us start by stating an important connection between spherical integrals and integrals over $[-1,1]$ (cf.\ \cite[Lemma 4.17]{frye12}).

\begin{lem}\label{lem_simpleRadialFunctionIntegration}
    Let $\eta \in\sph_{d-1}$ and let $f\in L_2^w[-1,1]$. Then
    \begin{align*}
        \int_{\sph_{d-1}} f(\abrac{\eta, \xi}) \dd \sigma(\xi)
        = \abs{\sph_{d-2}} \int_{-1}^1 f(t)(1-t^2)^{(d-3)/2} \dd t.
    \end{align*}
\end{lem}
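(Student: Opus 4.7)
The plan is to reduce the spherical integral to a one-dimensional integral by exploiting rotational symmetry and introducing spherical coordinates adapted to $\eta$.

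First, I would use the fact that the surface measure $\sigma$ on $\sph_{d-1}$ is invariant under orthogonal transformations. Hence, after picking an orthogonal transformation mapping $\eta$ to a fixed reference vector (say, the north pole $e_1$), both sides of the identity are unchanged. So it suffices to prove the formula for $\eta = e_1$, in which case $\langle \eta, \xi\rangle = \xi_1$, and the integrand depends only on the first coordinate of $\xi$.

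Next, I would parametrise $\sph_{d-1}$ by the angle $\theta \in [0,\pi]$ between $\xi$ and $e_1$ together with a direction on the equatorial sphere $\sph_{d-2}$. Concretely, write $\xi = \cos\theta \cdot e_1 + \sin\theta \cdot \omega$ with $\omega \in \sph_{d-2}$. The standard product decomposition of the surface measure reads
\begin{align*}
\int_{\sph_{d-1}} F(\xi)\, \dd\sigma(\xi) = \int_0^\pi (\sin\theta)^{d-2} \int_{\sph_{d-2}} F(\cos\theta\cdot e_1 + \sin\theta \cdot \omega)\, \dd\sigma(\omega)\, \dd\theta.
\end{align*}
Applying this to $F(\xi) = f(\xi_1) = f(\cos\theta)$, the inner integral becomes $f(\cos\theta) \cdot |\sph_{d-2}|$, so that
\begin{align*}
\int_{\sph_{d-1}} f(\abrac{\eta,\xi})\, \dd\sigma(\xi) = |\sph_{d-2}| \int_0^\pi f(\cos\theta)(\sin\theta)^{d-2}\, \dd\theta.
\end{align*}

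Finally, I would substitute $t = \cos\theta$, so that $\dd t = -\sin\theta\, \dd\theta$ and $(\sin\theta)^{d-2}\dd\theta = (1-t^2)^{(d-2)/2}\cdot (1-t^2)^{-1/2}\dd t = (1-t^2)^{(d-3)/2}\dd t$, yielding exactly the claimed identity. The integrability follows from $f \in L_2^w[-1,1]$ combined with the Cauchy--Schwarz inequality (noting that $\int_{-1}^1 (1-t^2)^{(d-3)/2}\dd t = |\sph_{d-1}|/|\sph_{d-2}| < \infty$), so Fubini is justified. No real obstacle arises here; the only point requiring a little care is recording the product decomposition of $\sigma$ and the exponent bookkeeping in the substitution, both of which are standard.
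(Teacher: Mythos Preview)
Your argument is correct and is the standard one. The paper does not actually supply a proof of this lemma; it is merely cited from \cite[Lemma~4.17]{frye12}, and your spherical-coordinate computation is precisely the type of argument one finds there.
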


Furthermore, we can integrate w.r.t.\ Legendre polynomials (cf.\ \cite[Lemma 4.23]{frye12}).

\begin{lem}\label{lem_radialFunctionAndLegendreIntegration}
    Let $\eta, \zeta \in\sph_{d-1}$ and let $f\in L_2^w[-1,1]$. Then
    \begin{align*}
        \int_{\sph_{d-1}} f(\abrac{\eta, \xi}) P_n(\abrac{\zeta, \xi}) \dd \sigma(\xi)
        = \abs{\sph_{d-2}} P_n(\abrac{\eta, \zeta}) \int_{-1}^1 f(t) P_n(t) (1-t^2)^{(d-3)/2} \dd t,
    \end{align*}
    where $P_n$ is the $n$-th Legendre polynomial of dimension $d$.
\end{lem}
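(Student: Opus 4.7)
The plan is to expand $f$ in its generalised Legendre series and reduce the claim to a zonal orthogonality identity on $\sph_{d-1}$. By completeness of $(P_m)_{m\geq 0}$ in $\ltw$ together with the norm formula (\ref{eq_LegendreNorm}), first I would write
\[
f = \sum_{m=0}^\infty a_m P_m \text{ in } \ltw, \qquad a_m = \frac{N(d,m)\abs{\sph_{d-2}}}{\abs{\sph_{d-1}}} \int_{-1}^1 f(t) P_m(t) (1-t^2)^{(d-3)/2} \dd t,
\]
substitute this series into the left-hand side of the claimed identity, and exchange sum and integral.

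After the swap, the whole argument reduces to the zonal orthogonality identity
\[
\int_{\sph_{d-1}} P_m(\abrac{\eta,\xi}) P_n(\abrac{\zeta,\xi}) \dd\sigma(\xi) \;=\; \delta_{mn}\, \frac{\abs{\sph_{d-1}}}{N(d,n)}\, P_n(\abrac{\eta,\zeta}), \qquad m\geq 0, \quad (\ast)
\]
since then only the $m=n$ term of the Legendre series survives, yielding $a_n \cdot \frac{\abs{\sph_{d-1}}}{N(d,n)} P_n(\abrac{\eta,\zeta})$; by the formula for $a_n$, this equals exactly $\abs{\sph_{d-2}} P_n(\abrac{\eta,\zeta}) \int_{-1}^1 f(t) P_n(t) (1-t^2)^{(d-3)/2} \dd t$, which is the right-hand side of the lemma.

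For the identity $(\ast)$, I would invoke the addition theorem for spherical harmonics of dimension $d$ (available in \cite{frye12}): if $(Y_n^{(k)})_{k=1}^{N(d,n)}$ is an $L^2(\sph_{d-1})$-orthonormal basis of the space of degree-$n$ spherical harmonics, then $P_n(\abrac{\eta,\xi}) = \frac{\abs{\sph_{d-1}}}{N(d,n)} \sum_k Y_n^{(k)}(\eta) Y_n^{(k)}(\xi)$. Substituting this representation for each Legendre factor on the left of $(\ast)$, expanding the double product and integrating in $\xi$, the orthonormality of the spherical harmonics (including orthogonality of degree-$m$ against degree-$n$ harmonics for $m\neq n$) collapses the $\xi$-integral to $\delta_{mn}\sum_k Y_n^{(k)}(\eta) Y_n^{(k)}(\zeta)$ times an explicit constant, and a final application of the addition theorem at the pair $(\eta,\zeta)$ produces the right-hand side of $(\ast)$.

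The main obstacle I expect is justifying the term-by-term integration in the first step. I would handle it by noting that the partial sums $S_K := \sum_{m=0}^K a_m P_m$ converge to $f$ in $\ltw$; by Lemma~\ref{lem_simpleRadialFunctionIntegration} applied to $\abs{S_K - f}^2$, the map $g \mapsto g(\abrac{\eta,\cdot})$ is, up to the factor $\sqrt{\abs{\sph_{d-2}}}$, an isometry from $\ltw$ into $L^2(\sph_{d-1})$, so $S_K(\abrac{\eta,\cdot}) \to f(\abrac{\eta,\cdot})$ in $L^2(\sph_{d-1})$; finally, multiplication by the uniformly bounded function $P_n(\abrac{\zeta,\cdot})$ (recall $\abs{P_n}\leq 1$) followed by $\sigma$-integration is continuous on $L^2(\sph_{d-1})$ by Cauchy--Schwarz, which legitimises the exchange of sum and integral and completes the reduction.
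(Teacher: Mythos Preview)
Your argument is correct. The paper does not actually prove this lemma; it merely cites \cite[Lemma~4.23]{frye12} (the Funk--Hecke formula), so your write-up supplies what the paper omits. The expansion of $f$, the isometry argument via Lemma~\ref{lem_simpleRadialFunctionIntegration} to justify the swap, and the derivation of $(\ast)$ from the addition theorem are all sound.

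One remark on the route taken: the classical proof of Funk--Hecke is slightly more direct than yours. Rather than expanding $f$ in its Legendre series, one observes that $\xi\mapsto P_n(\abrac{\zeta,\xi})$ is itself a degree-$n$ spherical harmonic in $\xi$; the addition theorem then needs to be invoked only once (to expand $P_n(\abrac{\zeta,\xi})$ in an orthonormal basis $(Y_n^{(k)})_k$), after which the $\xi$-integral $\int_{\sph_{d-1}} f(\abrac{\eta,\xi}) Y_n^{(k)}(\xi)\,\dd\sigma(\xi)$ is identified as a constant multiple of $Y_n^{(k)}(\eta)$ either by rotation-invariance or by a one-dimensional reduction as in Lemma~\ref{lem_simpleRadialFunctionIntegration}. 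Your approach instead expands $f$ and uses the addition theorem twice inside $(\ast)$; this is a bit less economical but perfectly valid, and has the advantage that the key identity $(\ast)$ is exactly the $f=P_m$ case of the lemma, so the reduction is transparent.
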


As stated in the previous subsection, the Legendre polynomials form a complete, orthogonal set w.r.t.\ the weighted inner product space of $L_2^w[-1,1]$-functions. In particular, for any $f\in L_2^w[-1,1]$ there exists a series representation
\begin{align*}
    f(t) = \sum_{n=0}^\infty a_n P_n(t),
\end{align*}
which converges in $L_2^w[-1,1]$. The coefficients are given by
\begin{align}\label{legendre_orthogonal_decomposition}
    a_n = \frac{N(d,n) \vol{d-2} }{\vol{d-1}} \int_{-1}^1 f(t) P_n(t) (1-t^2)^{(d-3)/2} \dd t,
\end{align}
which can be inferred from the $L_2^w[-1,1]$ norm of the Legendre polynomials given in (\ref{eq_LegendreNorm}). We refer to the sequence $(a_n)$ as the Legendre series coefficients of the function $f$. Sometimes we can infer additional convergence properties.
\begin{lem}\label{lem_CoeffSummable}
    If the Legendre coefficients $(a_n)$ of a function $f$ are absolutely summable then the series converges uniformly on $[-1,1]$ and $f$ is necessarily uniformly continuous.
\end{lem}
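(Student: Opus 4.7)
The plan is to exploit the uniform bound $|P_n(t)|\leq 1$ on $[-1,1]$ (recalled in Section~\ref{sec_prelimPoly}, citing \cite[Proposition 4.15]{frye12}) together with the Weierstrass $M$-test. Concretely, set $S_N(t):=\sum_{n=0}^N a_n P_n(t)$. Since $|a_n P_n(t)|\leq |a_n|$ and $\sum_n |a_n|<\infty$ by assumption, the partial sums $(S_N)$ form a uniform Cauchy sequence on $[-1,1]$, hence converge uniformly to some limit $g\colon [-1,1]\to\R$.

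Next, I would argue that this uniform limit $g$ coincides with $f$. Each $S_N$ is a polynomial, hence continuous on $[-1,1]$, and a uniform limit of continuous functions is continuous, so $g$ is continuous. Since $[-1,1]$ is compact and the weight $(1-t^2)^{(d-3)/2}$ is integrable on $[-1,1]$, uniform convergence $S_N\to g$ on $[-1,1]$ implies convergence in $\ltw$. On the other hand, by completeness of the generalised Legendre polynomials (recalled in Section~\ref{sec_prelimPoly}), the partial sums $S_N$ converge to $f$ in $\ltw$. By uniqueness of limits in $\ltw$, $f=g$ in $\ltw$, that is, $f=g$ almost everywhere with respect to the weighted measure. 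Identifying $f$ with its continuous representative $g$ then yields that $f$ is continuous on $[-1,1]$.

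Finally, since $[-1,1]$ is compact and $f$ is continuous, the Heine--Cantor theorem gives uniform continuity.

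There is no real obstacle here; the statement is essentially the Weierstrass $M$-test combined with the uniform bound on the polynomials $P_n$. The only subtlety worth flagging is that completeness of $(P_n)$ is stated at the $\ltw$ level, so one has to pass from $\ltw$-convergence to pointwise (uniform) convergence via the identification of $f$ with its continuous version, as described above.
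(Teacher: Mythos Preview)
Your proposal is correct and follows essentially the same approach as the paper: the uniform bound $|P_n(t)|\le 1$ together with summability gives normal (equivalently, Weierstrass $M$-test) convergence, hence a continuous uniform limit, and compactness of $[-1,1]$ yields uniform continuity. You are in fact more careful than the paper's terse argument in explicitly identifying the uniform limit with $f$ via the $\ltw$ convergence; the paper leaves this step implicit.
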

\begin{proof}
    This is an immediate consequence of the summability assumption and the fact that $\abs{P_n(t)}\leq 1$ is uniformly bounded, which implies that the Legendre series is a normally convergent series. The limit of a normally convergent sequence must be continuous, and continuous functions on compact intervals are uniformly continuous.
\end{proof}

The following lemma lets us obtain the Legendre coefficients of the integral of a product of $\ltw$ functions. Since the proof is obtained by combining Lemma~\ref{lem_radialFunctionAndLegendreIntegration} with the representation of functions in $\ltw$ by their Legendre series using standard arguments, we omit it.

\begin{lem}\label{lem_productIntegration}
Let $f(t) = \sum_{n=0}^\infty a_n P_n(t)$ and $g(t) = \sum_{n=0}^\infty b_n P_n(t)$ be two functions in $L_2^w[-1,1]$ with the given series representation in terms of the basis of Legendre polynomials of dimension $d\geq 2$. Then for any $\eta,\zeta\in\sph_{d-1}$ we have
\begin{align*}
    \int_{\sph_{d-1}} f(\abrac{\eta,\xi})\; g(\abrac{\zeta,\xi}) \dd \sigma(\xi) = \vol{d-1} \sum_{n=0}^\infty \frac{ a_n b_n }{N(d,n)} P_n(\abrac{\eta, \zeta}).
\end{align*}
\end{lem}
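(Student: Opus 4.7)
The plan is to reduce the double series expansion to an application of Lemma~\ref{lem_radialFunctionAndLegendreIntegration} term by term. Fix $\eta,\zeta\in\sph_{d-1}$ and expand $g$ in its Legendre series, writing $g(\abrac{\zeta,\xi}) = \sum_{n=0}^\infty b_n P_n(\abrac{\zeta,\xi})$. The strategy is to interchange summation and the spherical integral, apply Lemma~\ref{lem_radialFunctionAndLegendreIntegration} to each term to produce a factor $P_n(\abrac{\eta,\zeta})$ together with a one-dimensional integral of $f(t) P_n(t)$ against the weight $(1-t^2)^{(d-3)/2}$, and then identify that one-dimensional integral in terms of $a_n$ via formula (\ref{legendre_orthogonal_decomposition}).

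First I would argue that $\xi\mapsto g(\abrac{\zeta,\xi})$ belongs to $L_2(\sph_{d-1},\sigma)$: by Lemma~\ref{lem_simpleRadialFunctionIntegration}, its squared $L_2$-norm equals $\abs{\sph_{d-2}}\int_{-1}^1 g(t)^2 (1-t^2)^{(d-3)/2}\dd t$, which is finite since $g\in\ltw$. The same is true for $\xi\mapsto f(\abrac{\eta,\xi})$. Hence the left-hand side of the claim is well-defined and, by the Cauchy--Schwarz inequality on the sphere, is a bounded (hence continuous) bilinear functional of $f$ and $g$ viewed as elements of $\ltw$. In particular, if we denote by $g_N(t) := \sum_{n=0}^N b_n P_n(t)$ the partial sums, which converge to $g$ in $\ltw$, the same computation shows that $g_N(\abrac{\zeta,\cdot})\to g(\abrac{\zeta,\cdot})$ in $L_2(\sph_{d-1},\sigma)$, and therefore
\begin{align*}
\int_{\sph_{d-1}} f(\abrac{\eta,\xi})\, g(\abrac{\zeta,\xi})\dd\sigma(\xi)
= \lim_{N\to\infty}\sum_{n=0}^{N} b_n \int_{\sph_{d-1}} f(\abrac{\eta,\xi})\, P_n(\abrac{\zeta,\xi})\dd\sigma(\xi).
\end{align*}

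Now I apply Lemma~\ref{lem_radialFunctionAndLegendreIntegration} to each summand, obtaining
\begin{align*}
\int_{\sph_{d-1}} f(\abrac{\eta,\xi})\, P_n(\abrac{\zeta,\xi})\dd\sigma(\xi)
= \vol{d-2}\, P_n(\abrac{\eta,\zeta}) \int_{-1}^1 f(t) P_n(t) (1-t^2)^{(d-3)/2} \dd t.
\end{align*}
The remaining one-dimensional integral is precisely $\frac{\vol{d-1}}{N(d,n)\vol{d-2}} a_n$ by (\ref{legendre_orthogonal_decomposition}), and substitution gives the summand $\vol{d-1}\frac{a_n b_n}{N(d,n)} P_n(\abrac{\eta,\zeta})$. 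Passing to the limit as $N\to\infty$ yields the asserted identity.

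The only genuinely non-routine point is the justification of the interchange of sum and integral; once one observes that radialisation via Lemma~\ref{lem_simpleRadialFunctionIntegration} turns the $L_2(\sph_{d-1})$-norm of $\xi\mapsto g(\abrac{\zeta,\xi})$ into the $\ltw$-norm of $g$, the interchange becomes a standard $L_2$ continuity argument and the rest is a direct algebraic substitution.
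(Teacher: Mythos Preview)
Your proof is correct and follows exactly the route the paper indicates (the paper omits the proof, noting only that it is obtained by combining Lemma~\ref{lem_radialFunctionAndLegendreIntegration} with the Legendre series representation of $\ltw$-functions via standard arguments). Your careful justification of the sum--integral interchange via the $L_2$ isometry coming from Lemma~\ref{lem_simpleRadialFunctionIntegration} is precisely the ``standard argument'' the paper alludes to.
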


The question of whether the Legendre series coefficients of a series representation are all positive and absolutely summable is central to our proof. If they are we can define a a useful integral representation. 

\begin{prop}\label{prop_sqIntegralRep}
    Let $f\in L_2^w[-1,1]$. If the Legendre coefficients of $f$ are non-negative and absolutely summable then there exists a function $g\in L_2^w[-1,1]$ such that
    \begin{align*}
        f(\abrac{\eta,\zeta}) = \int_{\sph_{d-1}} g(\abrac{\eta, \xi}) g(\abrac{\zeta,\xi}) \dd\sigma(\xi).
    \end{align*}
    In this case the Legendre series of $f$ converges in $\ltw$ and uniformly on $[-1,1]$.
\end{prop}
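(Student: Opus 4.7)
The plan is to read off the right choice of $g$ directly from Lemma~\ref{lem_productIntegration}. If one writes $f(t) = \sum_{n=0}^\infty a_n P_n(t)$ and hopes to have $g(t) = \sum_{n=0}^\infty b_n P_n(t)$ in $\ltw$, then Lemma~\ref{lem_productIntegration} forces
\[
\vol{d-1} \frac{b_n^2}{N(d,n)} = a_n, \qquad \text{i.e.} \qquad b_n := \sqrt{\frac{a_n N(d,n)}{\vol{d-1}}}.
\]
The hypothesis $a_n \geq 0$ is used precisely here so that $b_n$ is a real number; nothing else in the proof will need the sign assumption.

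Next I would verify that this choice actually produces an element of $\ltw$. Using the orthogonality of the $P_n$ with respect to the weight $(1-t^2)^{(d-3)/2}$ and the norm computation (\ref{eq_LegendreNorm}), the $\ltw$-norm of the partial sums satisfies
\[
\left\Vert \sum_{n=0}^M b_n P_n \right\Vert_{\ltw}^2 = \sum_{n=0}^M b_n^2 \cdot \frac{\vol{d-1}}{N(d,n)\vol{d-2}} = \frac{1}{\vol{d-2}}\sum_{n=0}^M a_n.
\]
By the absolute summability of $(a_n)$, the right-hand side stays bounded, so the partial sums form a Cauchy sequence in $\ltw$ and converge to a function $g \in \ltw$. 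By continuity of the inner product, the Legendre series coefficients of this limit $g$ are exactly $(b_n)$.

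Having secured $g \in \ltw$ with the desired coefficients, I would apply Lemma~\ref{lem_productIntegration} to the pair $(g,g)$ to obtain, for all $\eta,\zeta\in\sph_{d-1}$,
\[
\int_{\sph_{d-1}} g(\abrac{\eta,\xi})\, g(\abrac{\zeta,\xi}) \dd\sigma(\xi) = \vol{d-1} \sum_{n=0}^\infty \frac{b_n^2}{N(d,n)} P_n(\abrac{\eta,\zeta}) = \sum_{n=0}^\infty a_n P_n(\abrac{\eta,\zeta}).
\]
It remains to identify this series with $f(\abrac{\eta,\zeta})$. Since $(a_n)$ is absolutely summable and $\abs{P_n(t)}\leq 1$, Lemma~\ref{lem_CoeffSummable} gives that $\sum_n a_n P_n$ converges uniformly on $[-1,1]$, and the completeness of the Legendre basis ensures that the limit agrees with $f$ in $\ltw$; comparing these two convergences on $[-1,1]$ identifies $f$ pointwise with $\sum_n a_n P_n$, which simultaneously delivers the last assertion of the proposition.

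I do not expect a genuine obstacle in this argument: the proof is essentially a verification that the algebraically forced candidate $g$ lies in $\ltw$. The only point requiring a moment of care is confirming that the $\ltw$-limit of $\sum_{n=0}^M b_n P_n$ indeed admits $(b_n)$ as its Legendre coefficients in the sense of (\ref{legendre_orthogonal_decomposition}), so that Lemma~\ref{lem_productIntegration} really applies; this is immediate from orthogonality and the continuity of the weighted inner product.
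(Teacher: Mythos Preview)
Your proof is correct and follows essentially the same approach as the paper: both define $g$ via the coefficients $b_n = \sqrt{a_n N(d,n)/\vol{d-1}}$, verify $g\in\ltw$ by computing $\norm{g}_{\ltw}^2 = \vol{d-2}^{-1}\sum_n a_n$, and then invoke Lemma~\ref{lem_productIntegration} together with Lemma~\ref{lem_CoeffSummable}. Your write-up is in fact slightly more careful than the paper's in spelling out why the $\ltw$-limit inherits the coefficients $(b_n)$ and why the uniform limit coincides with $f$.
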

\begin{proof}
    Let $f(t) = \sum_{n=0}^\infty a_n P_n(t)$ be the series representation of $f$ w.r.t.\ the Legendre polynomials of dimension $d$. By assumption, the sequence $(a_n)$ is non-negative. Therefore, we may define
    \begin{align*}
        g(t) := \frac{1}{\sqrt{\vol{d-1}}}\sum_{n=0}^\infty \sqrt{a_n N(d,n)}  P_n(t).
    \end{align*}
    Note that it is possible that the series does not possess a pointwise limit. In any case, $g\in\ltw$, since we can use (\ref{eq_LegendreNorm}) to obtain that the square of its norm is given by $\vol{d-2}^{-1}\sum_{n=0}^\infty a_n < \infty$, using the summability assumption on the coefficients of $f$. 

    The desired representation can then be inferred by an application of Lemma~\ref{lem_productIntegration}, from which we obtain that
    \begin{align*}
        \int_{\sph_{d-1}} g(\abrac{\eta, \xi}) g(\abrac{\zeta,\xi}) \dd\sigma(\xi)
        = \sum_{n=0}^\infty a_n P_n(\abrac{\eta,\zeta})
        = f(\abrac{\eta,\zeta}).
    \end{align*}
    The fact that these equalities may be understood pointwise and w.r.t.\ uniform convergence is implied by Lemma \ref{lem_CoeffSummable} using the summability assumption.
\end{proof}

\subsection{Representation of the RKHS of SFBM}\label{sec_repOfRKHSofSFBM}

In this subsection, we want to apply the existence of the integral representation for positive semi-definite kernels to obtain a representation of the RKHS of SFBM. Let us first recall what a reproducing kernel Hilbert space is.

\begin{defi}
    Let $X$ be an arbitrary non-empty set and let $\mathcal{F}(X)$ be the set of real-valued functions on $X$. A Hilbert space $\hb \subseteq \mathcal{F}(X)$ is called a reproducing kernel Hilbert space (RKHS) with reproducing kernel $K:X\times X \to \R$ if $K$ satisfies the reproducing properties
    \begin{align*}
        K_x &:= K(x,.) \in \hb, &\text{for any $x\in X$}\\
        f(x) &= \abrac{f, K_x}_\hb, &\text{ for any $x\in X$ and any $f\in\hb$}.
    \end{align*}
\end{defi}

This definition is taken from \cite[Definition 1.1]{Saitoh2016}. Theorem 2.2 in the same book states that for any positive semi-definite kernel $K$ there exists a unique RKHS admitting the reproducing kernel $K$. This fact produces a natural one-to-one correspondence between RKHS and centred Gaussian processes (cf.\ Bochner's Theorem), since they are both uniquely defined by a positive semi-definite kernel.

Functions in the RKHS allow us to perform change-of-measure arguments (cf.\ Lemma~\ref{lemRKHSBound} below). To this end, we require a function in the RKHS with some specific properties. Therefore, we first describe the RKHS of SFBM in terms of an integral representation, which requires the next auxiliary result. The proof of that result can be inferred from \cite{Istas2006} directly or by an application of Proposition~\ref{prop_sqIntegralRep} using non-negativity and summability of the coefficients shown in the proof of \cite[Theorem~1 and Theorem~2]{Istas2006}. 
The bound on the coefficients given in \cite{Istas2006} is, however, not sufficient for our purposes later on and we require a deeper analysis.

\begin{lem}\label{lem_SFBMKernelLegendreDecomp}
    Let $0<H\leq1/2$ and $d\geq 2$ be fixed. There exist coefficients $a_n^{(H)}$ such that
    \begin{align}
        1-\left(\frac{\arccos(t)}{\pi}\right)^{2H} = \sum_{n=0}^\infty \frac{\anbr{H}^2}{N(d,n)} P_n(t),\qquad t\in[-1,1], \label{eq_LegendreSeriesArccos}
    \end{align}
    where $(P_n)$ are the Legendre polynomials of dimension $d$. The sum converges uniformly on $[-1,1]$ and in $\ltw$.
\end{lem}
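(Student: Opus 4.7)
My plan is to deduce Lemma~\ref{lem_SFBMKernelLegendreDecomp} from Proposition~\ref{prop_sqIntegralRep} by checking that the Legendre coefficients $(c_n)$ of $f(t):=1-(\arccos(t)/\pi)^{2H}$ are non-negative and absolutely summable. Once both properties are established, I would set $a_n^{(H)}:=\sqrt{c_n N(d,n)}$ so that $c_n=\anbr{H}^2/N(d,n)$, and the representation (\ref{eq_LegendreSeriesArccos}) with the asserted uniform and $\ltw$-convergence follows immediately from Lemma~\ref{lem_CoeffSummable} and the standard $\ltw$-expansion theorem. Note that $f$ is continuous on $[-1,1]$ with $|f|\leq 1$, hence $f\in\ltw$, so the coefficients (\ref{legendre_orthogonal_decomposition}) are well defined to begin with.

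The substantive step is non-negativity, for which I would invoke Schoenberg's classical theorem on the sphere: a continuous function $h\colon[-1,1]\to\R$ gives rise to a positive semi-definite kernel $(\eta,\zeta)\mapsto h(\abrac{\eta,\zeta})$ on $\S_{d-1}\times\S_{d-1}$ if and only if its Legendre coefficients are all non-negative. It therefore suffices to show that the isotropic kernel $K(\eta,\zeta):=f(\abrac{\eta,\zeta})=1-(d(\eta,\zeta)/\pi)^{2H}$ is positive semi-definite. For this I start from the existence of SFBM, which asserts that for every pole $\eta_0\in\S_{d-1}$ the kernel $\tfrac12\bigl(d(\eta,\eta_0)^{2H}+d(\zeta,\eta_0)^{2H}-d(\eta,\zeta)^{2H}\bigr)$ in (\ref{eqn:covspherical}) is positive semi-definite. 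Averaging $\eta_0$ against the normalised surface measure on $\S_{d-1}$ and exploiting rotational invariance then shows that
$A-\tfrac12 d(\eta,\zeta)^{2H}$
is positive semi-definite, where $A:=\vol{d-1}^{-1}\int_{\S_{d-1}} d(\eta,\eta_0)^{2H}\,\dd\sigma(\eta_0)$ is a constant independent of $\eta$. Writing the formal Legendre expansion $\arccos(t)^{2H}=\sum_n\tilde c_nP_n(t)$, Schoenberg applied to this averaged kernel forces $\tilde c_n\leq 0$ for all $n\geq 1$; a direct computation via Lemma~\ref{lem_simpleRadialFunctionIntegration} identifies $\tilde c_0=A\leq\pi^{2H}$. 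The coefficients of $f=1-\pi^{-2H}\arccos^{2H}$ are therefore $c_0=1-\pi^{-2H}\tilde c_0\geq 0$ and $c_n=-\pi^{-2H}\tilde c_n\geq 0$ for $n\geq 1$, as required.

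For absolute summability $\sum_n c_n<\infty$ I would rely on the explicit computation of the Legendre coefficients of $\arccos(t)^{2H}$ carried out in the proofs of Theorems~1 and 2 of \cite{Istas2006}, where the $\tilde c_n$ are written as ratios of gamma functions whose asymptotics via (\ref{eq_gammaQuotient}) yield $c_n=O(n^{-\beta})$ for some $\beta>1$, amply summable. (A sharper asymptotic, not needed here, will be developed for other purposes in Section~\ref{sec_polynomialApprox}.) Non-negativity and summability together put $f$ into the setting of Proposition~\ref{prop_sqIntegralRep} and Lemma~\ref{lem_CoeffSummable}, which deliver both the uniform and the $\ltw$ convergence claimed. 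The main obstacle is clearly the non-negativity step: intuitively it reflects conditional negative definiteness of $d(\cdot,\cdot)^{2H}$ that is already built into the existence of SFBM, but converting this intuition into a clean statement about Legendre coefficients requires Schoenberg's theorem combined with the averaging trick that transforms the anisotropic SFBM covariance into a genuinely isotropic kernel on $\S_{d-1}$.
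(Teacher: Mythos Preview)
Your proposal is correct and follows the same scaffold as the paper: verify that the Legendre coefficients of $f(t)=1-(\arccos(t)/\pi)^{2H}$ are non-negative and absolutely summable, then read off the representation and convergence from Lemma~\ref{lem_CoeffSummable} (the paper phrases this via Proposition~\ref{prop_sqIntegralRep}, whose last sentence is the relevant conclusion). For summability you, like the paper, defer to the explicit coefficient estimates in \cite{Istas2006}.

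The one genuine difference is the non-negativity step. The paper simply cites \cite[Theorems~1 and~2]{Istas2006}, whereas you supply a self-contained argument: average the SFBM covariance (\ref{eqn:covspherical}) over the pole $\eta_0$ to obtain the isotropic positive semi-definite kernel $A-\tfrac12 d(\eta,\zeta)^{2H}$, then invoke Schoenberg's characterisation of isotropic positive semi-definite kernels on $\S_{d-1}$ to force $\tilde c_n\le 0$ for $n\ge 1$. This is a clean and correct reduction; the averaging trick is exactly what turns conditional negative definiteness of $d(\cdot,\cdot)^{2H}$ (implicit in the existence of SFBM) into an honest positive-definiteness statement amenable to Schoenberg. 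The identification $\tilde c_0=A$ via Lemma~\ref{lem_simpleRadialFunctionIntegration} and the bound $A\le\pi^{2H}$ handle the $n=0$ case. Your route has the advantage of being essentially self-contained modulo Schoenberg's theorem, at the cost of importing that external result which the paper does not otherwise state; the paper's route is shorter but outsources both ingredients to \cite{Istas2006}.
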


The representation given in the next corollary could also be inferred from Istas' $L_2$ representation of SFBM (cf.\ \cite[Chapter 4.2.2]{Istas12Manifold} and \cite{Istas2006}). Since the proof is very short and because it demonstrates how we treat integrals also later on, we include it here.

\begin{cor}\label{cor_kernelIntegralRep}
    Let $0<H\leq 1/2$, let $\eta,\zeta,\xi\in\sph_{d-1}$ and define
    \begin{align*}
        g_{H}(\abrac{\eta,\zeta}) & := \frac{1}{\sqrt{\vol{d-1}}} \sum_{n=0}^\infty  \an{H} P_n(\abrac{\eta,\zeta}), \\
        m_\eta^{(H)}(\xi) & := \frac{\pi^{H}}{\sqrt{2}} \left[g_{H}(\abrac{\eta,\xi}) - g_{H}(\abrac{O,\xi})\right],
    \end{align*}
    where the coefficients $(a_n^{(H)})$ are given in Lemma~\ref{lem_SFBMKernelLegendreDecomp}. Then the covariance function $\expec{S_H(\eta) S_H(\zeta)}$ of SFBM possesses the integral representation
    \begin{align} \label{eqn:intrepr4}
        \expec{S_H(\eta) S_H(\zeta)} = \int_{\sph_{d-1}} m_\eta^{(H)}(\xi) \; m_\zeta^{(H)}(\xi) \dd\sigma(\xi).
    \end{align}
\end{cor}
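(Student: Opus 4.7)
The plan is to compute the right-hand side of \eqref{eqn:intrepr4} directly by expanding the product and applying Lemma~\ref{lem_productIntegration} term by term, using Lemma~\ref{lem_SFBMKernelLegendreDecomp} to identify each of the resulting Legendre series with the kernel $1-(\arccos(\cdot)/\pi)^{2H}$.

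First, I would observe that by Lemma~\ref{lem_SFBMKernelLegendreDecomp} the function $g_H$ defined in the statement is a well-defined element of $\ltw$: its Legendre coefficients are $a_n^{(H)}/\sqrt{\vol{d-1}}$, and their squares are summable against $1/N(d,n)$ since $\sum_n [a_n^{(H)}]^2/N(d,n) < \infty$ by \eqref{eq_LegendreSeriesArccos} evaluated at $t=1$ (recall $P_n(1)=1$). Applying Lemma~\ref{lem_productIntegration} with $f=g=g_H$ yields, for any $\eta,\zeta\in\sph_{d-1}$,
\begin{align*}
\int_{\sph_{d-1}} g_H(\abrac{\eta,\xi})\,g_H(\abrac{\zeta,\xi})\,\dd\sigma(\xi)
= \sum_{n=0}^\infty \frac{\anbr{H}^2}{N(d,n)} P_n(\abrac{\eta,\zeta})
= 1 - \left(\frac{\arccos(\abrac{\eta,\zeta})}{\pi}\right)^{2H},
\end{align*}
where the last equality is exactly \eqref{eq_LegendreSeriesArccos} and holds pointwise by the uniform convergence asserted in that lemma. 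Writing this in terms of geodesic distance gives the fundamental identity
\begin{align*}
\int_{\sph_{d-1}} g_H(\abrac{\eta,\xi})\,g_H(\abrac{\zeta,\xi})\,\dd\sigma(\xi) = 1 - \pi^{-2H} d(\eta,\zeta)^{2H}.
\end{align*}

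Next, I would expand the product defining the integrand in \eqref{eqn:intrepr4}:
\begin{align*}
m_\eta^{(H)}(\xi)\,m_\zeta^{(H)}(\xi) = \tfrac{\pi^{2H}}{2}\bigl[g_H(\abrac{\eta,\xi})g_H(\abrac{\zeta,\xi}) - g_H(\abrac{\eta,\xi})g_H(\abrac{O,\xi}) - g_H(\abrac{O,\xi})g_H(\abrac{\zeta,\xi}) + g_H(\abrac{O,\xi})^2\bigr].
\end{align*}
Integrating each of the four terms against $\sigma$ and applying the fundamental identity above (noting that $d(O,O)=0$ in the fourth term), I obtain
\begin{align*}
\int_{\sph_{d-1}} m_\eta^{(H)}(\xi)\,m_\zeta^{(H)}(\xi)\,\dd\sigma(\xi)
&= \tfrac{\pi^{2H}}{2}\Bigl[(1-\pi^{-2H}d(\eta,\zeta)^{2H}) - (1-\pi^{-2H}d(\eta,O)^{2H}) \\
&\qquad\qquad - (1-\pi^{-2H}d(O,\zeta)^{2H}) + 1\Bigr] \\
&= \tfrac{1}{2}\bigl(d(\eta,O)^{2H} + d(\zeta,O)^{2H} - d(\eta,\zeta)^{2H}\bigr),
\end{align*}
which equals $\expec{S_H(\eta) S_H(\zeta)}$ by \eqref{eqn:covspherical}. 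This completes the proof.

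There is no real obstacle here since Lemma~\ref{lem_SFBMKernelLegendreDecomp} does all the analytic heavy lifting; the only point to check carefully is that the manipulations on the Legendre series side are justified, but this is ensured by the uniform (and $\ltw$-) convergence stated in that lemma together with Lemma~\ref{lem_productIntegration}. The constant $\pi^H/\sqrt{2}$ in the definition of $m_\eta^{(H)}$ is precisely tailored so that the four integrals telescope into the SFBM covariance.
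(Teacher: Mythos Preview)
Your proof is correct and follows essentially the same approach as the paper's own proof: both compute $\int g_H(\abrac{\eta,\xi})g_H(\abrac{\zeta,\xi})\,\dd\sigma(\xi)$ via Lemma~\ref{lem_productIntegration} and Lemma~\ref{lem_SFBMKernelLegendreDecomp}, then expand the product $m_\eta^{(H)} m_\zeta^{(H)}$ into four such terms and telescope. The only cosmetic differences are that you add an explicit remark on why $g_H\in\ltw$, and the paper carries the factor $\pi^{2H}$ from the outset rather than at the end.
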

\begin{proof}
    Recall that $d(\eta,\zeta) = \arccos(\abrac{\eta,\zeta})$. Lemma~\ref{lem_SFBMKernelLegendreDecomp} and Lemma~\ref{lem_productIntegration} let us infer that
    \begin{align*}
        \int_{\sph_{d-1}} \pi^{2H} g_{H}(\abrac{\zeta,\xi}) g_{H}(\abrac{\eta,\xi}) \dd \sigma(\xi)
        & = \pi^{2H} \sum_{n=0}^\infty \frac{\anbr{H}^2}{N(d,n)} P_n(\abrac{\eta,\zeta})\\
        & = \pi^{2H} \left( 1 - \left(\frac{\arccos(\abrac{\eta,\zeta})}{\pi}\right)^{2H} \right)\\
        & = \pi^{2H} - d(\eta,\zeta)^{2H}.
    \end{align*}
    This implies that
    \begin{align*}
        & \int_{\sph_{d-1}} m_\eta^{(H)}(\xi) \; m_\zeta^{(H)}(\xi) \dd\sigma(\xi) \\
        & = \frac{1}{2} \int_{\sph_{d-1}} \pi^{2H} \left[g_{H}(\abrac{\eta,\xi}) - g_{H}(\abrac{O,\xi})\right] \left[g_{H}(\abrac{\zeta,\xi}) - g_{H}(\abrac{O,\xi})\right] \dd \sigma(\xi)\\
        & = \frac{1}{2}\left( \left[\pi^{2H} - d(\eta,\zeta)^{2H}\right] - \left[\pi^{2H} - d(\eta,O)^{2H}\right] - \left[\pi^{2H} - d(\zeta,O)^{2H}\right] + \left[\pi^{2H} - d(O,O)^{2H}\right]  \right)\\
        & = \expec{S_H(\eta) S_H(\zeta)}.\qedhere
    \end{align*}
\end{proof}

Given the representation of the covariance of SFBM in (\ref{eqn:intrepr4}), we can now give a representation of the RKHS of SFBM.

\begin{lem}\label{lemRKHSofSFBM}
    Let $m_\eta^{(H)}$ be defined as in Corollary~\ref{cor_kernelIntegralRep}. The RKHS of SFBM is given by
    \begin{align*}
        \hb := \left\{ h:\sph_{d-1} \mapsto \R \;\middle| \; h(\eta) = \int_{\sph_{d-1}} \ell(\xi) m_\eta^{(H)}(\xi) \dd \sigma(\xi),\; \ell\in L_2(\sph_{d-1}) \right\}.
    \end{align*}
    The norm on $\hb$ is bounded by
    \begin{align}\label{eq_RKHSNormBoundEll}
        \norm{h}_{\hb}^2 \leq \int_{\sph_{d-1}} \abs{\ell(\xi)}^2 \dd \sigma(\xi),
    \end{align}
    if $h$ can be written as the integral over $\ell$ like in the definition of $\hb$.
\end{lem}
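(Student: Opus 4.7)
The plan is to exploit Corollary~\ref{cor_kernelIntegralRep}, which exhibits $m_\eta^{(H)}$ as a \emph{feature map} for SFBM: $\E[S_H(\eta) S_H(\zeta)] = \langle m_\eta^{(H)}, m_\zeta^{(H)} \rangle_{L_2(\S_{d-1})}$. From this the RKHS can be read off by the standard quotient construction that attaches an RKHS to a feature map.

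Concretely, I would introduce the bounded linear map $T \colon L_2(\S_{d-1}) \to \mathcal{F}(\S_{d-1})$ defined by
\[
(T\ell)(\eta) := \int_{\S_{d-1}} \ell(\xi)\, m_\eta^{(H)}(\xi) \dd \sigma(\xi),
\]
which is well defined by Cauchy--Schwarz since $m_\eta^{(H)} \in L_2(\S_{d-1})$. By construction, the candidate space $\hb$ is precisely the range of $T$. Equipping $\hb$ with the quotient Hilbert structure
\[
\|h\|_\hb^2 := \inf\{ \|\ell\|_{L_2(\S_{d-1})}^2 : T\ell = h \}
\]
renders $\hb$ isometrically isomorphic to $L_2(\S_{d-1})/\ker T$. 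The infimum is attained at $P\ell$, where $P$ denotes orthogonal projection onto $(\ker T)^\perp$. Observing from the definition of $T$ that $\ker T = \{m_\eta^{(H)} : \eta \in \S_{d-1}\}^\perp$ yields
\[
(\ker T)^\perp = \overline{\operatorname{span}}\{ m_\eta^{(H)} : \eta \in \S_{d-1}\},
\]
so the norm bound (\ref{eq_RKHSNormBoundEll}) follows immediately from $\|h\|_\hb^2 = \|P\ell\|_{L_2}^2 \leq \|\ell\|_{L_2}^2$ for any preimage $\ell$ of $h$.

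To identify $\hb$ with the RKHS of SFBM, I would verify the reproducing property. Setting $K_\eta(\zeta) := \E[S_H(\eta) S_H(\zeta)]$, Corollary~\ref{cor_kernelIntegralRep} gives $K_\eta = T m_\eta^{(H)}$, so $K_\eta \in \hb$. Since $m_\eta^{(H)} \in (\ker T)^\perp$, we have $P m_\eta^{(H)} = m_\eta^{(H)}$; for any $h = T\ell \in \hb$ the quotient inner product gives
\[
\langle h, K_\eta \rangle_\hb = \langle P\ell, P m_\eta^{(H)} \rangle_{L_2} = \langle P\ell, m_\eta^{(H)} \rangle_{L_2} = \langle \ell, m_\eta^{(H)} \rangle_{L_2} = h(\eta),
\]
using self-adjointness of $P$ together with $P m_\eta^{(H)} = m_\eta^{(H)}$. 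By uniqueness of the RKHS associated with a positive semidefinite kernel (Theorem~2.2 of \cite{Saitoh2016}), this identifies $\hb$ as the RKHS of SFBM.

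The only nontrivial bookkeeping is verifying that the quotient construction is well defined on $\hb$ and that $(\ker T)^\perp$ equals the closed linear span of $\{m_\eta^{(H)} : \eta \in \S_{d-1}\}$; both are classical facts but deserve an explicit statement. Once this is in place, the set-level identification of $\hb$, the reproducing property, and the norm bound are all direct consequences of the feature-map identity $K_\eta = T m_\eta^{(H)}$.
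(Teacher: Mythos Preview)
Your proof is correct and follows essentially the same approach as the paper: both identify the RKHS as the range of the integral operator $\ell \mapsto \int \ell(\xi)\, m_\eta^{(H)}(\xi)\, \dd\sigma(\xi)$ equipped with the quotient (infimum) norm, from which the bound~(\ref{eq_RKHSNormBoundEll}) is immediate. The only difference is packaging: the paper invokes Theorem~4.1 and Example~4.5 of \cite{lifshits12lectures} to obtain this directly from the factorisation in Corollary~\ref{cor_kernelIntegralRep}, whereas you have unwound that result by hand (verifying the reproducing property via the projection onto $(\ker T)^\perp$ and appealing to uniqueness of the RKHS).
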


\begin{proof}
    The proof is a slight modification of \cite[Example 4.5]{lifshits12lectures}, which is an application of Theorem 4.1 in the same book. We only need a factorisation of our covariance function into an integral of type (\ref{eqn:intrepr4}), 
    which was shown in Corollary~\ref{cor_kernelIntegralRep}. We are then guaranteed that if $J$ is the operator defined by
    \begin{align*}
        (J \ell)(\eta) := \int_{\sph_{d-1}} \ell(\xi) m_\eta^{(H)}(\xi) \dd \sigma(\xi),
    \end{align*}
    which maps $L_2$-functions defined on the sphere to real functions on the sphere, then the RKHS is given by
    \begin{align*}
        J(L_2(\sph_{d-1})) = \left\{ h:\sph_{d-1} \mapsto \R \;\middle| \; h(\eta) = \int_{\sph_{d-1}} \ell(\xi) m_\eta^{(H)}(\xi) \dd \sigma(\xi),\; \ell\in L_2(\sph_{d-1}) \right\}.
    \end{align*}
    Furthermore, the norm (cf.\ \cite[Thm. 4.1]{lifshits12lectures}) is equal to
    \begin{align*}
        \norm{h}_{\hb}^2 = \inf_{\substack{J\ell = h \\ \ell \in L_2}} \; \int_{\sph_{d-1}} \abs{\ell(\xi)}^2 \dd\sigma(\xi). \qquad\qquad \qedhere
    \end{align*}
\end{proof}

\section{Polynomial approximations}\label{sec_polynomialApprox}
This entire section is self-contained. The goal is to derive precise first-order asymptotics for the Legendre coefficients of $\ltw$ functions that are smooth inside the domain and that possess mixed algebraic endpoint singularities. Our main theorem of this section, Theorem~\ref{thm_LegendreCoeffOrder}, partly generalises the results in \cite{Sidi09} to Legendre polynomials of dimension $d$. In Section~\ref{sec_existencePartFktRKHS} we apply this result to construct a specific function in the RKHS of SFBM.

\subsection{Tools from Approximation Theory}\label{sec_toolsFromApprox}

We first require a result about polynomial approximations, commonly referred to as one of Jackson's theorems (cf.\ \cite[Introduction]{Sidi09}). This particular theorem is a slight modification of \cite[Theorem 6 in Chapter 5, p.\ 75]{lorentz86}.

\begin{lem}\label{lem_hoelderContImpliesLegendreDecay}
    Let $f:[-1,1]\to\R$ be a continuous function with $k\geq 0$ continuous derivatives $(f^{(0)} := f), f^{(1)}, \ldots, f^{(k)}$ and where $f^{(k)}$ is $\alpha_f$-H\"older continuous wit $0<\alpha_f<1$, i.e.\ there is a constant $M$, such that
    \begin{align*}
        \abs{f^{(k)}(x) - f^{(k)}(y)} \leq M \abs{x-y}^{\alpha_f},\qquad x,y\in[-1,1].
    \end{align*}
    
    Then there is a sequence of polynomials $(Q_n)_{n\in\N}$ with $\deg Q_n \leq n$, for which
    \begin{align*}
        \sup_{x\in[-1,1]} \abs{f(x) - Q_n(x)} \leq c \; n^{-k-\alpha_f}
    \end{align*}
    for any $n=1,2, \ldots$ and some constant $c > 0$ which depends on $f$ and $k$.
\end{lem}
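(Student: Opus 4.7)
The plan is to deduce this from the classical Jackson theorem cited as the source, namely Theorem~6 in Chapter~5 of \cite{lorentz86}, by controlling the modulus of continuity of $f^{(k)}$ using the H\"older hypothesis. Recall that version states: for any continuous $g:[-1,1]\to\R$ with $k$ continuous derivatives, there exist polynomials $R_n$ of degree at most $n$ such that
\begin{align*}
\sup_{x\in[-1,1]} |g(x)-R_n(x)| \leq c\, n^{-k}\, \omega\bigl(g^{(k)}, 1/n\bigr),
\end{align*}
where $\omega(h,\delta):=\sup\{|h(x)-h(y)|:x,y\in[-1,1],\,|x-y|\leq\delta\}$ is the modulus of continuity and $c$ depends only on $k$.

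First, I would set $g=f$. By assumption, $f^{(k)}$ exists and is $\alpha_f$-H\"older continuous with constant $M$, so directly from the definition of $\omega$,
\begin{align*}
\omega\bigl(f^{(k)},\delta\bigr) = \sup_{|x-y|\leq\delta}|f^{(k)}(x)-f^{(k)}(y)| \leq M\,\delta^{\alpha_f}, \qquad \delta>0.
\end{align*}
In particular $\omega(f^{(k)},1/n)\leq M\, n^{-\alpha_f}$. Plugging into the Jackson estimate and setting $Q_n:=R_n$ yields
\begin{align*}
\sup_{x\in[-1,1]} |f(x)-Q_n(x)| \leq c\, n^{-k}\cdot M\, n^{-\alpha_f} = c'\, n^{-k-\alpha_f},
\end{align*}
with $c'=cM$ depending only on $f$ (through $M$) and on $k$, which is precisely the claim.

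The only subtlety is that the version in \cite{lorentz86} is stated for approximation on $[-1,1]$ with $k$ continuous derivatives, so the invocation is essentially verbatim; no change of variables or partition arguments are required. Thus no step is a genuine obstacle here: the statement is really a packaging of Jackson's theorem together with the trivial bound $\omega(f^{(k)},\delta)\lesssim \delta^{\alpha_f}$ provided by H\"older continuity, and the constant $c$ in the conclusion absorbs both $M$ and the universal Jackson constant.
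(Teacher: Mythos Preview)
Your argument is correct: the H\"older hypothesis gives $\omega(f^{(k)},1/n)\leq M n^{-\alpha_f}$, and substituting this into the Jackson estimate from \cite[Theorem~6, Chapter~5]{lorentz86} immediately yields the claim. The paper does not supply its own proof of this lemma but simply cites it as a slight modification of that theorem, so your derivation is exactly the intended specialisation.
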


We need a result that holds for any set of orthogonal polynomials (cf.\ \cite[Proposition 3.3]{frye12}), which we merely cite.

\begin{prop}\label{prop_orthogonalPolywrtSmallDeg}
    Let $(Q_n)$ be a set of orthogonal polynomials w.r.t.\ some weight function $w$ and let $\deg(Q_n) = n$. Then, for any polynomial $R$ with $\deg(R) < n$,
    \begin{align*}
        \int Q_n(x) R(x) w(x) \dd x = 0.
    \end{align*}
\end{prop}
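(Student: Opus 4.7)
The plan is to use the standard observation that the orthogonal family $Q_0, Q_1, \ldots, Q_{n-1}$ spans the whole space of polynomials of degree at most $n-1$, so that $R$ can be expanded in this basis and orthogonality of $Q_n$ against each basis element finishes the job.

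First, I would check that $\{Q_0, \ldots, Q_{n-1}\}$ is in fact a basis of the $n$-dimensional vector space of polynomials of degree $<n$. Since $\deg Q_k = k$ by hypothesis, the family consists of $n$ polynomials of pairwise distinct degrees. Linear independence follows from a one-line induction on the leading-coefficient level: if $\sum_{k=0}^{n-1} \alpha_k Q_k \equiv 0$, then the $x^{n-1}$-coefficient forces $\alpha_{n-1} = 0$, then $\alpha_{n-2} = 0$, and so on. A dimension count then gives the basis property.

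Second, given $R$ with $\deg R < n$, I would write $R = \sum_{k=0}^{n-1} c_k Q_k$ in this basis and use linearity to get
\begin{align*}
\int Q_n(x) R(x) w(x) \dd x = \sum_{k=0}^{n-1} c_k \int Q_n(x) Q_k(x) w(x) \dd x = 0,
\end{align*}
where each integral on the right vanishes by the defining orthogonality of the family $(Q_n)$, since $n \neq k$ for all $k \in \{0, \ldots, n-1\}$.

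There is no real obstacle here; the statement is essentially a reformulation of the definition of an orthogonal polynomial system together with an elementary basis argument. The only conventions worth making explicit in the proof are that the integration is over the support of $w$ (matching the domain on which orthogonality holds, e.g.\ $[-1,1]$ in the Gegenbauer setting of Section~\ref{sec_prelimPoly}) and that $w$ is tacitly assumed to render all the integrals involved finite, which is automatic for polynomial integrands under the weights considered in this paper.
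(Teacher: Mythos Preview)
Your proof is correct and is the standard argument. The paper does not actually prove this proposition but merely cites it from \cite[Proposition~3.3]{frye12}, so there is no alternative approach to compare against; your write-up would serve perfectly well as a self-contained replacement for the citation.
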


To investigate the Legendre series coefficients of a larger class of functions $f$, we first consider the simplest case $f^{\pm}(x):=(1\pm x)^{\gamma}$. The coefficients w.r.t.\ Gegenbauer polynomials for non-integer $\gamma$ were calculated in \cite[Lemma A.1]{wang16}. Our calculation is a slight extension, because we are able to consider all possible parameters (also cf.\ Remark \ref{rem_gradsteynProof}).
We state the result in terms of our notation and, most importantly, derive the asymptotic behaviour.

\begin{lem}\label{lem_LegendreCoeffFpm}
    Let $f^{\pm}:[-1,1]\to\R$ be the functions defined by $f^\pm(x) := (1 \pm x)^{\gamma}$ with some $\gamma > \frac{1-d}{2}$. Then we have the following properties:
    \begin{itemize}
        \item[(i)]
    For $\gamma \notin \N_0$ the Legendre series coefficients $a_{\gamma,n}^-$ of $f^-$ are given by
    \begin{align*}
        a_{\gamma,n}^- := N(d,n) ~ \frac{\Gamma(\frac{d}{2})}{\sqrt{\pi}} ~ \frac{2^{d-2+\gamma} ~ \Gamma(\gamma + \frac{d-1}{2})}{\Gamma(\gamma+d+n-1)} ~ \frac{\Gamma(n-\gamma)}{\Gamma(-\gamma)}.
    \end{align*}
    The Legendre series coefficients $a_{\gamma,n}^+$ for $f^+$ can be computed as $a_{\gamma,n}^+ := (-1)^n a_{\gamma,n}^-$. 
            \item[(ii)]
    The asymptotic behaviour is given by
    \begin{align*}
        a_{\gamma,n}^- 
        \begin{cases}
            \sim \frac{c}{\Gamma(-\gamma)} \; n^{-2\gamma-1}, & \text{ for $\gamma \notin \N_0$}, \\
            = 0, & \text{ for $\gamma \in \N_0$ and $n > \gamma$},
        \end{cases}
    \end{align*}
    for some constant $c>0$ depending on $\gamma$ and $d$.
\end{itemize}
\end{lem}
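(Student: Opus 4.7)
The plan is to compute the integral in (\ref{legendre_orthogonal_decomposition}) explicitly for $f = f^{-}$ by integrating Rodrigues' representation of $P_n$ by parts $n$ times, then deduce the asymptotics from the Gamma-quotient formula (\ref{eq_gammaQuotient}) and the estimate (\ref{lem_orderOfNdn}). The $f^{+}$ case and the polynomial case $\gamma \in \N_0$ are then obtained essentially for free.

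First I would combine (\ref{legendre_orthogonal_decomposition}) with the identity $P_n = C_n^{(\lambda)}/C_n^{(\lambda)}(1)$ (where $\lambda = d/2 - 1$) and use Rodrigues' formula (\ref{eq_RodriguesGegenbauerDef}) to rewrite
\begin{align*}
P_n(t)\,(1-t^2)^{(d-3)/2} = \frac{(-1)^n}{2^n n!}\,\frac{\Gamma(\lambda+\tfrac12)\Gamma(2\lambda)}{\Gamma(n+\lambda+\tfrac12)}\, \frac{\partial^n}{\partial t^n}\!\left[(1-t^2)^{n+\lambda-1/2}\right].
\end{align*}
Thus the integral $I_n := \int_{-1}^1 (1-t)^{\gamma} P_n(t)(1-t^2)^{(d-3)/2}\,dt$ reduces to $\int_{-1}^1 (1-t)^\gamma \partial_t^n[(1-t^2)^{n+\lambda-1/2}]\,dt$ up to an explicit prefactor, and this I would evaluate by integrating by parts $n$ times. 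The key routine calculation after the IBP is a Beta integral: substituting $t = 1-2u$ yields
\begin{align*}
\int_{-1}^1 (1-t)^{\gamma+\lambda-1/2}(1+t)^{n+\lambda-1/2}\,dt = 2^{\gamma+2\lambda+n}\,\frac{\Gamma(\gamma+\lambda+\tfrac12)\Gamma(n+\lambda+\tfrac12)}{\Gamma(\gamma+n+2\lambda+1)}.
\end{align*}
Collecting the constants (with $|\sph_{d-2}|/|\sph_{d-1}| = \Gamma(d/2)/(\sqrt\pi\,\Gamma((d-1)/2))$) and eliminating the $(-1)^n$ factor via the Pochhammer identity $\Gamma(\gamma-n+1) = (-1)^n \Gamma(\gamma+1)\Gamma(-\gamma)/\Gamma(n-\gamma)$ (valid for $\gamma \notin \N_0$) produces the stated formula in (i).

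The symmetry $a_{\gamma,n}^{+} = (-1)^n a_{\gamma,n}^{-}$ follows from the substitution $t \mapsto -t$ together with the parity $P_n(-t) = (-1)^n P_n(t)$ (stated just after (\ref{eq_LegendreNorm})). For part (ii), the case $\gamma \in \N_0$ is immediate: $f^{-}$ is a polynomial of degree $\gamma$, so Proposition~\ref{prop_orthogonalPolywrtSmallDeg} yields $a_{\gamma,n}^- = 0$ for $n > \gamma$. For $\gamma \notin \N_0$, two applications of (\ref{eq_gammaQuotient}) give
\begin{align*}
\frac{\Gamma(n-\gamma)}{\Gamma(\gamma+n+d-1)} \sim c\, n^{-2\gamma-d+1},
\end{align*}
and combining with $N(d,n) \sim c\, n^{d-2}$ from (\ref{lem_orderOfNdn}) delivers $a_{\gamma,n}^{-} \sim c/\Gamma(-\gamma)\cdot n^{-2\gamma-1}$.

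The main technical obstacle is the boundary-term bookkeeping in the $n$-fold integration by parts. After $k$ steps, the boundary contributions have the form $[(1-t)^{\gamma-j}\,\partial_t^{n-1-j}(1-t^2)^{n+\lambda-1/2}]_{-1}^{1}$ for $j=0,\ldots,k-1$. At $t=-1$, the second factor vanishes to order $\lambda+\tfrac12+j = (d-1)/2 + j > 0$ since $d \geq 2$, so these terms are zero. At $t=1$, the product behaves like $(1-t)^{\gamma+\lambda+1/2} = (1-t)^{\gamma+(d-1)/2}$, and this vanishes precisely under the standing hypothesis $\gamma > (1-d)/2$. The other place where care is needed is the disappearance of the factor $(-1)^n$ after the Pochhammer rewrite: the $n$ IBPs contribute $(-1)^n$, the $n$-th derivative of $(1-t)^\gamma$ contributes another $(-1)^n$, and the Pochhammer identity produces a third $(-1)^n$, so all three cancel into a single positive sign, matching the stated formula.
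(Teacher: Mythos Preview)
Your approach is correct and genuinely different from the paper's. The paper does not integrate by parts: it instead quotes a tabulated Gegenbauer integral (Gradshteyn--Ryzhik~7.311(3)) for $\int_{-1}^1 (1-x)^{\nu-1/2}(1+x)^\beta C_n^{(\nu)}(x)\,dx$, then has to do a careful continuous-extension argument in the parameter $\nu$ to reach $d=2$ (where $\lambda=0$ and $\Gamma(2\lambda)$ blows up) and in $\gamma$ to reach the excluded integer values; only afterwards does it apply Euler's reflection to rewrite $\Gamma(\gamma+1)/\Gamma(\gamma-n+1)$ as $(-1)^n\Gamma(n-\gamma)/\Gamma(-\gamma)$. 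Your Rodrigues-plus-IBP route is more self-contained and, once the prefactor is written correctly, sidesteps the $d=2$ continuous-extension entirely because the $\Gamma(2\lambda)$ factors cancel in passing from $C_n^{(\lambda)}$ to $P_n$. What the paper's route buys is that the hard integral is outsourced to a reference; what yours buys is that one sees directly why the boundary terms vanish under the hypothesis $\gamma>(1-d)/2$ and why the formula extends cleanly across $\lambda=0$.

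Two small corrections to your write-up. First, your displayed prefactor for $P_n(t)(1-t^2)^{(d-3)/2}$ is off: after dividing Rodrigues' formula by $C_n^{(\lambda)}(1)=\Gamma(n+2\lambda)/(\Gamma(2\lambda)\,n!)$, the $n!$ and $\Gamma(2\lambda)$ both cancel, leaving
\[
P_n(t)\,(1-t^2)^{\lambda-1/2}=\frac{(-1)^n}{2^n}\,\frac{\Gamma(\lambda+\tfrac12)}{\Gamma(n+\lambda+\tfrac12)}\,\partial_t^n\bigl[(1-t^2)^{n+\lambda-1/2}\bigr],
\]
with no stray $\Gamma(2\lambda)/n!$; this is exactly what makes the $d=2$ case go through without extension. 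Second, your sign count misses one source: there are \emph{four} factors of $(-1)^n$ --- from Rodrigues, from the $n$ integrations by parts, from $\partial_t^n(1-t)^\gamma$, and from the reflection identity --- and they cancel in pairs to give the stated positive formula for $a_{\gamma,n}^{-}$.
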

\begin{rem}\label{rem_gradsteynProof}
\begin{itemize}
\item[(1)] From the proof of Lemma~\ref{lem_LegendreCoeffFpm} one may obtain the values of $a_{\gamma,n}^-$ (and, by symmetry, $a_{\gamma,n}^+$) for $\gamma\in\N_0$ through substituting the quotient $\frac{\Gamma(n-\gamma)}{\Gamma(-\gamma)}$ by $0$ if $\gamma \leq n-1$ and by $(-1)^n \frac{\Gamma(\gamma+1)}{\Gamma(\gamma-n+1)}$ if $\gamma \geq n$.
\item[(2)] The assumption $\gamma > \frac{1-d}{2}$ is necessary. Otherwise, $f^\pm$ are not $L_2^w$-functions and a Legendre decomposition does not exist. 
\end{itemize}
\end{rem}
\begin{proof}[Proof of Lemma~\ref{lem_LegendreCoeffFpm}.]
    We start with the calculation of the Legendre coefficients of $f^+$. By (\ref{legendre_orthogonal_decomposition}), we need to calculate
    \begin{align*}
        a_{\gamma,n}^+ = \frac{N(d,n) \vol{d-2}}{\vol{d-1}} \int_{-1}^1 (1-x)^{\frac{d-3}{2}} (1+x)^{\gamma+\frac{d-3}{2}} P_n(x) \dd x.
    \end{align*}
    Using $|P_n(t)|\leq 1$ one can show that the above integral exists for $\gamma > \frac{1-d}{2}$. 
    Since the generalised Legendre polynomials are re-scaled Gegenbauer polynomials, we are going to apply \cite[Eqn.\ 7.311(3)]{GradsteynRyzhik07}, from which we can infer that
    \begin{align}\label{eq_gradsteynFormulaPlain}
        \int_{-1}^1 (1-x)^{\nu-\eh} (1+x)^\beta C^{(\nu)}_n(x) \dd x = \frac{2^{\beta+\nu+\eh} \Gamma(\beta+1) \Gamma(\nu+\eh) \Gamma(2\nu+n)\Gamma(\beta-\nu+\frac{3}{2})}{n! ~ \Gamma(2\nu)\Gamma(\beta-\nu-n+\frac{3}{2}) \Gamma(\beta+\nu+n+\frac{3}{2})}
    \end{align}
    holds for $0\neq \nu > -\eh$ and $\beta > -1$ with $\beta-\nu+\frac{3}{2}\notin\Z_{\leq n}$. We proceed in several steps. In Step 1, we re-scale (\ref{eq_gradsteynFormulaPlain}) to fit our setting and show that one may apply continuous extension to obtain all required values of the integral. In Step 2, we calculate the continuous extension. In Step 3, we evaluate the quotient $\vol{d-2}/\vol{d-1}$ and calculate $a_{\gamma,n}^-$ from $a_{\gamma,n}^+$. 

    The calculation of the asymptotic behaviour requires (\ref{lem_orderOfNdn}) and is then an exercise in the repeated application of (\ref{eq_gammaQuotient}). The term $\Gamma(-\gamma)$ is the only factor (except for $(-1)^n)$) that determines the sign of the coefficient for $n$ large enough.

    \textit{Step 1: Continuous extension.} From Rodrigues' formula (\ref{eq_RodriguesGegenbauerDef}) for the definition of Gegenbauer polynomials one can infer that
    \begin{align}\label{eq_gegenbauerre-scaledContInParam}
        \frac{\Gamma(2\nu)~n!}{\Gamma(n+2\nu)} C_n^{(\nu)}(x)
    \end{align}
    is continuous in $0 \neq \nu > -\eh$ for any $x\in [-1,1]$. The continuity in $\pm 1$ follows from the re-scaling, since the term in (\ref{eq_gegenbauerre-scaledContInParam}) is constant $1$ for $x=1$ and $(-1)^n$ for $x=-1$. Due to the continuity of Rodrigues' formula in its parameter, we can infer that the generalised Legendre polynomials $(P_n^{(d)})$, where we stress the dependence on the dimension $d\geq 2$, are precisely given by
    \begin{align*}
        P_n^{(d)}(x) = \lim_{\substack{\nu'\to \frac{d}{2}-1 \\ \nu'\neq 0}} \frac{\Gamma(2\nu')\Gamma(n+1)}{\Gamma(n+2\nu')} C_n^{(\nu')}(x)
    \end{align*}
    even for $d=2$. Since $|P_n^{(d)}(x)|\leq 1$, we can choose any $d \geq 2$ and any sequence $\nu_k \to \frac{d}{2}-1$ with $\nu_k\neq 0$ and we know that the supremum
    \begin{align*}
        \sup_{k\in\N} \sup_{x\in [-1,1]} \frac{\Gamma(2\nu_k) n!}{\Gamma(n+2\nu_k)} \abs{C_n^{(\nu_k)}(x)} < \infty
    \end{align*}
    exists. This allows us to calculate the integral
    \begin{align*}
        \int_{-1}^1 (1-x)^{\frac{d-3}{2}} (1+x)^\beta P_n^{(d)}(x) \dd x
        = \lim_{\nu'\to \frac{d}{2}-1} \int_{-1}^1 (1-x)^{\nu'-\eh} (1+x)^\beta \frac{\Gamma(2\nu') n!}{\Gamma(n+2\nu')} C^{(\nu')}_n(x) \dd x
    \end{align*}
    through continuous extension. Therefore, by also substituting $\beta := \gamma + \frac{d-3}{2}$ we obtain the formula
    \begin{align}\label{eq_gradsteynToPnApplied}
        \int_{-1}^1 (1-x^2)^{\frac{d-3}{2}} (1+x)^\gamma P_n(x) \dd x
        = \frac{2^{d-2+\gamma} \Gamma(\gamma + \frac{d-1}{2}) \Gamma(\frac{d-1}{2}) }{ \Gamma(\gamma+d+n-1)} \frac{\Gamma(\gamma+1)}{\Gamma(\gamma-n+1)}
    \end{align}
    valid for any $\gamma > \frac{d-1}{2}$ with $\gamma \notin \Z_{\leq n-1}$. The latter restriction can be dealt with by noticing that for any convergent sequence $\gamma_k'\to \gamma > \frac{1-d}{2}$, where $\gamma_k'\notin\Z_{\leq n-1}$, the integrand on the LHS of (\ref{eq_gradsteynToPnApplied}) can be majorised, as well. Therefore, we also may apply continuous extension on the RHS in $\gamma$. The only term that is not defined for limits $\gamma\in\Z_{\leq n-1}$ is the quotient $\frac{\Gamma(\gamma+1)}{\Gamma(\gamma-n+1)}$, which we take care of in the next step.

    \textit{Step 2: Calculating the extension.} We proceed to calculate the limit $\lim_{\gamma'\to\gamma} \frac{\Gamma(\gamma'+1)}{\Gamma(\gamma'-n+1)}$ that comes up when calculating the extension on the RHS of (\ref{eq_gradsteynToPnApplied}). For integer $\gamma\geq n$, the limit is defined as it is. The limit is easily seen to be zero for $\gamma \in \{0,\ldots n-1\}$. These facts are used to infer the statement in Remark \ref{rem_gradsteynProof} (1). We are now going to derive a formula that works for all $\gamma \in\R\setminus\N_0$ with $\gamma > \frac{1-d}{2}$. Euler's reflection formula (cf.\ \cite[\href{https://dlmf.nist.gov/5.5.E3}{Equation 5.5.3}]{DLMF}) is given by $\Gamma(z)\Gamma(1-z) = \frac{\pi}{\sin(\pi z)}$, for $z\notin\Z$. Applying it twice lets us infer that
    \begin{align*}
        \Gamma(\gamma+1-n)\Gamma(n-\gamma) = \frac{\pi}{\sin(\pi (\gamma+1-n))} = (-1)^n \frac{\pi}{\sin(\pi(\gamma+1))} = (-1)^n \Gamma(1+\gamma) \Gamma(-\gamma)
    \end{align*}
    for $\gamma\notin\Z$. A rearrangement yields that for any $\gamma \notin \N_0$,
     \begin{align}\label{eq_gammaQuotientCalc}
        \lim_{\gamma'\to\gamma} \frac{\Gamma(\gamma'+1)}{\Gamma(\gamma'-n+1)} = (-1)^n \frac{\Gamma(n-\gamma)}{\Gamma(-\gamma)}.
    \end{align}

    \textit{Step 3: Final calculations.} To obtain the Legendre coefficients of $f^+$, we have yet to scale (\ref{eq_gradsteynToPnApplied}) by the normalising factor $N(d,n)\vol{d-2}/\vol{d-1}$. The spherical surface area $\vol{d-1}$ is well known (cf.\ \cite[Eq. 4.633]{GradsteynRyzhik07}) to be equal to
        $\vol{d-1} =  2 \pi^{\frac{d}{2}}/\Gamma\left(\frac{d}{2}\right)$.
    We can therefore calculate the quotient
        $\vol{d-2}/\vol{d-1} =  \Gamma(\frac{d}{2})/(\sqrt{\pi}\Gamma(\frac{d-1}{2}))$.
    With the $\Gamma(\frac{d-1}{2})$ term in this quotient and in (\ref{eq_gradsteynToPnApplied}) cancelling out, we have thus calculated $a_{\gamma,n}^+$ to be equal to the value given in the lemma. To calculate $a_{\gamma,n}^-$ note that $P_n(-x) = (-1)^n P_n(x)$. The substitution $x\mapsto -x$ yields that
    \begin{align*}
        \int_{-1}^1 (1-x^2)^{\frac{d-3}{2}} (1+x)^\gamma P_n(x) \dd x
        = (-1)^n \int_{-1}^1 (1-x^2)^{\frac{d-3}{2}} (1-x)^\gamma P_n(x) \dd x,
    \end{align*}
    which explains the identity $a_{\gamma,n}^+ = (-1)^n a_{\gamma,n}^-$.
\end{proof}

\subsection{Main approximation result}\label{sec_mainApproxRes}

We would like to generalise our analysis of the Legendre coefficients to a larger class of functions. The assumptions below might seem very restrictive, but they are satisfied in many interesting cases (cf.\ \cite[Sec. 2]{Sidi09}). Briefly summarised, our main approximation theorem (Theorem~\ref{thm_LegendreCoeffOrder}) states that the asymptotic decay of the Legendre coefficients of a function that is smooth inside the interval can be determined by the behaviour around the endpoints. If $f$ is not smooth at one of the endpoints $\pm 1$ the term ``algebraic / logarithmic endpoint singularity'' is used in the literature for a classification of the problem (cf.\ \cite{Sidi09}). Due to our application, we only consider (mixed) algebraic endpoint singularities.

\begin{ass}\label{assumptionLegendreAsymp}
    Let $f:(-1,1) \to\R$ be a function, such that
    \begin{enumerate}
        \item \label{ass_cInf} $f$ is in $C^\infty(-1,1)$.
        \item \label{ass_ab} There exist real sequences $(A_i)$, $(B_i)$ and $(\alpha_i)$, $(\beta_i)$ that satisfy
        \begin{align*}
            \frac{1-d}{2} &< \alpha_0 < \alpha_1 < \alpha_2 < \ldots  & \text{ with } \lim_{i\to\infty} \alpha_i = \infty, \\
            \frac{1-d}{2} &< \beta_0 < \beta_1 < \beta_2 < \ldots & \text{ with } \lim_{i\to\infty} \beta_i = \infty,
        \end{align*}
     such that we have the asymptotic expansions
        \begin{align*}
            f(t) & = \sum_{i=0}^\infty A_i (1+t)^{\alpha_i} & \text{ for } t \searrow -1, \\
            f(t) & = \sum_{i=0}^\infty B_i (1-t)^{\beta_i} & \text{ for } t \nearrow +1,
        \end{align*}
        in the sense of Remark \ref{rem_asympExpDef}. Moreover, if $A_j = 0$ for some $j\geq 0$, then we require that $A_i=0$ for all $i\geq j$; and anlogously for the sequence $(B_i)$.
        \item \label{ass_term-wiseDiff} The $k$-th derivative of $f$ near the singularities can be approximated by the term-wise $k$-th derivative of the series expansions in the sense of Remark \ref{rem_asympExpDef}, i.e.
        \begin{align*}
            f^{(k)}(t) & = \sum_{i=0}^\infty A_i \; \frac{\partial^k}{\partial t^k} (1+t)^{\alpha_i} & \text{ for } t \searrow -1, \\
            f^{(k)}(t) & = \sum_{i=0}^\infty B_i \; \frac{\partial^k}{\partial t^k} (1-t)^{\beta_i} & \text{ for } t \nearrow +1.
        \end{align*}
    \end{enumerate}
\end{ass}

\begin{rem}\label{rem_asympExpDef}
    The limit behaviour in Assumption~\ref{assumptionLegendreAsymp} \ref{ass_ab} is to be understood in the sense that
    \begin{align*}
        f(t) &= \sum_{i=0}^m A_i (1+t)^{\alpha_i} + \OO\left((1+t)^{\alpha_{m+1}}\right),
    \end{align*}
    for any $m\geq 0$ and $t\searrow -1$; and analogously for $t\nearrow +1$.
\end{rem}

We are now ready to state the main theorem to obtain the asymptotic behaviour of Legendre coefficients of a function with mixed algebraic end point singularities. We generalise the statement of \cite[Theorem 2.2]{Sidi09}  to a broader class of polynomials, as \cite{Sidi09} only considers classical Legendre polynomials. We note however that our conclusion is weaker as we are only concerned with the first order terms. One could likely generalise the following statement to the analysis of Jacobi polynomial coefficients and obtain the full asymptotics analogously to \cite{Sidi09}.

\begin{thm}\label{thm_LegendreCoeffOrder}
    Let $f:(-1,1) \to\R$ be a function that satisfies Assumption~\ref{assumptionLegendreAsymp}. Define 
    \begin{align*}
        \gamma_\alpha &:= \min\{\alpha_i :~ i\geq 0,~ \alpha_i \notin \N_0,~ A_i \neq 0\},\\
        \gamma_\beta &:= \min\{\beta_i :~ i\geq 0,~ \beta_i \notin \N_0,~ B_i \neq 0\},\\
        \gamma &:= \min\{\gamma_\alpha, \gamma_\beta\},
    \end{align*}
    and choose the value $\infty$ whenever the minimum is taken over the empty set. Let $(b_n)$ be the Legendre coefficients of the function $f$.
    \begin{enumerate}
        \item[(i)] 
    If $\gamma = \infty$ then $(b_n)$ decays faster than any polynomial.
        \item[(ii)]
            If $\gamma_\alpha \neq \gamma_\beta$ then for some constant $c>0$ depending on $f$ and $d$,
    \begin{align*}
        \abs{b_n} \sim c \; n^{-2\gamma- 1}.
    \end{align*}
    \item[(iii)]
            If $\gamma_\alpha = \gamma_\beta < \infty$ then let $j_\alpha$ and $j_\beta$, respectively, be the indices at which $\alpha_{j_\alpha}=\gamma=\beta_{j_\beta}$. Then
    \begin{align*}
        \abs{b_{n_k}} &\in o\left(n_k^{-2\gamma - 1}\right) & \text{ as $k\to\infty$ for any subsequence $(n_k)$ with $(-1)^{n_k} A_{j_\alpha} + B_{j_\beta} = 0$},\\
        \abs{b_{n_k}} &\sim c\; n_k^{-2\gamma - 1} & \text{ as $k\to\infty$ for any subsequence $(n_k)$ with $(-1)^{n_k} A_{j_\alpha} + B_{j_\beta} \neq 0$.}
    \end{align*}
    The constant $c>0$ may depend on $f$ and $d$.
    \end{enumerate}
\end{thm}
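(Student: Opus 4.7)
The plan is to split $f$ into a finite sum of the explicit building blocks $(1\pm t)^{\gamma}$ analysed in Lemma~\ref{lem_LegendreCoeffFpm}, plus a smoother remainder whose Legendre coefficients are negligible. Fix a large cutoff $M>0$ and write
\begin{align*}
  f(t) \;=\; \sum_{\alpha_i\leq M} A_i(1+t)^{\alpha_i} \;+\; \sum_{\beta_j\leq M} B_j(1-t)^{\beta_j} \;+\; r_M(t).
\end{align*}
By the term-wise differentiability condition in Assumption~\ref{assumptionLegendreAsymp}, interpreted in the sense of Remark~\ref{rem_asympExpDef}, the remainder $r_M$ has $k$ bounded derivatives near $\pm 1$ whenever $k<\min(\alpha_{i^*},\beta_{j^*})$, where $\alpha_{i^*},\beta_{j^*}$ denote the first exponents beyond $M$. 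Combined with $f\in C^\infty(-1,1)$, this extends $r_M$ to a $C^k$ function on the closed interval $[-1,1]$, with $k$ arbitrarily large once $M$ is chosen large.

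Next I would estimate the Legendre coefficient $b_n^{(r_M)}$ of $r_M$ via the formula (\ref{legendre_orthogonal_decomposition}). Proposition~\ref{prop_orthogonalPolywrtSmallDeg} allows subtracting any polynomial $Q_{n-1}$ of degree at most $n-1$ from $r_M$ inside the integrand without changing the value of the integral. Taking $Q_{n-1}$ to be the best uniform approximation supplied by Lemma~\ref{lem_hoelderContImpliesLegendreDecay}, and using $|P_n|\leq 1$ together with $N(d,n)\sim n^{d-2}$ from (\ref{lem_orderOfNdn}), one obtains
\begin{align*}
  |b_n^{(r_M)}| \;\leq\; c\, n^{d-2-k-\alpha}, \qquad \alpha\in(0,1),
\end{align*}
which beats $n^{-2\gamma-1}$ once $k$, and hence $M$, is sufficiently large. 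For the singular summands, Lemma~\ref{lem_LegendreCoeffFpm} supplies the exact Legendre coefficients $A_i a_{\alpha_i,n}^+=A_i(-1)^n a_{\alpha_i,n}^-$ and $B_j a_{\beta_j,n}^-$, with precise order $n^{-2\alpha_i-1}$ (resp.\ $n^{-2\beta_j-1}$) for non-integer exponents, and with vanishing contribution for all but finitely many $n$ when the exponent is a non-negative integer.

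Assembling the pieces, $b_n$ equals a finite sum of terms of order $n^{-2\alpha_i-1}$ or $n^{-2\beta_j-1}$ over the non-integer exponents $\alpha_i,\beta_j\leq M$, plus a negligible remainder, so the problem reduces to comparing the smallest such exponent, namely $\gamma$. Case~(i) corresponds to $\gamma=\infty$, in which no singular terms survive and $|b_n|$ decays faster than any polynomial. In case~(ii) only one of the endpoint contributions is of leading order, yielding $|b_n|\sim c\,n^{-2\gamma-1}$. In case~(iii) both endpoints contribute at the same order and combine as
\begin{align*}
  b_n \;=\; a_{\gamma,n}^-\,\bigl[(-1)^n A_{j_\alpha}+B_{j_\beta}\bigr] \;+\; o\!\left(n^{-2\gamma-1}\right),
\end{align*}
from which the stated subsequence dichotomy follows at once from the precise decay of $|a_{\gamma,n}^-|$.

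The main technical obstacle is the first step: combining the pointwise endpoint expansions with the interior smoothness of $f$ into a genuinely $C^k$ function on the \emph{closed} interval $[-1,1]$, whose $k$-th derivative is (at least) Lipschitz, so that the Jackson-type bound of Lemma~\ref{lem_hoelderContImpliesLegendreDecay} applies uniformly on $[-1,1]$. Once this is done, the rest is bookkeeping: choose $M$ large enough that $k>d-1+2\gamma$, at which point the Legendre coefficients of the remainder are swamped by every surviving singular contribution.
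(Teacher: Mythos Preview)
Your proposal is correct and follows essentially the same route as the paper: subtract finitely many explicit $(1\pm t)^{\gamma}$ terms, show the remainder lies in $C^{\gamma'}[-1,1]$ for arbitrarily large $\gamma'$ via Assumption~\ref{assumptionLegendreAsymp}\ref{ass_term-wiseDiff}, bound its Legendre coefficients by Jackson plus orthogonality (Proposition~\ref{prop_orthogonalPolywrtSmallDeg} and Lemma~\ref{lem_hoelderContImpliesLegendreDecay}), and read off the leading asymptotics from Lemma~\ref{lem_LegendreCoeffFpm}. The only difference is that the paper estimates the remainder integral via Cauchy--Schwarz against $\|P_n\|_{L^2_w}^2$, picking up a factor $N(d,n)^{1/2}\sim n^{(d-2)/2}$ rather than your $N(d,n)\sim n^{d-2}$ from the pointwise bound $|P_n|\leq 1$; your cruder estimate still suffices since the cutoff can be taken arbitrarily large.
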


\begin{proof}   
    To simplify notation, let us define that a function is in $C^{\gamma'}$ for $\gamma'>0$, if the function is in $C^{\lfloor \gamma' \rfloor}$, and if $\gamma'$ is not an integer, then its $\lfloor \gamma' \rfloor$-th derivative is $\gamma' - \lfloor \gamma' \rfloor$ H\"older continuous.
    
    Let us first deal with the case in which $\gamma < \infty$. Choose $\gamma' > 0$ such that $2\gamma + d/2 < \gamma'$.  Choose $m_{\gamma'}\in\N$ such that $\gamma' < \min\{\alpha_{m_{\gamma'}}, \beta_{m_{\gamma'}}\}$. Such an index exists, because the sequences $(\alpha_i)$ and $(\beta_i)$ converge to infinity due to Assumption~\ref{assumptionLegendreAsymp} \ref{ass_ab}. In this case, the function $g:[-1,1]\to\R$ defined by
    \begin{align*}
        g(t) := f(t) - \sum_{i=0}^{m_{\gamma'}} A_i (1+t)^{\alpha_i} - \sum_{i=0}^{m_{\gamma'}} B_i (1-t)^{\beta_i}
    \end{align*}
    is in $C^{\gamma'}[-1,1]$: Indeed, let $k:=\lfloor \gamma' \rfloor$. Then $g$ is $k$-times continuously differentiable, since clearly $g\in C^{\infty}(-1,1)$ and since
    \begin{align*}
        g^{(k)}(t) & = f^{(k)}(t) - \sum_{i=0}^{m_{\gamma'}} A_i \frac{\partial^k}{\partial t^k}(1+t)^{\alpha_i} - \sum_{i=0}^{m_{\gamma'}} B_i \frac{\partial^k}{\partial t^k}(1-t)^{\beta_i} & \\
        & \in o\left(\frac{\partial^k}{\partial t^k}(1+t)^{\alpha_{m_{\gamma'}}}\right)
         \subseteq o \left(\frac{\partial^k}{\partial t^k}(1+t)^{\gamma'}\right), & \text{ as $t\searrow -1$ } ,
    \end{align*}
    and analogously for $t\nearrow +1$. The required H\"older continuity of $g^{(k)}$ can be inferred from this calculation, as well. We can now split the Legendre coefficients $(b_n)$ of $f$ into known terms and a remainder term, i.e.
    \begin{align*}
        b_n = \sum_{i=0}^{m_{\gamma'}} A_i \; a_{\alpha_i,n}^+ + \sum_{i=0}^{m_{\gamma'}} B_i \; a_{\beta_i,n}^- + \frac{N(d,n)\vol{d-2}}{\vol{d-1}} \int_{-1}^1 g(t) P_n(t) w(t) \dd t,
    \end{align*}
    using notation from Lemma~\ref{lem_LegendreCoeffFpm}. To calculate the remainder, let $R_{n-1}$ be some polynomial of degree less than or equal to $n-1$. By Lemma~\ref{prop_orthogonalPolywrtSmallDeg},
    \begin{align*}
        \int_{-1}^1 g(t) P_n(t) w(t) \dd t
        & = \int_{-1}^1 \left[g(t)-R_{n-1}(t)\right] P_n(t) w(t) \dd t.
    \end{align*}
    An application of the Cauchy-Schwarz inequality in $L_2^w[-1,1]$ yields
    \begin{align}
        \left(\int_{-1}^1 g(t) P_n(t) w(t) \dd t\right)^2
        & = \left( \int_{-1}^1 \left[g(t)-R_{n-1}(t)\right] P_n(t) w(t) \dd t \right)^2 \notag \\
        & \leq \int_{-1}^1 \left[g(t)-R_{n-1}(t)\right]^2 w(t) \dd t \cdot \int_{-1}^1 P_n(t)^2 w(t) \dd t \notag \\
        & = \int_{-1}^1 \left[g(t)-R_{n-1}(t)\right]^2 w(t) \dd t \cdot \frac{\vol{d-2}}{N(d,n)\vol{d-1}}. \label{eqn:ubdelta}
    \end{align}
    Since $g\in C^{\gamma'}[-1,1]$, Lemma~\ref{lem_hoelderContImpliesLegendreDecay} implies that there is a sequence of polynomials $(R_n)_{n\in\N}$ with $\operatorname{deg}(R_n)\leq n$ such that
    \begin{align*}
        \sup_{t\in[-1,1]} \abs{g(t)-R_{n-1}(t)} \leq c\; n^{-\gamma'}
    \end{align*}
    for all $n\in\N$ and some constant $c>0$ that depends on $g$. This estimate allows us to infer that
    \begin{align*}
        \int_{-1}^1 \left[g(t)-R_{n-1}(t)\right]^2 w(t) \dd t
        & \leq \sup_{t\in[-1,1]} \abs{g(t)-R_{n-1}(t)}^2 \int_{-1}^1 w(t) \dd t \\
        & \leq c\; n^{-2\gamma'} \int_{-1}^1 P_0(t)^2 w(t) \dd t \\
        & = c\; n^{-2\gamma'} \frac{\vol{d-2}}{\vol{d-1}},
    \end{align*}
    where we used that $P_0 \equiv 1$. Combining this with (\ref{lem_orderOfNdn}) and (\ref{eqn:ubdelta}), we have
    \begin{align*}
        \abs{ \frac{N(d,n)\vol{d-2}}{\vol{d-1}} \int_{-1}^1 g(t) P_n(t) w(t) \dd t}
        & \leq c N(d,n)^{1/2} n^{-\gamma'}
        \leq c \; n^{-\gamma' +\frac{d-2}{2}},
    \end{align*}
    where $c>0$ is some constant depending on $d$ and $f$. Using Lemma~\ref{lem_LegendreCoeffFpm}, we know that the $a_{\alpha_i,n}^+$ which decays the slowest is the one with the smallest exponent $\alpha_i \notin\N_0$, i.e.\ $a_{\gamma_\alpha,n}^{+}$ decays the slowest. In the case $\gamma_\alpha = \infty$, ignore $a_{\gamma_\alpha,n}^{+}$ in the following calculation. Analogously, this holds for $a_{\beta_i,n}^-$ and $a_{\gamma_\beta,n}^{-}$, respectively. Since we assumed that $\gamma < \infty$, at least one of $\gamma_\alpha$ and $\gamma_\beta$ must be finite. Thus, we have
    \begin{align*}
        b_n & = \sum_{i=0}^{m_{\gamma'}} A_i \; a_{\alpha_i,n}^+ + \sum_{i=0}^{m_{\gamma'}} B_i \; a_{\beta_i,n}^- + N(d,n) \int_{-1}^1 g(t) P_n(t) w(t) \dd t \\
        & \sim c_1 (-1)^n \frac{A_{\gamma_\alpha}}{\Gamma(-\gamma_\alpha)} n^{-2\gamma_{\alpha}-1} + c_2 \frac{B_{\gamma_\beta}}{\Gamma(-\gamma_\beta)} n^{-2\gamma_{\beta}-1} + \OO\left( n^{-\gamma' +\frac{d-2}{2}} \right).
    \end{align*}
    Using the fact that $a_{\gamma,n}^+ := (-1)^n a_{\gamma,n}^-$, which is given by Lemma~\ref{lem_LegendreCoeffFpm}, we can infer that $c_1$ and $c_2$ are identical. Recall that we chose $\gamma'$ to satisfy $2\gamma + d/2 < \gamma'$, which implies that $-2\min\{\gamma_\alpha, \gamma_\beta\}-1 > -\gamma'+\frac{d-2}{2}$, such that the remainder vanishes faster than the other terms. From this observation, we obtain all statements about the case $\gamma < \infty$. In the case $\gamma = \infty$, we can simply choose $\gamma'$ as high as we like. The remainder term is the only one contributing to the asymptotic behaviour, because all non-negative integer contributions vanish for $n$ large enough. Therefore, we surpass any polynomial decay.
\end{proof}

We would like to state one general fact about power series that is going to make it less messy to check Assumption \ref{assumptionLegendreAsymp} in the subsequent section.

\begin{lem}\label{lem_powerSeries}
    Fix $\gamma > 0$ and $u\in\R$. Let
    \begin{align*}
        p(x) := \sum_{i=0}^\infty a_i x^i, \qquad
        q(x) := \sum_{j=0}^\infty b_j (x-u)^{\gamma + j}
    \end{align*}
    be power series, such that $q$ converges absolutely on some interval $[u,u+s)$ for some $s>0$. Furthermore, if $\widehat{q}(x) := \sum_{j=0}^\infty \abs{b_j}(x-u)^{\gamma+j}$ then let the range $\widehat{q}([u,u+s))$ be contained in some interval $(-r,r)$ on which $p$ converges absolutely for some $r>0$. Then $p\circ q$ can be written as an absolutely convergent series of (fractional) powers that is arbitrarily often term-wise differentiable on $(u,u+s)$. If we replace $q$ by $\widetilde{q}$ defined by
    \begin{align*}
        \widetilde{q}(x) := \sum_{j=0}^\infty b_j (u-x)^{\gamma + j}
    \end{align*}
    and $[u,u+s)$ by $(u-s,u]$ in the above we obtain the analogous result.
\end{lem}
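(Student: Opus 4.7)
The plan is to formally substitute the series for $q$ into that for $p$, collect terms of the same (fractional) power $(x-u)^{i\gamma+k}$, and use the hypothesis on $\widehat q$ to dominate the resulting double series. Interchanges of summation and all subsequent term-wise differentiations then follow from Fubini- and Weierstrass-type estimates on compact subintervals of $(u,u+s)$.

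First I would write $q(x)=(x-u)^{\gamma}Q(x-u)$ with the ordinary power series $Q(y):=\sum_{j\ge 0}b_{j}y^{j}$, so that $q(x)^{i}=(x-u)^{i\gamma}Q(x-u)^{i}$; by the Cauchy product, $Q(x-u)^{i}=\sum_{k\ge 0}c_{i,k}(x-u)^{k}$ with $c_{i,k}:=\sum_{j_{1}+\cdots+j_{i}=k}b_{j_{1}}\cdots b_{j_{i}}$. This gives the formal expansion
\begin{equation*}
(p\circ q)(x)=\sum_{i,k\ge 0} a_{i}c_{i,k}(x-u)^{i\gamma+k},\qquad x\in[u,u+s),
\end{equation*}
which is the desired series of (fractional) powers indexed by $(i,k)\in\N_{0}^{2}$. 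The triangle inequality yields $|c_{i,k}|\le \widehat c_{i,k}:=\sum_{j_{1}+\cdots+j_{i}=k}|b_{j_{1}}|\cdots|b_{j_{i}}|$, and $\widehat q(x)^{i}=(x-u)^{i\gamma}\sum_{k\ge 0}\widehat c_{i,k}(x-u)^{k}$. Hence
\begin{equation*}
\sum_{i,k\ge 0}|a_{i}c_{i,k}|(x-u)^{i\gamma+k}\le \sum_{i\ge 0}|a_{i}|\widehat q(x)^{i}<\infty,
\end{equation*}
because $\widehat q(x)\in[0,r)$ and $p$ converges absolutely on $(-r,r)$. Fubini's theorem for non-negative double series then lets me rearrange the pairs $(i,k)$ in any order and identifies the displayed series with $\sum_{i}a_{i}q(x)^{i}=(p\circ q)(x)$.

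For term-wise differentiability I would fix a compact subinterval $[u+\delta_{1},u+\delta_{2}]\subset(u,u+s)$ and choose $\delta_{3}\in(\delta_{2},s)$, so that $\widehat q(u+\delta_{3})<r$. On this subinterval the absolute value of the generic summand of the formal $m$-th derivative
\begin{equation*}
\sum_{i,k\ge 0}a_{i}c_{i,k}(i\gamma+k)(i\gamma+k-1)\cdots(i\gamma+k-m+1)(x-u)^{i\gamma+k-m}
\end{equation*}
is bounded by $\delta_{1}^{-m}|a_{i}c_{i,k}|(i\gamma+k)^{m}\delta_{2}^{i\gamma+k}$; absorbing the factor $(i\gamma+k)^{m}(\delta_{2}/\delta_{3})^{i\gamma+k}$ into a uniform constant $C_{m}$ (since $\delta_{2}/\delta_{3}<1$) yields the majorant $C_{m}\delta_{1}^{-m}\sum_{i}|a_{i}|\widehat q(u+\delta_{3})^{i}<\infty$, proving absolute and uniform convergence of the $m$-th termwise derivative on every compact subinterval of $(u,u+s)$ and thus arbitrary-order termwise differentiability. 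The analogue for $\widetilde q$ follows by applying the proven version to the bijection $x\mapsto 2u-x$ between $(u-s,u]$ and $[u,u+s)$. The only real obstacle I anticipate is bookkeeping: the crux is the dominance $|c_{i,k}|\le \widehat c_{i,k}$ together with the geometric-gap trick $(\delta_{2}/\delta_{3})^{i\gamma+k}$ that tames the polynomial factors after differentiation, both textbook Weierstrass-type estimates that require nothing beyond the stated hypotheses.
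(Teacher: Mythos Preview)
Your argument is correct. The core estimates are the same as the paper's---domination of the expanded double series by $\sum_i |a_i|\,\widehat q(x)^i$, and a ratio trick $(\delta_2/\delta_3)^{i\gamma+k}$ with $\delta_2<\delta_3<s$ to absorb the polynomial factors arising from differentiation---but your organisation is tighter. The paper first proves uniform convergence of the truncated compositions $p_N(q_M(x))\to p(q(x))$ via an $\eps$-argument (uniform continuity of $p$, uniform convergence of $q_M$), and only afterwards observes that the same reasoning with absolute values justifies reordering into a single series $\sum d_i(x-u)^{\gamma_i}$. You instead expand $q^i$ explicitly by Cauchy products, bound $|c_{i,k}|\le\widehat c_{i,k}$, and invoke Fubini once to get absolute convergence and the identification with $p\circ q$ in one stroke; this is more direct and makes the index structure $(i,k)$ transparent. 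For the derivative step the two proofs are essentially identical: the paper bounds $\sup_{x\in K,\,y\ge 0} y\bigl((x-u)/s'\bigr)^{y-1}$, which is your geometric-gap bound in different notation. One small nit: the inequality $|(i\gamma+k)\cdots(i\gamma+k-m+1)|\le(i\gamma+k)^m$ can fail for the finitely many pairs with $i\gamma+k<m-1$, but those terms are harmless and the bound $(i\gamma+k+m)^m$ fixes it without affecting the argument.
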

The proof of this result is a standard calculus procedure and can be found in the appendix.

\paragraph*{Remarks on related works.} There are many works dedicated to the study of coefficients of polynomial approximations. The closest to our result being \cite[Theorem 2.2]{Sidi09}, in which the full asymptotic behaviour of the classical Legendre coefficients for analytic functions with mixed algebraic endpoint singularities is calculated. In \cite[Theorem 2.3]{Sidi09}, logarithmic endpoint singularities are also considered. To name a few more, in \cite[Theorem 5.10 \& Remark 5.12]{wang16} the Gegenbauer coefficients of the basic functions $(1\pm t)^\gamma h(t)$ with $h$ being analytic on $[-1,1]$ are calculated. In \cite[Theorem 2.3]{zhaBoyd2021}, they calculate the precise first-order asymptotics of Chebyshev coefficients of functions of type $(1-t^2)^\gamma [\log(1-t^2)]^\theta h(t)$ with $h$ as before. In \cite[Theorem 4.1]{liuEtAl2024} upper bounds for coefficients of Jacobi polynomial approximations of functions in certain fractional Sobolev spaces are given. The major problem we had to deal with, which made none of the above results applicable (except for Sidi's in the case $d=3$), is the expansion of $\arccos(t)^{2H}$ at $-1$, which is not of type $(1+t)^\gamma h(t)$ with $h$ being analytic on $[-1,1]$ (cf.\ the proof of our Lemma \ref{lem_arccosSeriesRep}).

\section{Existence of a particular function in the RKHS}\label{sec_existencePartFktRKHS}

This chapter deals with the construction of the function in the RKHS using the tools prepared in Section \ref{sec_polynomialApprox}.

\subsection{Legendre coefficients of Arccos}\label{sec_legendreCoeffOfArccos}

In the following, we require the notion of a generalised binomial coefficient defined by
\begin{align*}
    \binom{x}{0} := 1, \qquad \binom{x}{i} := \frac{x(x-1)\ldots (x-i+1)}{i!}, \quad i=1,2,\ldots , \text{ for any }x\in\R.
\end{align*}

\begin{lem}\label{lem_arccosRepAtSing}
    The Arccos function satisfies Assumption~\ref{assumptionLegendreAsymp} with the representation
    \begin{align*}
        \arccos(t) &=&& \sum_{i=0}^\infty (-1)^i \frac{\binom{-\eh}{i}}{\eh+i} \left(\frac{1-t}{2}\right)^{\eh+i}  && \text{ as $t\nearrow +1$;} \\
        \arccos(t) &=& \pi - &\sum_{i=0}^\infty (-1)^i \frac{\binom{-\eh}{i}}{\eh+i} \left(\frac{1+t}{2}\right)^{\eh+i}  && \text{ as $t\searrow -1$.}
    \end{align*}
\end{lem}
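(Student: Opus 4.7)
The plan is first to derive the two expansions by a direct computation on the integral representation of $\arccos$, and then to verify the three items of Assumption~\ref{assumptionLegendreAsymp}, the last of which is the only non-trivial one.

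For the expansion at $+1$, I would start from $\arccos(t)=\int_{t}^{1}(1-s^{2})^{-1/2}\,ds$ and substitute $s=1-2u$, which yields $\arccos(t)=\int_{0}^{(1-t)/2}u^{-1/2}(1-u)^{-1/2}\,du$ since $1-s^{2}=4u(1-u)$. Expanding $(1-u)^{-1/2}=\sum_{i\geq 0}(-1)^{i}\binom{-1/2}{i}u^{i}$ by the generalised binomial theorem, which converges uniformly on every compact subset of $[0,1)$, and integrating term by term, gives exactly the first claimed expansion. The expansion at $-1$ then follows instantly from $\arccos(t)=\pi-\arccos(-t)$ (which maps $(1-s)/2$ with $s=-t$ to $(1+t)/2$).

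The items \ref{ass_cInf} and \ref{ass_ab} of Assumption~\ref{assumptionLegendreAsymp} are then essentially bookkeeping: $\arccos$ is smooth on $(-1,1)$; the exponents $\alpha_{i}=\beta_{i}=1/2+i$ increase to $\infty$ with $\alpha_{0}=1/2>(1-d)/2$ for every $d\geq 2$; and since $\binom{-1/2}{i}$ never vanishes, no coefficient is zero, so the propagation clause on $(A_{i}),(B_{i})$ is vacuous. The remainder-of-series estimate required by Remark~\ref{rem_asympExpDef} is immediate from the uniform convergence on compact subsets of $[0,1)$ already used above.

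The main obstacle is \ref{ass_term-wiseDiff}, term-wise differentiability of the series to all orders. My plan is to recast each expansion as a composition of power series and invoke Lemma~\ref{lem_powerSeries}. For the endpoint $-1$, take $u=-1$, $\gamma=1/2$, $q(x)=(x+1)^{1/2}$ (i.e.\ $b_{0}=1$ and $b_{j}=0$ otherwise), and let
\[
p(y)=\pi-\sum_{i\geq 0}\frac{(-1)^{i}\binom{-1/2}{i}}{(1/2+i)\,2^{1/2+i}}\,y^{1+2i},
\]
which is a power series in integer powers of $y$ whose positive radius of convergence is easily checked by the ratio test (using $|\binom{-1/2}{i}|=O(i^{-1/2})$). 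Then $\arccos(t)=p(q(t))$ on a right neighbourhood of $-1$, and Lemma~\ref{lem_powerSeries} delivers the desired term-wise differentiability to all orders of the resulting series in fractional powers of $(1+t)$. The endpoint $+1$ is handled symmetrically via $\widetilde q(x)=(1-x)^{1/2}$ and the same $p$, completing the verification.
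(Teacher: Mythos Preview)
Your argument is correct and follows essentially the same route as the paper: expand the integrand of $\arccos$ via the binomial series and integrate term by term, then obtain term-wise differentiability by factoring out the fractional power $(1\pm t)^{1/2}$ to leave an ordinary power series (you package this step as an appeal to Lemma~\ref{lem_powerSeries}, whereas the paper argues it directly). Two cosmetic slips to fix: at the endpoint $+1$ the outer series should be $p-\pi$ rather than ``the same $p$'', and at the endpoint $-1$ your list of exponents $\alpha_i=\tfrac12+i$ omits the constant term $\pi$, i.e.\ $\alpha_0=0$; neither affects the argument.
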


Even though the function Arccos is very classical, we could not locate a reference in the literature that shows all properties from Assumption~\ref{assumptionLegendreAsymp}. Therefore, we give the proof of the lemma in the appendix. 

The following result is a precise estimate of the asymptotic behaviour of the Legendre coefficients of the Arccos function raised to a fractional power. A rough upper bound was obtained during calculations in \cite{Istas2006}. However, we need more precise upper and lower bounds during our construction in Lemma~\ref{lemRKHS_SFBM_fdelta} below.

\begin{lem}\label{lem_arccosSeriesRep}
    The coefficients $\an{H}$ defined in Lemma~\ref{lem_SFBMKernelLegendreDecomp} through the series representation (\ref{eq_LegendreSeriesArccos}) behave asymptotically as
    \begin{align*}
        \an{H} & \sim c ~ n^{-H+\frac{d-3}{2}} & \text{ for } 0<H<1/2 \text{ and } n \to\infty
    \end{align*}
    for some constant $c$ that depends on $H$ and $d$. In the case $H=1/2$, we have
    \begin{align*}
        \an{1/2} & = 0 & \text{ for even $n\geq 2$,} \\
        \an{1/2} & \sim c ~ n^{-\eh+\frac{d-3}{2}} & \text{ for odd $n\to\infty$},
    \end{align*}
    where the constant depends on $d$.
\end{lem}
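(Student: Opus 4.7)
The strategy is to apply Theorem~\ref{thm_LegendreCoeffOrder} to the function
$f_H(t) := 1 - (\arccos(t)/\pi)^{2H}$,
whose Legendre coefficients are precisely $b_n = [a_n^{(H)}]^2 / N(d,n)$ by Lemma~\ref{lem_SFBMKernelLegendreDecomp}. Combined with the asymptotic $N(d,n)\sim c\,n^{d-2}$ from (\ref{lem_orderOfNdn}), an estimate $b_n \sim c\,n^{-2H-1}$ will translate (after taking non-negative square roots) into the desired $a_n^{(H)} \sim c' \, n^{(d-3)/2 - H}$.

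First, I would verify that $f_H$ satisfies Assumption~\ref{assumptionLegendreAsymp} and read off the key exponents. Using Lemma~\ref{lem_arccosRepAtSing}, near $t = 1$ one has $\arccos(t) = s^{1/2} R(s)$ with $s = (1-t)/2$ and $R$ a convergent power series with $R(0) = 2$; since $R(0) > 0$, Lemma~\ref{lem_powerSeries} (applied with $p(x) = (2+x)^{2H}$) shows that $R(s)^{2H}$ is also a convergent, term-wise differentiable power series. This yields
\[
f_H(t) = 1 - \pi^{-2H} s^H R(s)^{2H} \qquad \text{as } t\nearrow 1,
\]
with $\beta_0 = 0$, $B_0 = 1$, and next exponent $\beta_1 = H$ with $B_1 = -(2/\pi^2)^H \neq 0$. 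Near $t = -1$, writing $\arccos(t) = \pi - s^{1/2} R(s)$ with $s = (1+t)/2$ and applying the generalised binomial expansion of $(1 - s^{1/2}R(s)/\pi)^{2H}$ together with Lemma~\ref{lem_powerSeries} gives
\[
f_H(t) = \sum_{k \geq 1} \binom{2H}{k} (-1)^{k+1} \pi^{-k} s^{k/2} R(s)^k \qquad \text{as } t\searrow -1,
\]
whose leading non-integer exponent is $\alpha_0 = 1/2$ with coefficient $A_0 = 2\sqrt{2}\,H/\pi \neq 0$. Hence $\gamma_\alpha = 1/2$ and $\gamma_\beta = H$.

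For $0 < H < 1/2$ one has $\gamma = H < 1/2 = \gamma_\alpha$ so $\gamma_\alpha \neq \gamma_\beta$, and Theorem~\ref{thm_LegendreCoeffOrder}(ii) yields $b_n \sim c\,n^{-2H-1}$, which gives the claimed asymptotic for $a_n^{(H)}$. For $H = 1/2$, case (iii) applies with $j_\alpha = 0$ and $j_\beta = 1$, and
$(-1)^n A_{j_\alpha} + B_{j_\beta} = (-1)^n \sqrt{2}/\pi - \sqrt{2}/\pi$,
which is $0$ for even $n$ and $-2\sqrt{2}/\pi$ for odd $n$. Restricting Theorem~\ref{thm_LegendreCoeffOrder}(iii) to the odd subsequence gives $b_n \sim c\,n^{-2}$, producing the stated asymptotic for $a_n^{(1/2)}$ along odd indices.

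The vanishing statement $a_n^{(1/2)} = 0$ for even $n \geq 2$ is strictly stronger than the $o(n^{-2})$ provided by the theorem and requires a separate symmetry argument. Here I would exploit $f_{1/2}(t) = \arccos(-t)/\pi$, so that $f_{1/2}(t) + f_{1/2}(-t) = 1$; since $P_n(-t) = (-1)^n P_n(t)$, the even part of the Legendre expansion of $f_{1/2}$ must equal $1/2$, forcing $b_0 = 1/2$ and $b_n = 0$ for every even $n \geq 2$. The main obstacle I expect is the careful bookkeeping required to verify parts (b) and (c) of Assumption~\ref{assumptionLegendreAsymp} for both compositions $R(s)^{2H}$ and $(1 - s^{1/2} R(s)/\pi)^{2H}$; any accidental vanishing of higher-order coefficients can be handled by restricting to subsequences of non-zero terms, since Theorem~\ref{thm_LegendreCoeffOrder} only cares about the smallest non-integer exponent with non-zero coefficient.
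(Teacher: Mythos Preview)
Your proposal is correct and follows essentially the same route as the paper: verify Assumption~\ref{assumptionLegendreAsymp} for $f_H$ via the series of Lemma~\ref{lem_arccosRepAtSing}, apply Theorem~\ref{thm_LegendreCoeffOrder} with $\gamma_\beta=H$, $\gamma_\alpha=1/2$, and for $H=1/2$ use the odd symmetry of $\tfrac12-\arccos(t)/\pi$ to kill the even coefficients. The only cosmetic difference is that near $t=1$ the paper argues holomorphicity of the inner series to justify raising to the power $2H$, whereas you invoke Lemma~\ref{lem_powerSeries} with $p(x)=(2+x)^{2H}$; both are valid and yield the same expansion.
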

\begin{proof}
    We need the Arccos function raised to a fractional power to satisfy Assumption~\ref{assumptionLegendreAsymp}. In particular, we need to establish the existence of a suitable series representation at the points $\pm 1$.

    Let us start with the behaviour at $+1$. Using the series representation of Arccos from Lemma~\ref{lem_arccosRepAtSing}, we know that
    \begin{align*}
        \arccos(t)^{2H} &= \left( \sum_{i=0}^\infty (-1)^i \frac{\binom{-\eh}{i}}{\eh+i} \left(\frac{1-t}{2}\right)^{\eh+i} \right)^{2H}, & \text{ as $t\nearrow +1$.}
    \end{align*}
    We can divide this expression by $\left(\frac{1-t}{2}\right)^H$ to obtain
    \begin{align*}
        \frac{\arccos(t)^{2H}}{\left(\frac{1-t}{2}\right)^H} &= \left( \sum_{i=0}^\infty (-1)^i \frac{\binom{-\eh}{i}}{\eh+i} \left(\frac{1-t}{2}\right)^{i} \right)^{2H}, & \text{ as $t\nearrow +1$.}
    \end{align*}
    Now note that the series on the RHS is a holomorphic function defined on some domain around $+1$ taken to a fractional power. Since the series is equal to $2$ at $t=+1$ (due to the $i=0$ term), we can choose the natural branch of the complex logarithm to infer that there exists a domain, on which the series raised to the power $2H$ is holomorphic around $t=+1$. Therefore, a representation of the following type must exist:
    \begin{align*}
        \arccos(t)^{2H}  = \left(\frac{1-t}{2}\right)^H \sum_{i=0}^\infty \widetilde{B}_i (1-t)^i
         = \sum_{i=0}^\infty B_i (1-t)^{H+i},
    \end{align*}
    which is term-wise differentiable on some domain to the left of $+1$. Note that the non-integer $H$ is the smallest exponent.
    
    The representation at $-1$ works quite differently. We again start with the series representation of Arccos at $-1$ from Lemma~\ref{lem_arccosRepAtSing} and write
    \begin{align*}
        q(t):=\frac{\arccos(t)}{\pi} - 1 = -\frac{1}{\pi} \sum_{i=0}^\infty (-1)^i \frac{\binom{-\eh}{i}}{\eh+i}\left(\frac{1+t}{2}\right)^{\eh+i}.
    \end{align*}
    Additionally, we consider the series representation (cf.\ \cite[Eq. 1.110]{GradsteynRyzhik07})
    \begin{align*}
        p(z):=1-(1+z)^{2H} = - \sum_{j=1}^\infty \binom{2H}{j} ~ z^j
    \end{align*}
    at $z=0$. For generalised binomial coefficients, we know that the asymptotic behaviour is given by
    \begin{align*}
        \binom{x}{i} = \frac{\Gamma(x+1)}{\Gamma(i+1) \Gamma(x-i+1)} = (-1)^i \frac{\Gamma(i-x)}{\Gamma(i+1)\Gamma(-x)} \sim c ~ \frac{(-1)^i}{\Gamma(-x)} i^{-x-1}
    \end{align*}
    for some constant $c>0$ using (\ref{eq_gammaQuotientCalc}) for $x\notin\N_0$. This reveals that $q$ and $p$ are absolutely convergent -- the first one for $t\in[-1,1]$, the second one for $z\in[-1,1]$. We want to apply Lemma~\ref{lem_powerSeries}. Since all coefficients of $q$ are negative and the range of $q$ is $[-1,0)$ for $t\in[-1,1)$, the range of $\widehat{q}([-1,1))$ (cf.\ notation in Lemma~\ref{lem_powerSeries}) is contained in $(-1,1)$, where $p$ is absolutely convergent. Thus, we obtain that the composed power series $p(q(.))$ exists on $[-1,1)$ and is term-wise differentiable on the interval $(-1,1)$. Therefore, all parts of Assumption~\ref{assumptionLegendreAsymp} are satisfied. Note that the non-integer number $\eh$ is the smallest exponent in the series for $p(q(.))$. 

    In case $H<\eh$, we can thus apply Theorem~\ref{thm_LegendreCoeffOrder} with $\gamma_\beta = H < \eh$ and $\gamma_\alpha = \eh$ to obtain the asymptotics of the Legendre coefficients $(b_n)$, which by (\ref{eq_LegendreSeriesArccos}) are related to the coefficients $(a_n^{(H)})$ through $b_n=\anbr{H}^2/N(d,n)$. Using (\ref{lem_orderOfNdn}), we thus obtain that $\an{H} \sim c \, n^{-H+\frac{d-3}{2}}$, for any $H<1/2$.

    In case $H=\eh$, we may apply Theorem~\ref{thm_LegendreCoeffOrder} with $\gamma_\alpha = \gamma_\beta = \eh$ and $A_{j_\alpha} = - B_{j_\beta} = \sqrt{2}$. This lets us derive the asymptotic behaviour for odd $n$. For even $n \geq 2$, note that
    \begin{align*}
        \int_{-1}^1 \left(1-\frac{\arccos(t)}{\pi}\right) P_n(t) ~w(t)~\dd t = \int_{-1}^1 \left(\eh - \frac{\arccos(t)}{\pi}\right) P_n(t) ~w(t)~\dd t = 0,
    \end{align*}
    due to Proposition~\ref{prop_orthogonalPolywrtSmallDeg} combined with $n>0$, and because the integrand on the RHS is antisymmetric.
\end{proof}

Concerning the behaviour of the coefficients of the series representation of Arccos, we need to be even more specific and prove that none of the odd Legendre series coefficients is \textit{ever} zero. This is important, since we shall divide by these coefficients later on. The strict positivity is proven in Lemma~\ref{lem_arccosFracNonZeroCoeff}, for which we need the following auxiliary result.

\begin{lem}\label{lem_monotoneMonomials}
    Let $n,m\in \N_0$. Then
    \begin{align*}
        \int_{-1}^1 x^n (P_{m}(x) - P_{m+2}(x)) w(x) \dd x \geq 0.
    \end{align*}
\end{lem}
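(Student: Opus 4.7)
The integrand vanishes by parity when $n$ and $m$ have opposite parity, and by orthogonality (Proposition~\ref{prop_orthogonalPolywrtSmallDeg}) when $n < m$, so only the case $n \geq m$ with $n \equiv m \pmod{2}$ needs attention. The strategy is to reduce the inequality to a statement in dimension $d+2$ via the polynomial identity
\begin{equation*}
    P_m(x) - P_{m+2}(x) = \frac{2m+d}{d-1}\,(1-x^2)\, P_m^{(d+2)}(x), \qquad m \geq 0,\ d \geq 2, \quad (\star)
\end{equation*}
where $P_m^{(d+2)}$ denotes the $m$-th generalised Legendre polynomial in dimension $d+2$. This identity is natural: the left-hand side vanishes at $x = \pm 1$ because $P_k(\pm 1) = (\pm 1)^k$ and $m,m+2$ share parity, so it is divisible by $1-x^2$; moreover, passing from dimension $d$ to $d+2$ corresponds to raising the Gegenbauer parameter from $\lambda = d/2 - 1$ to $\lambda+1$, which is precisely what arises through differentiation via $(C_n^{(\lambda)})'(x) = 2\lambda C_{n-1}^{(\lambda+1)}(x)$.

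To derive $(\star)$, I would combine three classical facts for Gegenbauer polynomials: the derivative relation above, the ODE consequence $(1-x^2)(C_n^{(\lambda)})'(x) = -nxC_n^{(\lambda)}(x) + (n+2\lambda-1)C_{n-1}^{(\lambda)}(x)$, and the three-term recurrence $(n+1)C_{n+1}^{(\lambda)}(x) = 2(n+\lambda)xC_n^{(\lambda)}(x) - (n+2\lambda-1)C_{n-1}^{(\lambda)}(x)$. Substituting the first into the second yields $2\lambda(1-x^2)C_{n-1}^{(\lambda+1)}(x) = -nxC_n^{(\lambda)}(x) + (n+2\lambda-1)C_{n-1}^{(\lambda)}(x)$; using the third to eliminate $xC_n^{(\lambda)}$, shifting $n \mapsto m+1$, and converting to the normalised polynomials $P_\cdot^{(d)} = C_\cdot^{(\lambda)}\Gamma(2\lambda)k!/\Gamma(k+2\lambda)$ produces $(\star)$ once the $\Gamma$-function factors collapse. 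The mildly degenerate case $d = 2$ (where $\lambda = 0$) can be verified directly using $P_m^{(2)}(\cos\theta) = \cos(m\theta)$ and $P_m^{(4)}(\cos\theta) = \sin((m+1)\theta)/[(m+1)\sin\theta]$ together with $\cos(m\theta)-\cos((m+2)\theta) = 2\sin\theta\sin((m+1)\theta)$.

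Given $(\star)$, the conclusion follows at once. Absorbing the factor $1-x^2$ into the weight rewrites the integral in the lemma as
\begin{equation*}
    \frac{2m+d}{d-1}\int_{-1}^1 x^n P_m^{(d+2)}(x)\,(1-x^2)^{(d-1)/2}\, \dd x,
\end{equation*}
and by~(\ref{eq_LegendreNorm}) applied in dimension $d+2$ this integral equals a positive multiple of the coefficient $c_{n,m}$ in the expansion $x^n = \sum_{k=0}^n c_{n,k} P_k^{(d+2)}(x)$. Induction on $n$ gives $c_{n,k} \geq 0$ for all $k$: the base case $x^0 = P_0^{(d+2)}$ is trivial, and the inductive step uses $xP_k^{(d+2)}(x) = \tfrac{k+d}{2k+d}P_{k+1}^{(d+2)}(x) + \tfrac{k}{2k+d}P_{k-1}^{(d+2)}(x)$, whose coefficients are both non-negative. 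The main obstacle is the derivation of $(\star)$ itself: the $\Gamma$-function bookkeeping is delicate, and the case $d=2$ has to be addressed separately from the generic argument.
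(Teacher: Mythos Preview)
Your proof is correct and takes a genuinely different route from the paper. The paper proceeds by direct computation: after disposing of the parity and $n<m$ cases as you do, it invokes the closed-form integral \cite[7.311(2)]{GradsteynRyzhik07} for $\int_{-1}^1 x^n P_m(x)\,w(x)\,\dd x$ (via the same continuous-extension device used in Lemma~\ref{lem_LegendreCoeffFpm}), writes down the corresponding value for $P_{m+2}$, and simplifies the difference to a manifestly positive expression with the factor $(n-m-1)(2m+d)$.

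Your argument is more structural: the dimensional-shift identity $(\star)$ converts the question into the non-negativity of the coefficients of $x^n$ in the $P_k^{(d+2)}$ basis, which follows cleanly from the three-term recurrence having non-negative coefficients. This explains \emph{why} the inequality holds and avoids any table lookup, at the cost of deriving $(\star)$ (standard, but with some $\Gamma$-bookkeeping) and treating $d=2$ separately. The paper's approach is shorter and self-contained once the Gradshteyn--Ryzhik formula is accepted, but yours would transfer more readily to related inequalities where no closed form is available.
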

\begin{proof}
    Recall that the polynomials $P_m$ are even, whenever $m$ is even, and odd, whenever $m$ is odd. Thus, if $m$ is even and $n$ is odd or if $m$ is odd and $n$ is even, then the integral is zero and the statement is true. If $n < m$, the statement follows from Proposition \ref{prop_orthogonalPolywrtSmallDeg}. 
    
    Formula \cite[7.311(2)]{GradsteynRyzhik07} calculates the integral from $0$ to $1$ of products of Gegenbauer polynomials together with monomials w.r.t.\ the weight $w$. Using the same type of continuous extension argument as in the proof of Lemma~\ref{lem_LegendreCoeffFpm} and symmetry, which requires us to multiply by $2$ to obtain the integral from $-1$ to $1$, we can infer that
    \begin{align}\label{eq_gradsteynPolynomial}
        \int_{-1}^1 x^n P_{m}(x) w(x) \dd x
        & = \frac{\Gamma(n+1) \Gamma(\frac{d-1}{2}) \Gamma(\frac{n-m+1}{2}) }{2^m \Gamma(n-m+1) \Gamma(m+\frac{n-m+d}{2})}
    \end{align}
    holds for $n\geq m$ with $m,n$ both even or both odd. If $n=m$ then we immediately obtain that
    \begin{align*}
        \int_{-1}^1 x^m (P_{m}(x) - P_{m+2}(x)) w(x) \dd x = \int_{-1}^1 x^n P_{n}(x) w(x) \dd x > 0.
    \end{align*}
    Thus, let us assume that $n\geq m+2$. Using (\ref{eq_gradsteynPolynomial}), we can calculate that
    \begin{align*}
        \int_{-1}^1 & x^n (P_{m}(x) - P_{m+2}(x)) w(x) \dd x \\
        & = \frac{ \Gamma(n+1) \Gamma(\frac{d-1}{2}) \Gamma(\frac{n-m+1}{2} - 1) }{2^{m+2} \Gamma(n-m+1) \Gamma(m+\frac{n-m+d}{2}+1)} \\
        & \qquad \cdot \left[ 4 \left(m+\frac{n-m+d}{2} \right) \left(\frac{n-m+1}{2}-1\right) - (n-m+1-2)(n-m+1-1) \right] \\
        & = \frac{ \Gamma(n+1) \Gamma(\frac{d-1}{2}) \Gamma(\frac{n-m+1}{2} - 1) }{2^{m+2} \Gamma(n-m+1) \Gamma(m+\frac{n-m+d}{2}+1)} \left[ (n-m-1)(2m+d) \right] > 0,
    \end{align*}
    using the assumption that $n\geq m+2$.
\end{proof}

\begin{lem}\label{lem_arccosFracNonZeroCoeff}
    The coefficients $\an{H}$ defined in Lemma~\ref{lem_SFBMKernelLegendreDecomp} through the series representation in (\ref{eq_LegendreSeriesArccos}) are strictly positive for $n=0$ and any odd $n$.
\end{lem}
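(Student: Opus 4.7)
The plan is to reduce the strict positivity of $a_n^{(H)}$ to that of a Legendre integral, and then to exploit a non-negative Taylor expansion of $g(t) := 1 - (\arccos(t)/\pi)^{2H}$ at the origin. Since $a_n^{(H)}$ is chosen as the non-negative square root arising in the representation (\ref{eq_LegendreSeriesArccos}), strict positivity $a_n^{(H)} > 0$ is equivalent to $\int_{-1}^1 g(t) P_n(t) w(t)\,dt > 0$, where $w(t) = (1-t^2)^{(d-3)/2}$.

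The main step, and also the principal obstacle, is to show that $g$ admits a Taylor series $g(t) = \sum_{m\geq 0} d_m t^m$ at the origin with all $d_m \geq 0$ and with $d_0 > 0$ and $d_n > 0$ for every odd $n$. I would factor $g = F \circ U$ with
\begin{align*}
F(u) := 1 - 2^{-2H}(1-u)^{2H}, \qquad U(t) := \frac{2}{\pi}\arcsin t;
\end{align*}
this peels off the value $\arccos(0)/\pi = 1/2$ so that the argument of the outer fractional power has a series whose sign structure is transparent. For $0 < 2H \leq 1$ one checks directly that $(-1)^j \binom{2H}{j} \leq 0$ for every $j \geq 1$, so the binomial series of $F$ at $u=0$ has non-negative Taylor coefficients. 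The classical arcsin expansion $U(t) = \sum_{k\geq 0} u_k t^{2k+1}$ has $u_k > 0$; since $U(0)=0$, the composition therefore inherits non-negative Taylor coefficients. One reads off $d_0 = 1 - 2^{-2H} > 0$ directly, and for odd $n$ the single $j=1$ contribution $F'(0)\, u_{(n-1)/2} > 0$ already forces $d_n > 0$.

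With the positive Taylor expansion in hand, the remaining work is routine. The partial sums $g_M(t) := \sum_{m=0}^M d_m t^m$ converge pointwise to $g$ on $(-1,1)$ and by non-negativity of the $d_m$ satisfy $|g_M(t)| \leq g(|t|) \leq 1$, so dominated convergence with the integrable bound $w(t)$ gives
\begin{align*}
\int_{-1}^1 g(t) P_n(t) w(t) \, dt = \sum_{m=0}^\infty d_m \int_{-1}^1 t^m P_n(t) w(t) \, dt.
\end{align*}
By Proposition~\ref{prop_orthogonalPolywrtSmallDeg} and parity, the monomial integral on the right vanishes unless $m \geq n$ and $m \equiv n \pmod 2$; in that case formula (\ref{eq_gradsteynPolynomial}) from the proof of Lemma~\ref{lem_monotoneMonomials} shows it to be strictly positive. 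For $n=0$ the $m=0$ term $d_0 \int w\,dt$ is strictly positive; for odd $n$ the $m=n$ term $d_n \int t^n P_n w\,dt$ is strictly positive; all remaining contributions are non-negative. Hence the integral is strictly positive in both cases, and the lemma follows.
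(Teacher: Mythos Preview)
Your proof is correct, and it shares its starting point with the paper's argument: both decompose $g(t)=1-(\arccos(t)/\pi)^{2H}$ as $F\circ U$ with $F(u)=1-2^{-2H}(1-u)^{2H}$ and $U(t)=(2/\pi)\arcsin t$, and both use the non-negativity of all Taylor coefficients $d_m$ at the origin. From there the two arguments diverge.

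The paper does \emph{not} observe that $d_n>0$ for odd $n$. Instead it combines the non-negative expansion with Lemma~\ref{lem_monotoneMonomials} to show that the sequence $\bigl[a_n^{(H)}\bigr]^2/N(d,n)^2$ is non-increasing along odd $n$, and then invokes the asymptotic result of Lemma~\ref{lem_arccosSeriesRep} (hence ultimately Theorem~\ref{thm_LegendreCoeffOrder}) to conclude that the limit is approached from strictly positive values. Your route is more self-contained: the extra observation that the $j=1$ contribution $F'(0)\,u_{(n-1)/2}$ is strictly positive gives $d_n>0$ outright, and then the explicit positivity of $\int_{-1}^1 t^m P_n(t)\,w(t)\,dt$ for $m\geq n$ of the same parity (formula~(\ref{eq_gradsteynPolynomial})) finishes the argument without any appeal to monotonicity or to the asymptotic machinery of Section~\ref{sec_polynomialApprox}. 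What the paper's approach buys is the monotonicity statement itself, which is of some independent interest; what yours buys is a shorter and logically lighter proof of the lemma as stated.
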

\begin{proof}
    The coefficient $a_0^{(H)}$ is non-zero, since $P_0 \equiv 1$ for any $d$ and $1-\left(\frac{\arccos(x)}{\pi}\right)^{2H}$ is non-negative, continuous and non identical to the zero function. To obtain the other coefficients we need more background.

    We already know from Lemma~\ref{lem_arccosSeriesRep} that the odd coefficients are non-negative and converge to zero with some polynomial rate. We only need to show that the odd coefficients are monotone decreasing to imply the statement.

    The series representations
    \begin{align*}
        q(x) := \eh - \frac{\arccos(x)}{\pi} & = \frac{1}{\pi} \sum_{j=0}^\infty \frac{(2j)!}{(2^j j!)^2} \frac{x^{2j+1}}{2j+1}, & \text{ for $x\in [-1,1]$}, \\
        p(x) := 1-\left(\eh - z\right)^{2H} & = \left(1-2^{-2H}\right) + 2^{-2H} \sum_{i=1}^\infty (-1)^{i+1} \binom{2H}{i} (2 z)^i, & \text{ for $z\in \left[-\eh,\eh\right]$},
    \end{align*}
    are well-known: The expansion for Arccos and its radius of convergence can be inferred from (cf.\ \cite[\href{https://dlmf.nist.gov/4.24.E1}{Equation 4.21.1}]{DLMF}) together with its elementary relation to Arcsin. The representation of fractional powers and their series' absolute convergence on the given domain can be inferred from \cite[Eq. 1.110]{GradsteynRyzhik07}. 
    Therefore, both series define a power series. Additionally, since the range of $q$ is $[-1/2, 1/2]$ and thus compatible with $p$, we may compose them into the power series $p\circ q$ \begin{align*}
        1-\left(\frac{\arccos(x)}{\pi}\right)^{2H} & = \sum_{k=0}^\infty r_k x^k,
    \end{align*}
    where we note that $r_k \geq 0$. Indeed, the positivity of these coefficients follows by inserting both series into one another: The coefficients of $q$ are all positive and $\sign\left(\binom{2H}{i}\right) = (-1)^{i+1}$ for $i\geq 1$. Since additionally $1>2^{-2H}$, all coefficients of the composed power series are positive.
    
    Using the inequality given in Lemma~\ref{lem_monotoneMonomials}, Equation (\ref{eq_LegendreNorm}), the absolute convergence of the composed series, and the non-negativity of the coefficients we calculate that
    \begin{align*}
        \frac{\vol{d-1}}{\vol{d-2}} \frac{\anbr{H}^2}{N(d,n)^2} & = \int_{-1}^1 \left(1-\left(\frac{\arccos(x)}{\pi}\right)^{2H}\right) P_n(x) w(x) \dd x \\
        & = \int_{-1}^1 \sum_{k=0}^\infty r_k x^k P_n(x) w(x) \dd x \\
        & = \sum_{k=0}^\infty r_k \int_{-1}^1 x^k P_n(x) w(x) \dd x \\
        & \geq \sum_{k=0}^\infty r_k \int_{-1}^1 x^k P_{n+2}(x) w(x) \dd x \\
        & = \int_{-1}^1 \sum_{k=0}^\infty r_k x^k P_{n+2}(x) w(x) \dd x \\
        & = \frac{\vol{d-1}}{\vol{d-2}} \frac{\left[a_{n+2}^{(H)} \right]^2}{N(d,n+2)^2}.
    \end{align*}
    This implies that the odd coefficients of the Legendre series representation are non-increasing. Since we know that asymptotically $\frac{\anbr{H}^2}{N(d,n)}$ behaves like $c~n^{-2H-1}$ for odd $n$ due to Lemma~\ref{lem_arccosSeriesRep}, this term can never be zero. And since $N(d,n)$ is non-zero, neither can any of the odd $\an{H}$ be zero.
\end{proof}

\subsection{Construction of the RKHS function}\label{sec_constrOfRKHSFct}

In this subsection we are going to apply all the preparatory work to define a function in the RKHS with all of the properties that are necessary for our proof of the main theorem. Let us first define an auxiliary function.

\begin{lem}\label{lem_RKHSConstruction_auxFunction}
    Let $H>0$ and define $G^{(H)}:[-1,1]\to \R$ by
    \begin{align} \label{eqn:defnoffunctionfH}
        G^{(H)}(t) := 2^H - (1-t)^H + (1+t)^H
    \end{align}
    The function $\gh$ can be expanded in terms of Legendre polynomials of dimension $d$ as
        \begin{align}
            \gh(t) = \sum_{n=0}^\infty \frac{\bnbr{H}^2}{N(d,n)} P_n(t) \label{eq_LegendreSeriesFH}
        \end{align}
        with
        \begin{align*}
            \bn{H} &= 0, &\text{ for every even } n\geq 2; \\
            \bn{H} &\sim c \; n^{-H+\frac{d-3}{2}}, & \text{ for odd $n\to\infty$},
        \end{align*}
        and a constant $c$ which depends on $H$ and $d$. The representation (\ref{eq_LegendreSeriesFH}) converges uniformly on $[-1,1]$ and in $\ltw$.
\end{lem}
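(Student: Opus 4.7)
The strategy is to decompose $\gh$ into a constant plus an odd function of $t$ and then invoke Lemma~\ref{lem_LegendreCoeffFpm} directly. I would first write
\[
\gh(t) \;=\; 2^H + \bigl[(1+t)^H - (1-t)^H\bigr].
\]
The constant contributes only to the $P_0$-coefficient (value $2^H$) and vanishes at every $n\geq 1$. The bracketed function is odd in $t$, and since $P_n$ has the same parity as $n$, orthogonality forces its Legendre coefficients to vanish at every even $n$. Combining the two parts immediately yields $\bn{H}=0$ for every even $n\geq 2$, while the $n=0$ coefficient equals $2^H$ (so one may take $\bn{H}=2^{H/2}$).

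For the odd-indexed coefficients, I would apply Lemma~\ref{lem_LegendreCoeffFpm} with $\gamma=H$, which is non-integer since $H\in(0,1/2]$. Using the symmetry $a^+_{\gamma,n}=(-1)^n a^-_{\gamma,n}$ from that lemma, the Legendre coefficient of $\gh$ at odd $n$ equals $-2\, a^-_{H,n}$. Inspecting the explicit formula of Lemma~\ref{lem_LegendreCoeffFpm}(i), every factor of $a^-_{H,n}$ is strictly positive except $\Gamma(-H)$, which is strictly negative on $(0,1)$; hence $a^-_{H,n}<0$ and $-2\,a^-_{H,n}>0$ for every $n\geq 1$. This sign check, valid for \emph{every} $n\geq 1$ rather than only asymptotically, is the only subtle step in the argument, and it justifies expressing all Legendre coefficients in the non-negative form $\bnbr{H}^2/N(d,n)$ with real $\bn{H}$. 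The asymptotic part~(ii) of Lemma~\ref{lem_LegendreCoeffFpm} then gives $-2\, a^-_{H,n}\sim c\, n^{-2H-1}$ for some $c>0$ depending on $H$ and $d$; multiplying by $N(d,n)\sim c'\,n^{d-2}$ from~(\ref{lem_orderOfNdn}) and taking the positive square root yields $\bn{H}\sim c''\, n^{-H+(d-3)/2}$ as odd $n\to\infty$.

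For convergence, observe that the Legendre coefficients decay as $\OO(n^{-2H-1})$, which is summable for every $H>0$. Lemma~\ref{lem_CoeffSummable} then delivers uniform convergence of~(\ref{eq_LegendreSeriesFH}) on $[-1,1]$. Convergence in $\ltw$ is automatic since $\gh$ is continuous on the compact interval (hence in $\ltw$) and the Legendre system is complete; pointwise identification of the two sides of~(\ref{eq_LegendreSeriesFH}) with $\gh$ then follows because the continuous function $\gh$ and the uniform limit of the Legendre series coincide as elements of $\ltw$ and both sides are continuous.
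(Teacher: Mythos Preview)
Your proposal is correct and follows essentially the same route as the paper: both arguments compute the Legendre coefficients of $G^{(H)}$ directly from Lemma~\ref{lem_LegendreCoeffFpm}, use the relation $a^+_{H,n}=(-1)^n a^-_{H,n}$ to see that even-indexed coefficients vanish and odd ones equal $-2a^-_{H,n}$, verify positivity via the sign of $\Gamma(-H)$, and then combine the asymptotics of $a^-_{H,n}$ with $N(d,n)\sim c\,n^{d-2}$ to obtain $\bn{H}\sim c\,n^{-H+(d-3)/2}$. The only cosmetic difference is that you invoke Lemma~\ref{lem_CoeffSummable} directly for uniform convergence, whereas the paper routes this through Proposition~\ref{prop_sqIntegralRep}; also note that your parenthetical restriction ``since $H\in(0,1/2]$'' is narrower than the hypothesis $H>0$ of the lemma---the argument actually works for any non-integer $H>0$, and the asymptotic claim tacitly excludes integer $H$ in both your proof and the paper's.
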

\begin{proof}
    Most of the work has already been done in Lemma~\ref{lem_LegendreCoeffFpm} as it allows us to infer that the $n$-th Legendre coefficient $b_n$ is given by
    \begin{align}\label{eq_coeffGH}
        b_0 &= 2^H a_{0,0}^{\pm}, & \notag\\
        b_n &= - a_{H,n}^- + a_{H,n}^+ = - a_{H,n}^- (1-(-1)^n), &\text{ for $n\geq 1$},
    \end{align}
    From this, we immediately obtain that $b_0=a_{0,n}^{\pm} > 0$, and that $b_n = 0$ for any even $n\in\N$. Furthermore, all factors in the definition of $a_{H,n}^-$ are positive, except for $\frac{1}{\Gamma(-H)}$, which is negative and which implies that, $b_n > 0$ for any odd $n$.

    The asymptotic behaviour and thus the absolute summability of the coefficients is implied by (\ref{eq_coeffGH}) together with the asymptotics given in Lemma~\ref{lem_LegendreCoeffFpm}.

    Thus, the representation (\ref{eq_LegendreSeriesFH}) exists by Proposition~\ref{prop_sqIntegralRep}. By the definition of  the sequence $(b_n^{(H)})$ in (\ref{eq_LegendreSeriesFH}), we have $b_n=\bnbr{H}^2/N(d,n)$ so that this together with (\ref{lem_orderOfNdn}) implies the properties of $(b_n^{(H)})$ as claimed.
\end{proof}

We can use the auxiliary function $G^{(H)}$ for the main construction in the next lemma. In the following, we denote by $\mathbb{H}$ the RHKS of SFBM $(S_H(\eta))$. Further, we use the following notation to refer to spherical-distance balls
\begin{align}\label{def_sphDistBall}
    \bb_\delta(\eta) := \left\{ \zeta\in\sph_{d-1} : d(\eta, \zeta) \leq \delta \right\}.
\end{align}
Later on, half-spheres are of particular interest. Therefore, we define
\begin{align}\label{def_halfSphere}
    \hh(\eta) := \{ \zeta\in\sph_{d-1} : d(\eta,\zeta) \leq \pi/2 \} = \bb_\ph(\eta)
\end{align}
as the half sphere centred at $\eta \in \sph_{d-1}$. 

\begin{lem}\label{lemRKHS_SFBM_fdelta} Fix $0<H\leq 1/2$ and $0<\alpha<1/2$. Then, for any $\delta>0$, the following function $\fda$ is in the RKHS $\mathbb{H}$ of SFBM $(S_H(\eta))$: 
    \begin{align*}
        f_{\delta,\alpha}(\eta) := 16^{(H+\alpha)} \left[ G^{((H+\alpha)/2)}(1) - G^{((H+\alpha)/2)}(\abrac{\eta,O}) \right] \delta^{-(H+\alpha)},
    \end{align*}
    where $G^{(H)}$ is the function defined in (\ref{eqn:defnoffunctionfH}).
    The function $\fda$ has the following properties:
    \begin{enumerate}
        \item $\delta^{2(H+\alpha)} \norm{\fda}_\mathbb{H}^2 \leq c$, i.e.\ the RKHS norm of $\delta^{H+\alpha} \fda$ is uniformly bounded in $\delta$ by some constant depending on $\alpha$, $H$, and $d$; \label{RKHS_SFBM_fda_d3_prop_normbound}
        \item $f_{\delta,\alpha}(\eta) \geq 1$ for all $\eta \in\sph_{d-1}\setminus \bb_\delta(O)$, where $\bb_\delta$ is as defined in (\ref{def_sphDistBall}). \label{RKHS_SFBM_fda_d3_prop_incr}
    \end{enumerate}
\end{lem}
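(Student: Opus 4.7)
The plan is to construct an $L_2(\sph_{d-1})$ function $\ell$ such that $J\ell=\fda$, where $J$ is the integral operator appearing in the proof of Lemma~\ref{lemRKHSofSFBM}. Property (a) will then follow from the norm bound (\ref{eq_RKHSNormBoundEll}), while property (b) is a direct estimate on the explicit formula for $\fda$. Writing $\hah=(H+\alpha)/2$ and $C_{\delta,\alpha}:=16^{H+\alpha}\delta^{-(H+\alpha)}$, so that $\fda(\eta)=C_{\delta,\alpha}[G^{(\hah)}(1)-G^{(\hah)}(\abrac{\eta,O})]$, Lemma~\ref{lem_RKHSConstruction_auxFunction} gives the uniformly convergent series
\begin{align*}
    \fda(\eta) = -C_{\delta,\alpha}\sum_{n \text{ odd}} \frac{\bnbr{\hah}^2}{N(d,n)}\bigl(P_n(\abrac{\eta,O})-1\bigr).
\end{align*}
I therefore look for an $\ell$ that is radial about $O$, namely $\ell(\xi)=h(\abrac{O,\xi})$ with $h=\sum_n c_n P_n$ in $\ltw$.

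Applying Lemma~\ref{lem_productIntegration} to $h$ and the kernel $g_H=\vol{d-1}^{-1/2}\sum_n \an{H} P_n$ of Corollary~\ref{cor_kernelIntegralRep}, matching coefficients $n$-by-$n$ against the target series forces
\begin{align*}
    c_n := -\frac{\sqrt{2}\,C_{\delta,\alpha}}{\pi^H\sqrt{\vol{d-1}}} \cdot \frac{\bnbr{\hah}^2}{\an{H}} \quad\text{for odd } n\geq 1, \qquad c_n:=0 \quad\text{otherwise}.
\end{align*}
Lemma~\ref{lem_arccosFracNonZeroCoeff} guarantees $\an{H}>0$ for all odd $n$, so this is well-defined. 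Plugging in the asymptotics $\bn{\hah}\sim c\,n^{-(H+\alpha)/2+(d-3)/2}$ (Lemma~\ref{lem_RKHSConstruction_auxFunction}), $\an{H}\sim c\,n^{-H+(d-3)/2}$ (Lemma~\ref{lem_arccosSeriesRep}), and $N(d,n)\sim c\,n^{d-2}$ (cf.\ (\ref{lem_orderOfNdn})), the summand $c_n^2/N(d,n)$ behaves like $C_{\delta,\alpha}^2\,n^{-2\alpha-1}$, which is summable precisely because $\alpha>0$. Hence Lemma~\ref{lem_simpleRadialFunctionIntegration} combined with the $L_2^w$-norm formula (\ref{eq_LegendreNorm}) yields $\norm{\ell}_{L_2(\sph_{d-1})}^2=\vol{d-1}\sum_n c_n^2/N(d,n)\leq c\,C_{\delta,\alpha}^2$. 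A second application of Lemma~\ref{lem_productIntegration}, together with the uniformly convergent expansion of $G^{(\hah)}$, confirms $J\ell=\fda$; then (\ref{eq_RKHSNormBoundEll}) gives property (a).

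Property (b) reduces to an explicit bound. Using $1\pm\cos\theta=2\sin^2(\theta/2)$ and $2\cos^2(\theta/2)$ respectively,
\begin{align*}
    G^{(\hah)}(1)-G^{(\hah)}(\cos\theta) = 2^{\hah}\bigl(1+\sin(\theta/2)^{H+\alpha}-\cos(\theta/2)^{H+\alpha}\bigr) \geq 2^{\hah}\sin(\theta/2)^{H+\alpha},
\end{align*}
since $\cos(\theta/2)\leq 1$. For $\eta\in\sph_{d-1}\setminus\bb_\delta(O)$, set $\theta:=d(\eta,O)\geq\delta$ and use $\sin x\geq 2x/\pi$ on $[0,\pi/2]$ to get $\sin(\theta/2)\geq\delta/\pi$; hence $\fda(\eta)\geq 16^{H+\alpha}\cdot 2^{\hah}(\delta/\pi)^{H+\alpha}\delta^{-(H+\alpha)}=(2^{9/2}/\pi)^{H+\alpha}\geq 1$. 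The main obstacle is the construction of $\ell$ in the first place: the coefficient matching forces division by $\an{H}$, so the positivity guaranteed by Lemma~\ref{lem_arccosFracNonZeroCoeff} is essential, and the sharp polynomial asymptotics of both $\an{H}$ and $\bn{\hah}$ are exactly what is needed to make $c_n^2/N(d,n)$ summable under the hypothesis $\alpha>0$.
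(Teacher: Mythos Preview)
Your proof is correct and follows essentially the same strategy as the paper: construct $\ell\in L_2(\sph_{d-1})$ as a Legendre series with coefficients $\bnbr{\hah}^2/\an{H}$, verify square-summability via the asymptotics of Lemmas~\ref{lem_arccosSeriesRep} and~\ref{lem_RKHSConstruction_auxFunction}, and read off (a) from the norm bound (\ref{eq_RKHSNormBoundEll}). Two minor cosmetic differences: the paper includes the $n=0$ term in its $\ell$ (which contributes nothing to $J\ell$ since $P_0\equiv 1$, so your omission is harmless and in fact yields a slightly smaller $L_2$-norm), and for (b) the paper uses the cruder bound $1-\cos x\ge x^2/16$ in place of your half-angle identities plus Jordan's inequality $\sin x\ge 2x/\pi$.
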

\begin{proof}
    The proof is divided into several steps. The goal is to construct a function $\ell_{\delta,\alpha}^{(H)} \in L_2(\sph_{d-1})$, such that $\ell_{\delta,\alpha}^{(H)}$ integrated w.r.t.\ $m_\eta^{(H)}$ is equal to $\fda$, from which we can infer using Lemma~\ref{lemRKHSofSFBM} that $\fda$ is in the RKHS. We first write down the function $\ell_{\delta,\alpha}^{(H)}$, then we  show that it is in $L_2(\sph_{d-1})$. After this, we show that it yields the correct function when integrated w.r.t.\ the RKHS kernel. Lastly, we need to check the properties (a) and (b).

    \textit{Step 1: Preparation:} By Lemma~\ref{lem_RKHSConstruction_auxFunction}, we obtain a decomposition of $G^{((H+\alpha)/2)}$ into a basis of Legendre polynomials given in (\ref{eq_LegendreSeriesFH}) with coefficients $(b_n^{((H+\alpha)/2)})$. We also use the coefficients $(a_n^{(H)})$ from (\ref{eq_LegendreSeriesArccos}). Further, recall the functions
    \begin{align*}
        g_{H}(\abrac{\eta,\zeta}) &= \frac{1}{\sqrt{\vol{d-1}}} \sum_{n=0}^\infty  \an{H} P_n(\abrac{\eta,\zeta})\qquad \text{ and } \\
        m_\eta^{(H)}(\xi) &= \frac{\pi^{H}}{\sqrt{2}} \left[g_{H}(\abrac{\eta,\xi}) - g_{H}(\abrac{O,\xi})\right]
    \end{align*}
    from the representation of the RKHS in Corollary~\ref{cor_kernelIntegralRep}. Let $\N_0^\# := \{0\} \cup \{2k-1 : k\in\N \}$ denote the positive odd integers joined with $0$ and set
    \begin{align*}
        \ell_{\delta,\alpha}^{(H)}(\xi) := - \delta^{-(H+\alpha)} \frac{\sqrt{2} \pi^{-H} 16^{(H+\alpha)}}{\sqrt{\vol{d-1}}} \sum_{n\in\N_0^\#}  \frac{\bnbr{\hah}^2}{\an{H}} P_n(\abrac{O,\xi}).
    \end{align*}
    The unusual index set is used to avoid dividing by zero in the Brownian case $H=1/2$, since $\an{1/2} = 0$ for any $n\in \N \setminus\N_0^\#$. Note that $\an{H} \neq 0$ for any $n\in\N_0^\#$ and any $H\leq \eh$ by Lemma~\ref{lem_arccosFracNonZeroCoeff}. This index set is not a restriction, since $\bn{\hah} = 0$ for any $n\in \N \setminus\N_0^\#$, by Lemma~\ref{lem_RKHSConstruction_auxFunction}.
    
    \textit{Step 2: $L_2(\sph_{d-1})$ norm of $\ell_{\delta,\alpha}^{(H)}$:} To see that $\ell_{\delta,\alpha}^{(H)}$ is in $L_2(\sph_{2})$, we use the asymptotics of the coefficients of the Legendre series representation of the Arccos function raised to a fractional power obtained from Lemma~\ref{lem_arccosSeriesRep},  the asymptotics of $N(d,n)$ given in (\ref{lem_orderOfNdn}), and the asymptotics of the coefficients $b_n^{(H)}$ from Lemma~\ref{lem_RKHSConstruction_auxFunction}. Using Lemma~\ref{lem_productIntegration} in the first step, we obtain
    \begin{align}
        \int_{\sph_{d-1}} \ell_{\delta,\alpha}^{(H)}(\xi)^2 \dd\sigma(\xi)
        & = \delta^{-2(H+\alpha)} 2\pi^{-2H}  16^{2(H+\alpha)} \sum_{n\in\N_0^\#} \frac{\left[b_n^{((H+\alpha)/2)}\right]^4}{\anbr{H}^2 N(d,n)} \notag \\
        & \leq \delta^{-2(H+\alpha)} 2\pi^{-2H} 16^{2(H+\alpha)} \sum_{n\in\N^\#} c \; \; \frac{n^{-2(H+\alpha) + 2(d-3)}}{n^{-2H+d-3} n^{d-2}} \notag \\
        & = \delta^{-2(H+\alpha)} 2\pi^{-2H} 16^{2(H+\alpha)} \sum_{n\in\N^\#} c \; n^{-1-2\alpha} < \infty. \label{eqn:estimater34}
    \end{align}
    Note that we switched to $\N^\# := \N_0^\# \setminus\{0\}$ in the summation index to avoid dividing by zero. The constant may be increased to compensate for this. Furthermore, note that the constant may depend on $\alpha$, $H$ and $d$, but not on $\delta$. 
    
    \textit{Step 3: Integral of $\ell_{\delta,\alpha}^{(H)}$ w.r.t.\ the RKHS kernel:} Using the definition of $\ell_{\delta,\alpha}^{(H)}$ in the first step, the definition of $g_{H}$ in the second, Lemma~\ref{lem_productIntegration} in the third,
    and (\ref{eq_LegendreSeriesFH}) in the fourth, we obtain
    \begin{align*}
        \int_{\sph_{d-1}} & m_\eta^{(H)}(\xi) \ell_{\delta,\alpha}^{(H)}(\xi) \dd\sigma(\xi) \\
        & = - \delta^{-(H+\alpha)} \frac{\sqrt{2}\pi^{-H} 16^{(H+\alpha)}}{\sqrt{\vol{d-1}}} \frac{\pi^H}{\sqrt{2}}  \int_{\sph_{d-1}} \left[ g_{H}(\abrac{\eta,\xi} - g_{H}(\abrac{O,\xi}) \right]  \sum_{n\in\N_0^\#}  \frac{ \bnbr{\hah}^2 }{\an{H}} P_n(\abrac{O,\xi}) \dd\sigma(\xi) \\
        & = - \delta^{-(H+\alpha)} \frac{16^{(H+\alpha)}}{\vol{d-1}} \\
        & \qquad \cdot \int_{\sph_{d-1}} \left[ \sum_{n=0}^\infty  \an{H} P_n(\abrac{\eta,\xi}) - \sum_{n=0}^\infty  \an{H} P_n(\abrac{O,\xi}) \right]  \sum_{n\in\N_0^\#}  \frac{ \bnbr{\hah}^2 }{\an{H}} P_n(\abrac{O,\xi}) \dd\sigma(\xi) \\
        & =
        - \delta^{-(H+\alpha)} 16^{(H+\alpha)} \sum_{n\in\N_0^\#}  \frac{\bnbr{\hah}^2 }{N(d,n)} \left[ P_n(\abrac{O,\eta}) - P_n(\abrac{O,O})\right]\\
        & = - \delta^{-(H+\alpha)} 16^{(H+\alpha)} \left[ G^{(\hah)}(\abrac{O,\eta}) - G^{(\hah)}(1) \right] \\
        & = \delta^{-(H+\alpha)} 16^{(H+\alpha)} \left[ G^{(\hah)}(1) - G^{(\hah)}(\abrac{O,\eta}) \right] \\
        & = \fda(\eta) .
    \end{align*}
    Thus, $\fda$ is in the RKHS of SFBM. 

    \textit{Step 4: Verification of the properties.} 
    \ref{RKHS_SFBM_fda_d3_prop_normbound} We want to show that $\delta^{2(H+\alpha)} \norm{\fda}_\mathbb{H}^2$ is uniformly bounded in $\delta$. In (\ref{eqn:estimater34}), we calculated that (using also (\ref{eq_RKHSNormBoundEll}))
    \begin{align*}
        \norm{\fda}_\mathbb{H}^2 \leq \int_{\sph_{d-1}} \ell_{\delta,\alpha}^{(H)}(\xi)^2 \dd\sigma(\xi)
        & \leq \delta^{-2(H+\alpha)} 2\pi^{-2H} 16^{2(H+\alpha)} \sum_{n\in\N^\#} c \; n^{-1-2\alpha}.
    \end{align*}
    Multiplication by $\delta^{2(H+\alpha)}$ removes all dependence on $\delta$ and shows the statement.
    
    \ref{RKHS_SFBM_fda_d3_prop_incr} We want to show that $f_{\delta,\alpha}(\eta) \geq 1$ for all $\eta \in\sph_{d-1}\setminus \bb_\delta(O)$. Using elementary methods, one can verify that $1-\cos(x)\geq \frac{x^2}{16}$ for any $x\in[0,\pi]$. Thus,
    \begin{align}\label{eq_GHRoughEstimate}
        \delta^{-2H} \left( 2^H + (1-\cos(\delta))^H - (1+\cos(\delta))^H \right)
        \geq \delta^{-2H} \left( 2^H + \frac{\delta^{2H}}{16^{2H}} - 2^H \right) = 16^{-2H}.
    \end{align}
    For $\eta\in \sph_{d-1}\setminus \bb_\delta(O)$ we have $\abrac{O,\eta} \leq \cos{\delta}$ and therefore
    \begin{align*}
        \fda(\eta) & = 16^{(H+\alpha)} \left[ G^{((H+\alpha)/2)}(1) - G^{((H+\alpha)/2)}(\abrac{\eta,O}) \right] \delta^{-(H+\alpha)} \\
        & = 16^{(H+\alpha)} \left( 2^{(H+\alpha)/2} + (1-\abrac{\eta,O})^{(H+\alpha)/2} - (1+\abrac{\eta,O})^{(H+\alpha)/2} \right) \delta^{-(H+\alpha)} \\
        & \geq 16^{(H+\alpha)} \left( 2^{(H+\alpha)/2} + (1-\cos(\delta))^{(H+\alpha)/2} - (1+\cos(\delta))^{(H+\alpha)/2} \right) \delta^{-(H+\alpha)} \geq 1,
    \end{align*}
    using (\ref{eq_GHRoughEstimate}).
\end{proof}

\section{Proof of the main result}\label{sec_proofMain}

\subsection{Preliminaries}\label{sec_proofPrelim}

In this subsection, we introduce a fundamental result from Riemannian geometry. It is a geometric comparison inequality that we combine with a comparison inequality for stochastic processes to obtain lower bounds for the persistence probability.

The geometric comparison inequality in its full generality was published in \cite{toponogov59} by Toponogov in 1959. It is more accessibly stated as Theorem~12.2.2 in \cite{petersen2016} or as Theorem~IX.5.1 in \cite{chavel2006}. The full statement is phrased in terms of Riemann geometry and variable curvature, but since we are just dealing with the sphere, which has constant curvature, we only present a simplified version.

To state Toponogov's Theorem, we need to define comparison triangles. Let us first give a heuristic description.

\begin{figure}[!ht]
    \centering
    \includegraphics[scale=0.7]{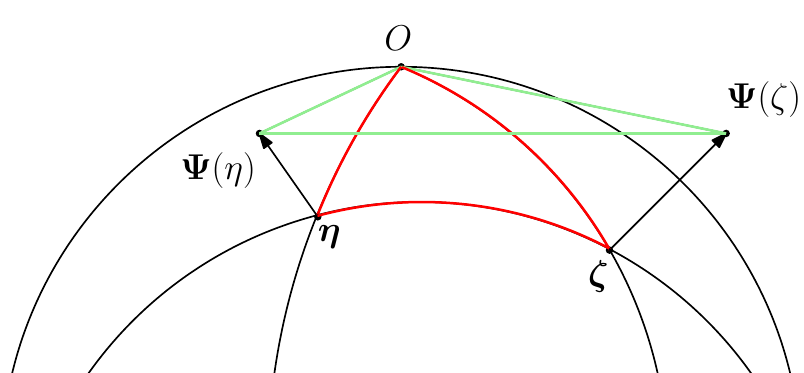}
    \caption{Comparison Triangles}
    \label{img_toponogov_lineGraphic}
\end{figure}

Imagine a Euclidean $(d-1)$-dimensional space tangential at the point $O$. In Figure \ref{img_toponogov_lineGraphic}, one can see a red spherical triangle, which consists of points $(\eta,O,\zeta)$. Using a function $\Psi$, we want to map it to the green triangle, which lies in the Euclidean space tangential to the sphere at $O$, and which consists of the points $(\Psi(\eta), 0, \Psi(\zeta))$. Note that we map $O$ to the origin $0\in\R^{d-1}$. We require the green triangle to preserve certain aspects of the red triangle: We want to preserve the lengths $\norm{\Psi(\eta)}_2 = d(\eta,O)$ and $\norm{\Psi(\zeta)}_2 = d(\zeta,O)$, and we want the angle of both triangles at $O$ and $0$, respectively, to be equal. The important question is: What happens to the distance $d(\eta,\zeta)$ compared to the distance $\norm{\Psi(\eta)-\Psi(\zeta)}_2$? This is answered by Toponogov's Theorem.

For more details on comparison triangles (also referred to as comparison hinges), see Chapter~11.2 in \cite{petersen2016}. The projection that we have just described heuristically is rigorously defined in Riemannian geometry as the inverse of the so-called exponential map $\exp_O: \R^{d-1} \to \sph_{d-1}$. For more general information on exponential maps, see \cite[Chapter~5.5.1]{petersen2016}.

We resort to an easier description of the projection in terms of linear algebra. Recall that the half-sphere centred at a point $\eta\in\sph_{d-1}$ is defined by $\hh(\eta)$, cf.\ (\ref{def_halfSphere}).

\begin{defi}\label{def_aeprojection} 
    The function $\Psi: \hh(O) \to \{x\in\R^{d-1} : \norm{x}_2\leq \pi/2\}$ is defined as follows:
    \begin{itemize}
        \item $\Psi(O) = 0 \in \R^{d-1}$
        \item Let $P:\operatorname{span}(O) \oplus \operatorname{span}(O)^\bot \to \R^{d-1}$ be the natural projection onto the orthogonal complement of the space spanned by $O$ denoted by $\operatorname{span}(O)^\bot \cong \R^{d-1}$. 
        
        Let $\eta\in\hh(O)\setminus O$. Then the space spanned by $O$ and $\eta$ in $\R^d$ is two-dimensional. The one-dimensional space orthogonal to $O$ in $\spann(\{O,\eta\})$ contains exactly two normalised vectors. Let $v_\eta'$ be the one with $\abrac{\eta,v_\eta'} > 0$ and define 
        \begin{align*}
            \Psi(\eta) := d(\eta,O)\; v_\eta 
        \end{align*}
        with $v_\eta := P v_\eta'$. Note that $\norm{v_\eta}_2 = \norm{v_\eta'}_2 = 1$, since $v_\eta'$ is orthogonal to $O$.
    \end{itemize}
    We call $\Psi$ the azimuthal equidistant projection.
\end{defi}

With the projection at hand, we can define triangles and comparison triangles.

\begin{defi}
    Let $(\eta,O,\zeta)$ be pairwise different with $\eta,\zeta\in\hh(O)$. We call $(\eta,O,\zeta)$ a spherical triangle. The triangle defined by $(\Psi(\eta), 0, \Psi(\zeta))$ with $\Psi$ as in Definition~\ref{def_aeprojection} in the Euclidean space $\R^{d-1}$ is called the comparison triangle w.r.t.\ $(\eta,O,\zeta)$.
\end{defi}

We are now ready to state our simplified version of Toponogov's Theorem.

\begin{thm}\label{thmToponogov}
    Given any triangle $(\eta, O, \zeta)$ on $\hh(O)$, the distance between the outlying points $d(\eta,\zeta)$ is smaller than that of the respective points in the comparison triangle $(\Psi(\eta),0, \Psi(\zeta))$, i.e.
    \begin{align*}
        d(\eta,\zeta) \leq \norm{\Psi(\eta) - \Psi(\zeta)}_2.
    \end{align*}
\end{thm}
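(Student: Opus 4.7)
The plan is to reduce the statement to a purely trigonometric inequality via laws of cosines, and then to verify that inequality by a one-variable convexity argument.

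First, I would set $a := d(\eta,O)$ and $b := d(\zeta,O)$, both lying in $[0,\pi/2]$ by the half-sphere assumption. Writing $\eta = \cos(a)\,O + \sin(a)\,u$ and $\zeta = \cos(b)\,O + \sin(b)\,v$ with unit vectors $u,v \in O^\perp\subset\R^d$, one reads off from Definition~\ref{def_aeprojection} that $\Psi(\eta) = a\,u$ and $\Psi(\zeta) = b\,v$, after identifying $O^\perp$ with $\R^{d-1}$. Setting $\theta := \arccos\langle u,v\rangle \in [0,\pi]$, which is simultaneously the angle of the spherical triangle at $O$ and of the comparison triangle at $0$, the spherical and Euclidean laws of cosines give
\[
    \cos d(\eta,\zeta) \;=\; \cos a \cos b + \sin a \sin b \cos\theta, \qquad \norm{\Psi(\eta)-\Psi(\zeta)}_2 \;=\; \sqrt{a^2 + b^2 - 2ab\cos\theta}.
\]

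Since both $d(\eta,\zeta)$ and $\norm{\Psi(\eta)-\Psi(\zeta)}_2$ lie in $[0,\pi]$ (the latter because $a+b \leq \pi$), the monotonicity of $\cos$ on $[0,\pi]$ reduces the theorem to the trigonometric inequality
\[
    \cos a \cos b + \sin a \sin b \cos\theta \;\geq\; \cos\sqrt{a^2 + b^2 - 2ab\cos\theta}, \qquad a,b\in[0,\pi/2],\; \theta\in[0,\pi].
\]

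To establish this, I would fix $a,b$ and view both sides as functions of $t := \cos\theta \in [-1,1]$. The left-hand side is affine in $t$. For the right-hand side $g(t) := \cos u(t)$ with $u(t):=\sqrt{a^2+b^2-2abt}$, a direct computation yields
\[
    g''(t) \;=\; -\,\frac{a^2 b^2\,\bigl(u\cos u - \sin u\bigr)}{u^3}.
\]
The auxiliary function $h(u) := u\cos u - \sin u$ satisfies $h(0) = 0$ and $h'(u) = -u\sin u \leq 0$ on $[0,\pi]$, hence $h\leq 0$ there, so $g''\geq 0$ and $g$ is convex on $[-1,1]$. A direct evaluation at $t = \pm 1$ shows that the two sides agree there, reducing to $\cos(a-b) = \cos|a-b|$ and $\cos(a+b) = \cos(a+b)$ respectively (the last identity using $a+b\leq\pi$). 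A convex function on $[-1,1]$ taking the same boundary values as a prescribed affine function lies below the affine function throughout, which is exactly the required inequality.

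The main obstacle is the convexity computation for $g$; once the sign of $u\cos u-\sin u$ on $[0,\pi]$ is pinned down, the rest is bookkeeping. Degenerate configurations ($a=0$, $b=0$, or $\theta\in\{0,\pi\}$ collapsing the comparison triangle) either produce immediate equalities or are handled by continuity.
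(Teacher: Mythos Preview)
Your proof is correct. The paper, however, does not prove this statement at all: it is stated as the special case of Toponogov's comparison theorem (curvature bounded below by $0$) and simply cited from the Riemannian-geometry literature \cite{toponogov59,petersen2016,chavel2006}.

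Your route is genuinely different and more elementary. By writing $\eta=\cos a\,O+\sin a\,u$ and $\zeta=\cos b\,O+\sin b\,v$ you unpack Definition~\ref{def_aeprojection} exactly, so the spherical and Euclidean cosine laws reduce the claim to the scalar inequality $\cos a\cos b+\sin a\sin b\,t\geq\cos\sqrt{a^2+b^2-2abt}$ on $t\in[-1,1]$; the convexity of the right-hand side (via $h(u)=u\cos u-\sin u\leq 0$ on $[0,\pi]$) together with equality at $t=\pm1$ finishes it. This avoids all Riemannian machinery and gives a self-contained argument tailored to the constant-curvature sphere, which is all the paper needs. The trade-off is that the cited Toponogov theorem works for any complete Riemannian manifold with sectional curvature bounded below, whereas your argument is specific to $\sph_{d-1}$ versus $\R^{d-1}$; for the purposes of this paper that loss of generality is irrelevant.
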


In next lemma summarizes the properties of the projection $\Psi$ that we will need in the sequel.

\begin{lem}\label{lem_aepProperties}
    The azimuthal equidistant projection $\Psi$ given by Definition~\ref{def_aeprojection} has the following properties:
    \begin{enumerate}
        \item $\Psi$ is a bijection from $\hh(O)$ onto $\{x\in\R^{d-1} : \norm{x}_2 \leq \pi/2\}$, \label{lem_aepProperties_bij}
        \item $d(O,\eta) = \norm{\Psi(\eta)}_2$ for any $\eta\in\hh(O)$, \label{lem_aepProperties_isom}
        \item $d(\eta,\zeta) \leq \norm{\Psi(\eta)-\Psi(\zeta)}_2 $ for any $\eta,\zeta \in\hh(O)$. \label{lem_aepProperties_top}
    \end{enumerate}
\end{lem}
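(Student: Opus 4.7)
The plan is to verify the three properties in order, leaning on Theorem~\ref{thmToponogov} for part (c) and on a direct decomposition argument for parts (a) and (b).

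Property (b) is essentially a restatement of the definition. By Definition~\ref{def_aeprojection}, either $\eta=O$ and both sides are zero, or $\Psi(\eta)=d(\eta,O)\,v_\eta$ with $v_\eta$ a unit vector in $\R^{d-1}$, giving $\norm{\Psi(\eta)}_2=d(\eta,O)\cdot 1$ immediately. As a byproduct, since $\eta\in\hh(O)$ means $d(\eta,O)\leq\pi/2$, the image of $\Psi$ lies in the stated Euclidean ball, which already handles part of (a).

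For property (a) I would prove injectivity and surjectivity via the explicit parametrisation of the half-sphere by a geodesic distance and a departure direction. The key identity is that for $\eta\in\hh(O)\setminus\{O\}$, the unit vector $v_\eta'$ of Definition~\ref{def_aeprojection} allows the decomposition
\[
  \eta \;=\; \cos\bigl(d(\eta,O)\bigr)\,O \;+\; \sin\bigl(d(\eta,O)\bigr)\,v_\eta',
\]
which follows from $\eta\in\spann(\{O,v_\eta'\})$, $\norm{\eta}_2=1$, $\abrac{\eta,O}=\cos(d(\eta,O))$, and $\abrac{\eta,v_\eta'}>0$. For injectivity, from $\Psi(\eta)$ one recovers $d(\eta,O)$ as its Euclidean norm by (b), and $v_\eta'$ as the preimage under $P$ of its direction (noting $v_\eta'\in\spann(O)^\bot$ so $Pv_\eta'=v_\eta'$ under the identification $\R^{d-1}\cong\spann(O)^\bot$); the displayed formula then fixes $\eta$ uniquely. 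For surjectivity, given $x\in\R^{d-1}$ with $\norm{x}_2\leq\pi/2$, set $\theta:=\norm{x}_2$ and let $w\in\spann(O)^\bot$ be the unit vector with $Pw=x/\theta$ (or $x$ itself if $\theta=0$); then $\eta:=\cos(\theta)O+\sin(\theta)w$ is a unit vector with $\abrac{\eta,O}=\cos\theta\geq 0$, hence $\eta\in\hh(O)$, and a direct check using the definition yields $\Psi(\eta)=x$.

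Property (c) is the direct application of Toponogov's Theorem (Theorem~\ref{thmToponogov}): the triangle $(\eta,O,\zeta)$ sits in $\hh(O)$ by assumption, and $(\Psi(\eta),0,\Psi(\zeta))$ is by construction a comparison triangle, since by (b) the two side lengths at $O$ match those at $0$, and the angle at $O$ between the initial geodesic directions towards $\eta$ and $\zeta$ equals the angle between $v_\eta'$ and $v_\zeta'$ (and hence between $\Psi(\eta)$ and $\Psi(\zeta)$) by the very choice of $\Psi$. The degenerate cases $\eta=O$, $\zeta=O$, or $\eta=\zeta$ are trivial. The only mildly delicate point is confirming that the azimuthal projection genuinely realises the ``comparison triangle'' notion used in Toponogov's statement, which reduces to checking angle preservation at the base point; this is immediate from the fact that $\Psi$ is radial along geodesics emanating from $O$ with isometric radial parameter.
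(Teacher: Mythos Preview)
Your proposal is correct and follows essentially the same approach as the paper, which simply states that bijectivity is verified by elementary methods, property~\ref{lem_aepProperties_isom} is evident from the construction, and property~\ref{lem_aepProperties_top} is a direct application of Theorem~\ref{thmToponogov}. You have merely spelled out the elementary verification (the decomposition $\eta=\cos(d(\eta,O))\,O+\sin(d(\eta,O))\,v_\eta'$ and the angle-preservation check) that the paper leaves implicit.
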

\begin{proof}
    One may verify the bijectivity using elementary methods, Property \ref{lem_aepProperties_isom} is evident from the construction, and Property \ref{lem_aepProperties_top} is a direct application of Theorem~\ref{thmToponogov}.
\end{proof}

The reason we need this geometric comparison theorem is to use it in combination with the following well-known stochastic comparison inequality. It is a statement about Gaussian processes and its discrete version is originally due to Slepian (cf.\ \cite{Slepian62}). The continuous-time analogue we are applying can be found in \cite[Lemma 1.2.5]{BaumgartenThesis2013}.
\begin{lem}[Slepian's lemma]\label{lem_Slepian}
    Let $(E,d)$ be a metric space and let $(X_t)_{t\in E}$ and $(Y_t)_{t\in E}$ denote two real-valued, separable, centred Gaussian processes, such that
    \begin{align*}
        \expec{X_t^2} & = \expec{Y_t^2}, \quad \forall t\in E,\\
        \expec{X_s X_t} & \leq \expec{Y_s Y_t}, \quad \forall s,t\in E.
    \end{align*}
    Let $f: E\to \R$ be a measurable function that is continuous except for countably many points. Then
    \begin{align*}
        \prob{X_t \leq f(t) \quad \forall t\in E} \quad \leq \quad \prob{Y_t \leq f(t) \quad \forall t\in E}.
    \end{align*}
\end{lem}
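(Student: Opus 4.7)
The plan is the classical Gaussian interpolation argument, combined with a separability reduction to handle the general index set $E$.

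\textbf{Step 1: Reduction to finite index sets.} Because both processes are separable, there exists a countable set $E_0\subseteq E$ such that the events $\{X_t\leq f(t)\ \forall t\in E\}$ and $\{X_t\leq f(t)\ \forall t\in E_0\}$ coincide up to a null set, and likewise for $Y$. The hypothesis on $f$ (continuous except at countably many points) ensures one may enlarge $E_0$ to still be countable while containing all discontinuities of $f$, so that no information is lost when restricting to $E_0$. Writing $E_0=\{t_1,t_2,\dots\}$ and $E_n=\{t_1,\dots,t_n\}$, we have
\begin{equation*}
\prob{X_t\leq f(t)\ \forall t\in E} \;=\; \lim_{n\to\infty} \prob{X_{t_i}\leq f(t_i),\ i=1,\dots,n},
\end{equation*}
by monotone convergence (the events are decreasing in $n$), and the same for $Y$. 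Thus it suffices to prove the statement for finite index sets.

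\textbf{Step 2: Gaussian interpolation.} Fix $n$ and set $x_i=f(t_i)$. Take an independent copy of the process $Y$ (still denoted $Y$) and, on an enlarged probability space, define for $\lambda\in[0,1]$ the Gaussian vector
\begin{equation*}
Z_i(\lambda) \;:=\; \sqrt{1-\lambda}\,X_{t_i} \;+\; \sqrt{\lambda}\,Y_{t_i}, \qquad i=1,\dots,n.
\end{equation*}
By the equal-variance hypothesis, $\V[Z_i(\lambda)]=\V[X_{t_i}]=\V[Y_{t_i}]$ is independent of $\lambda$, whereas the covariances satisfy
\begin{equation*}
\cov{Z_i(\lambda)}{Z_j(\lambda)} \;=\; (1-\lambda)\,\E[X_{t_i}X_{t_j}] + \lambda\,\E[Y_{t_i}Y_{t_j}].
\end{equation*}
Let $\varphi_\lambda$ denote the Gaussian density of $(Z_1(\lambda),\dots,Z_n(\lambda))$ and put
\begin{equation*}
F(\lambda) \;:=\; \prob{Z_i(\lambda)\leq x_i,\ i=1,\dots,n} \;=\; \int_{\prod(-\infty,x_i]}\varphi_\lambda(z)\,\dd z.
\end{equation*}
A direct computation (Plackett's identity, or differentiating under the integral and using that $\varphi_\lambda$ satisfies the diffusion-type relation $\partial_{c_{ij}}\varphi_\lambda=\partial_{z_i}\partial_{z_j}\varphi_\lambda$ for off-diagonal entries of the covariance matrix) yields
\begin{equation*}
F'(\lambda) \;=\; \sum_{i<j}\bigl(\E[Y_{t_i}Y_{t_j}]-\E[X_{t_i}X_{t_j}]\bigr)\int \varphi_\lambda^{(i,j)}(x_i,x_j;\,\bar z)\,\dd\bar z,
\end{equation*}
where $\varphi_\lambda^{(i,j)}$ denotes the joint density of $(Z_i(\lambda),Z_j(\lambda))$ conditional on (and integrated against) the remaining coordinates being at most $x_k$. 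The inner integral is manifestly non-negative, and each factor in front is non-negative by hypothesis; hence $F'(\lambda)\geq0$ for every $\lambda\in(0,1)$, so $F(0)\leq F(1)$, which is exactly the finite-dimensional inequality.

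\textbf{Step 3: Passage to the limit.} Combining Steps~1 and 2 gives the inequality on each $E_n$, and letting $n\to\infty$ through the monotone convergence observation yields the result on $E$.

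The main obstacle is Step~2: the sign of $F'(\lambda)$ rests on a Gaussian integration-by-parts formula whose careful justification requires either Plackett's identity for bivariate normal orthant probabilities or the heat-equation identity $\partial_{c_{ij}}\varphi_\lambda=\partial_{z_i}\partial_{z_j}\varphi_\lambda$ together with an integration by parts in the orthant $\prod(-\infty,x_i]$. Once this identity is in place, the argument is a line of inequalities; the separability/measurability reduction in Step~1 is standard but must be done with some care so that the hypothesis that $f$ is continuous outside a countable set is actually used to make the events well-defined and the limits correct.
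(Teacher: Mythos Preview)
The paper does not prove this lemma at all: it is stated as a known result and attributed to Slepian \cite{Slepian62} for the discrete case, with the continuous-time version cited from \cite[Lemma~1.2.5]{BaumgartenThesis2013}. Your sketch therefore goes beyond what the paper does. The argument you outline is the standard one (separability reduction to finite index sets, then the Gaussian interpolation/Plackett-type computation showing monotonicity in the covariances), and it is correct in spirit; a fully rigorous version would need to handle the possibility of degenerate covariance matrices in Step~2 (so that $\varphi_\lambda$ need not be a density), typically by adding a small independent Gaussian perturbation and removing it at the end.
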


This yields an immediate corollary for positively correlated processes, which we frequently apply. The proof is standard; and we omit it.

\begin{cor}\label{cor_SlepianPosCorr}
    Let $(E,d)$ be a metric space and let $(Y_t)_{t\in E}$ denote a real-valued, separable, centred Gaussian process such that $\expec{Y_s Y_t} \geq 0$, $\forall s,t\in E$.
    Let $f: E\to \R$ be a measurable function that is continuous except for countably many points. Let $E = A \cup B$. Then
    \begin{align*}
        \prob{Y_t \leq f(t) ~ \forall t\in E} \quad \geq \quad \prob{Y_t \leq f(t) ~ \forall t\in A} \cdot \prob{Y_t \leq f(t) ~ \forall t\in B}.
    \end{align*}
\end{cor}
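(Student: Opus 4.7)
The plan is to deduce this from Slepian's lemma (Lemma~\ref{lem_Slepian}) by comparing the given process with a ``decoupled'' version that uses independent copies on $A$ and on $B$. The guiding intuition is that non-negative correlations make suprema over different regions easier to control simultaneously, so replacing the cross-correlations between $A$ and $B$ with zero can only decrease the joint persistence probability. This is exactly the direction of the claimed inequality.

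To set things up, I would first pass to the disjoint union $\widetilde E := A \sqcup B$, in which a point $t\in A\cap B$ is listed twice, say as $(t,1)$ and $(t,2)$. Extend $f$ to $\widetilde E$ by $\widetilde f(t,i) := f(t)$. On $\widetilde E$ I would define two separable centred Gaussian processes: the ``coupled'' process $X_{(t,i)} := Y_t$, and the ``decoupled'' process $X'_{(t,1)} := Y^{(1)}_t$ for $t\in A$, $X'_{(t,2)} := Y^{(2)}_t$ for $t\in B$, where $Y^{(1)},Y^{(2)}$ are independent copies of $Y$. Both processes share the same marginal variances, namely $\E[Y_t^2]$ at $(t,i)$.

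The covariance comparison is immediate. Within a single block ($i=j$) both processes have covariance $\E[Y_s Y_t]$, while between different blocks ($i\neq j$) the covariance of $X'$ vanishes by independence, but that of $X$ still equals $\E[Y_s Y_t]\geq 0$ by hypothesis. Hence
\begin{align*}
\E\!\left[X'_{(s,i)} X'_{(t,j)}\right] \;\leq\; \E\!\left[X_{(s,i)} X_{(t,j)}\right], \qquad (s,i),(t,j)\in\widetilde E.
\end{align*}
Slepian's lemma applied to $X'$ and $X$ with threshold $\widetilde f$ then yields
\begin{align*}
\prob{X'_s \leq \widetilde f(s)\ \forall s\in\widetilde E} \;\leq\; \prob{X_s \leq \widetilde f(s)\ \forall s\in\widetilde E}.
\end{align*}

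It remains to identify the two sides. The right-hand side equals $\prob{Y_t\leq f(t)\ \forall t\in A\cup B}$, since $X_{(t,i)}=Y_t$ and the constraints at $(t,1)$ and $(t,2)$ coincide for $t\in A\cap B$. The left-hand side factors by independence of $Y^{(1)}$ and $Y^{(2)}$ into the product $\prob{Y_t\leq f(t)\ \forall t\in A}\cdot\prob{Y_t\leq f(t)\ \forall t\in B}$, which gives the corollary. The only mildly delicate point is the bookkeeping around $A\cap B$, which the disjoint-union formalism handles transparently; separability and measurability of $\widetilde f$ transfer from $E$ with no extra work, so no genuine obstacle arises beyond setting up the comparison correctly.
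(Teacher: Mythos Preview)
Your argument is correct and is exactly the standard application of Slepian's lemma that the paper alludes to when it says the proof is standard and omits it. The only cosmetic point is that Lemma~\ref{lem_Slepian} is stated for a metric space, so you might mention in passing that $\widetilde E=A\sqcup B$ can be metrised (e.g.\ keep the original metric within each block and set the distance between the blocks to a fixed positive constant), under which separability of $X,X'$ and the continuity properties of $\widetilde f$ transfer from those of $Y$ and $f$; this is routine and does not affect the argument.
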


The previous corollary is particularly interesting to us, since SFBM is non-negatively correlated, as the next lemma shows. The proof of the lemma is a straightforward application of the triangle inequality for the spherical distance and the  fact that $(a+b)^{2H} \leq a^{2H} + b^{2H}$ for non-negative $a,b$ and $2H \leq 1$ (and we use $H\leq 1/2$).
\begin{lem}\label{posOfCovar}
    For all $\eta,\zeta\in\sph_{d-1}$,
    \begin{align*}
        \expec{S_H(\eta) S_H(\zeta)} = \eh\left[ d(\eta,O)^{2H} + d(\zeta,O)^{2H} - d(\eta, \zeta)^{2H} \right] \geq 0.
    \end{align*}
\end{lem}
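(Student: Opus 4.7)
The equality in the statement is immediate from the definition of SFBM in (\ref{eqn:covspherical}), so the whole task reduces to showing the nonnegativity
\[
d(\eta,O)^{2H} + d(\zeta,O)^{2H} - d(\eta,\zeta)^{2H} \geq 0.
\]
The plan is a two-step chain combining the triangle inequality on the sphere with subadditivity of the map $x\mapsto x^{2H}$.

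First, I would recall that the geodesic distance $d$ on $\sph_{d-1}$ is a metric, so it satisfies the triangle inequality
\[
d(\eta,\zeta) \leq d(\eta,O) + d(O,\zeta).
\]
Since $2H \in (0,1]$ (because we are in the regime $0 < H \leq 1/2$ where SFBM is defined), the function $x\mapsto x^{2H}$ is non-decreasing on $[0,\infty)$, so raising both sides to the $2H$-th power preserves the inequality.

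Second, I would invoke the elementary subadditivity inequality $(a+b)^{2H} \leq a^{2H} + b^{2H}$ valid for all $a,b\geq 0$ whenever $2H \in (0,1]$. Combining the two steps yields
\[
d(\eta,\zeta)^{2H} \leq \bigl(d(\eta,O) + d(O,\zeta)\bigr)^{2H} \leq d(\eta,O)^{2H} + d(\zeta,O)^{2H},
\]
which is exactly the nonnegativity claim after rearrangement and multiplication by $\tfrac{1}{2}$.

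There is essentially no obstacle here: both facts are textbook. The only thing worth flagging is that the argument uses $H \leq 1/2$ in an essential way via subadditivity of $x^{2H}$; for $H>1/2$ the right-hand side in (\ref{eqn:covspherical}) would not even define a covariance kernel (as already noted after that equation), which is consistent with the lemma failing in that regime. The subadditivity of $x \mapsto x^{2H}$ itself is, if needed, a one-line consequence of the concavity of $x^{2H}$ for $2H \in (0,1]$ together with $0^{2H}=0$, so I would either state it as folklore or append a parenthetical remark rather than prove it in detail.
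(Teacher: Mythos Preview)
Your proof is correct and matches the paper's own argument essentially line for line: the paper likewise reduces the claim to the triangle inequality for the spherical distance together with the subadditivity $(a+b)^{2H}\leq a^{2H}+b^{2H}$ for $a,b\geq 0$ and $2H\leq 1$. There is nothing to add.
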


Finally, let us combine stochastic and geometric estimates to obtain the following central lemma. To this end, we recall the notation for antipodal points: If $\eta \in\sph_{d-1}$ then $\ol{\eta}$ denotes the point antipodal to $\eta$. In particular, $\hh(\eta)\cup \hh(\ol{\eta}) = \sph_{d-1}$ always holds.

\begin{lem}\label{lem_combinationTopSlep}
    Let $(S_H(\eta))_{\eta\in\sph_{d-1}}$ be SFBM and let $(B_H(t))_{t\in\R^{d-1}}$ be  FBM on $\R^{d-1}$. Let $\kappa > 0$ be a constant and $M\subseteq \sph_{d-1}$. Then
    \begin{align*}
        \prob{S_H(\eta) \leq \kappa \quad \forall \eta\in M } 
        \quad \geq \quad c ~ \prob{B_H(\Psi(\eta)) \leq \kappa \quad \forall \eta \in M \cap \hh(O) },
    \end{align*}
    where $\Psi$ is the projection from Definition~\ref{def_aeprojection} and where $c > 0$ is a constant depending on $H$ and $d$. If $M\subseteq \hh(O)$ then the inequality simplifies to
    \begin{align*}
        \prob{S_H(\eta) \leq \kappa \quad \forall \eta\in M } 
        \quad \geq \quad \prob{B_H(\Psi(\eta)) \leq \kappa \quad \forall \eta \in M}.
    \end{align*}
\end{lem}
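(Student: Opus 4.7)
The plan is to reduce the statement to two applications of Slepian-type comparisons: first a pathwise domination between SFBM and Euclidean FBM on a single hemisphere, secured by Toponogov's theorem, and then a decoupling of the two hemispheres using the non-negative correlation of SFBM from Lemma~\ref{posOfCovar}. The simpler (unconstanted) assertion in the lemma will come out of the first step directly, and the general statement will follow by combining it with Corollary~\ref{cor_SlepianPosCorr}.

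First I would treat the case $M \subseteq \hh(O)$. Define two centred Gaussian processes on $M$ by $X_\eta := B_H(\Psi(\eta))$ and $Y_\eta := S_H(\eta)$. By Lemma~\ref{lem_aepProperties}(b) we have $\|\Psi(\eta)\|_2 = d(\eta,O)$, so the variances agree: $\E[X_\eta^2] = \|\Psi(\eta)\|_2^{2H} = d(\eta,O)^{2H} = \E[Y_\eta^2]$. Writing out the covariance kernels of Euclidean FBM and SFBM, the covariance inequality required by Slepian's lemma (Lemma~\ref{lem_Slepian}) reduces to $d(\eta,\zeta)^{2H} \leq \|\Psi(\eta)-\Psi(\zeta)\|_2^{2H}$, which is exactly Toponogov's inequality (Lemma~\ref{lem_aepProperties}(c)) raised to the power $2H$. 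Applying Slepian's lemma with the constant function $f\equiv\kappa$ then yields the second (simpler) assertion of the lemma.

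For general $M \subseteq \sph_{d-1}$, I would split $M = A \cup B$ with $A := M \cap \hh(O)$ and $B := M \cap \hh(\ol{O})$. Non-negativity of the SFBM covariances (Lemma~\ref{posOfCovar}) allows me to apply Corollary~\ref{cor_SlepianPosCorr} to obtain
\begin{align*}
\P\bigl(S_H(\eta)\leq\kappa~\forall\eta\in M\bigr) \;\geq\; \P\bigl(S_H(\eta)\leq\kappa~\forall\eta\in A\bigr)\cdot\P\bigl(S_H(\eta)\leq\kappa~\forall\eta\in B\bigr).
\end{align*}
The first factor is bounded below by $\P(B_H(\Psi(\eta))\leq\kappa~\forall\eta\in A)$ via the case already handled (since $A \subseteq \hh(O)$). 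For the second factor the containment $B \subseteq \hh(\ol{O})$ gives
\begin{align*}
\P\bigl(S_H(\eta)\leq\kappa~\forall\eta\in B\bigr) \;\geq\; \P\Bigl(\sup_{\eta\in\hh(\ol{O})} S_H(\eta) \leq \kappa\Bigr) \;=:\; c \;>\; 0,
\end{align*}
where positivity of $c$ follows because $S_H$ has continuous sample paths on the compact set $\hh(\ol{O})$ and $\kappa > 0$; this constant depends on $H$, $d$ (and $\kappa$) but not on $M$, which is all that is needed.

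The only subtle point, and the thing to verify carefully, is the direction of the covariance inequality in the first stage: Toponogov's theorem says that geodesics on the sphere are shorter than the corresponding Euclidean segments under the chart $\Psi$, which translates into SFBM having \emph{larger} covariances than Euclidean FBM pulled back through $\Psi$. This is precisely the correct direction to make SFBM the ``larger'' process in Slepian's lemma, so that its upper-level-set probability dominates. Everything else is bookkeeping.
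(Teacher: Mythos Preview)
Your approach is essentially the paper's: variance equality via Lemma~\ref{lem_aepProperties}\ref{lem_aepProperties_isom}, covariance comparison via Toponogov (Lemma~\ref{lem_aepProperties}\ref{lem_aepProperties_top}), Slepian on the upper hemisphere, and Corollary~\ref{cor_SlepianPosCorr} to decouple the two hemispheres. Two points need attention.

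First, your constant $c=\P\bigl(\sup_{\eta\in\hh(\ol{O})}S_H(\eta)\leq\kappa\bigr)$ depends on $\kappa$, whereas the lemma asserts $c=c(H,d)$ only. This is not cosmetic: in the application (Lemma~\ref{lem_mainLowerBound}) one takes $\kappa=\eps\to 0$, so a $\kappa$-dependent constant is useless there. The fix is immediate: since $\kappa>0$, bound your $c$ from below by $\P\bigl(\sup_{\eta\in\hh(\ol{O})}S_H(\eta)\leq 0\bigr)$, which is exactly the constant the paper uses.

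Second, the positivity of this constant is not a consequence of ``continuous sample paths on a compact set and $\kappa>0$''. Continuity only tells you the supremum is almost surely finite; it says nothing about the probability that the supremum lies below a given level. The paper treats this as a separate result (Lemma~\ref{lem:constantpositive} in the appendix): one uses a quantitative Kolmogorov--Chentsov bound to control the H\"older constant of $S_H$ with high probability, covers $\hh(\ol{O})$ by finitely many small geodesic balls, and then combines Corollary~\ref{cor_SlepianPosCorr} with the fact that each $S_H(\zeta_i)$ is a nondegenerate centred Gaussian (with variance at least $(\pi/2)^{2H}$ on $\hh(\ol{O})$) to conclude.
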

\begin{proof}
   Lemma~\ref{lem_aepProperties} \ref{lem_aepProperties_isom} allows us to infer that,  for all $\eta \in\hh(O)$,
   \begin{align*}
       \expec{S_H(\eta)^2} = d(\eta,O)^{2H} = \norm{\Psi(\eta)}_2^{2H} = \expec{B_H(\Psi(\eta))^2}, 
   \end{align*}
   and, using Lemma~\ref{lem_aepProperties} \ref{lem_aepProperties_top} we obtain that for all $\eta,\zeta\in\hh(O)$
    \begin{align*}
        \expec{S_H(\eta) S_H(\zeta)} 
        & = \eh \left( d(\eta, O)^{2H} + d(\zeta,O)^{2H} - d(\eta,\zeta)^{2H} \right)& \\
        & \geq \eh \left( \norm{\Psi(\eta)}_2^{2H} + \norm{\Psi(\zeta)}_2^{2H} - \norm{\Psi(\eta)-\Psi(\zeta)}_2^{2H} \right) & \\
        & \geq \expec{B_H(\Psi(\eta)) B_H(\Psi(\zeta))}.
    \end{align*}   
    
    Therefore, we can first apply Corollary~\ref{cor_SlepianPosCorr} and after that Lemma~\ref{lem_Slepian} to infer that
    \begin{align*}
        & \prob{S_H(\eta) \leq \kappa \; \forall \eta\in M} \\
        & \geq \prob{S_H(\eta) \leq \kappa \; \forall \eta\in M \cap \hh(\ol{O})} \cdot \prob{S_H(\eta) \leq \kappa \; \forall \eta\in  M \cap \hh(O)} \\
        & \geq \prob{S_H(\eta) \leq 0 \; \forall \eta\in \hh(\ol{O})} \cdot \prob{B_H(\Psi(\eta)) \leq \kappa \; \forall \eta\in M \cap \hh(O)}.
    \end{align*}
    In the case $M\subseteq \hh(O)$, the first factor after the first inequality is simply equal to $1$, which yields the second statement in the lemma.

The first statement also follows from the last display noticing that the constant $c:=\prob{S_H(\eta) \leq 0 \; \forall \eta\in \hh(\ol{O})}$ is strictly positive (cf.\ Lemma~\ref{lem:constantpositive}).
\end{proof}

\subsection{Lower bound}\label{sec_lowerBound}

Given all of the preparatory work, the proof of the lower bound is now rather easy.

\begin{lem}\label{lem_mainLowerBound}
    Let $(S_H(\eta))_{\eta\in \sph_{d-1}}$ be spherical fractional Brownian motion on $\sph_{d-1}$. Then
    \begin{align*}
        \prob{\sup_{\eta\in \sph_{d-1}} S_H(\eta) \leq \eps} \geq c ~ \eps^{\frac{d-1}{H}} \left|\log(\eps) \right|^{-\frac{d-1}{2H}}
    \end{align*}
    holds for some constant $c >0$ depending on $H$ and $d$, and $\eps>0$ small enough.
\end{lem}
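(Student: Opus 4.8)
The plan is to reduce the spherical persistence probability to a Euclidean one via the geometric--stochastic comparison already prepared in Section~\ref{sec_proofPrelim}, and then to invoke a precise (poly-logarithmic) lower bound for the persistence of Euclidean FBM; the geometry of $\sph_{d-1}$ enters only at the first step. Concretely, I would apply Lemma~\ref{lem_combinationTopSlep} with $M=\sph_{d-1}$ and $\kappa=\eps$. Since $M\cap\hh(O)=\hh(O)$ and, by Lemma~\ref{lem_aepProperties}\,\ref{lem_aepProperties_bij}, the azimuthal equidistant projection $\Psi$ maps $\hh(O)$ bijectively onto the Euclidean ball $\{x\in\R^{d-1}:\norm{x}_2\le\pi/2\}$, this gives
\begin{align*}
    \prob{\sup_{\eta\in\sph_{d-1}}S_H(\eta)\le\eps}\ \ge\ c\,\prob{\sup_{\norm{x}_2\le\pi/2}B_H(x)\le\eps},
\end{align*}
where $c=\prob{S_H\le 0\text{ on }\hh(\ol O)}>0$ depends only on $H$ and $d$. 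This is the only place where Toponogov's theorem, Slepian's lemma and the non-negativity of the covariance of SFBM are used.

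It then remains to lower bound the Euclidean quantity on the right. Here I would first pass from the ball to the enclosing cube: since $\{\norm{x}_2\le\pi/2\}\subseteq[-\pi/2,\pi/2]^{d-1}$, enlarging the index set only decreases the probability that the supremum stays below $\eps$, so the right-hand side is at least $c\,\prob{\sup_{x\in[-\pi/2,\pi/2]^{d-1}}B_H(x)\le\eps}$; the self-similarity $(B_H(\tfrac{\pi}{2}x))_{x}\deq((\tfrac{\pi}{2})^HB_H(x))_x$ rewrites this as $c\,\prob{\sup_{x\in[-1,1]^{d-1}}B_H(x)\le(2/\pi)^H\eps}$. Plugging in a lower bound of the shape $\prob{\sup_{x\in[-1,1]^{d-1}}B_H(x)\le\delta}\ge c\,\delta^{(d-1)/H}\abs{\log\delta}^{-(d-1)/(2H)}$ for small $\delta$, with $\delta=(2/\pi)^H\eps$ (and $\abs{\log\delta}\le2\abs{\log\eps}$ for $\eps$ small), then yields the claim.

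The main obstacle is supplying that last input: the cited result (\ref{eqn:molchanddim}) only controls the rate up to a factor $\eps^{o(1)}$, whereas I need the sharp poly-logarithmic lower bound over a $(d-1)$-dimensional cube containing the origin. I would extract it from the proof of the lower bound in \cite[Theorem~3]{molchan99}, whose discretisation combined with Gaussian-correlation and Slepian-type inequalities in fact loses only a factor of order $\abs{\log\eps}^{-(d-1)/(2H)}$; alternatively this Euclidean lower bound can be reproved directly by the same scheme. After that, only bookkeeping of constants remains, so the lemma follows as stated.
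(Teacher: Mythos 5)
Your proposal is correct and follows essentially the same route as the paper: apply Lemma~\ref{lem_combinationTopSlep} with $M=\sph_{d-1}$ and $\kappa=\eps$, rescale by self-similarity, and invoke a sharp Euclidean lower bound from \cite{molchan99}. The one ingredient you flag as the ``main obstacle'' is exactly what the paper supplies by citing Lemma~4 of \cite{molchan99}, which gives the poly-logarithmic lower bound $c\,\eps^{(d-1)/H}\abs{\log\eps}^{-(d-1)/(2H)}$ directly over a Euclidean ball, so your detour through the enclosing cube and re-derivation from the proof of Molchan's Theorem~3 is unnecessary.
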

\begin{proof}
    We first apply Lemma~\ref{lem_combinationTopSlep} with $M=\sph_{d-1}$ and $\kappa=\eps$ to obtain
    \begin{align*}
         \prob{\sup_{\eta\in \sph_{d-1}} S_H(\eta) \leq \eps} 
        & \geq c ~ \prob{\sup_{\eta\in \hh(O)} B_H(\Psi(\eta)) \leq \eps} \\
        & = c ~ \prob{\sup_{ t\in\R^{d-1}: \norm{t}_2 < \frac{\pi}{2} } B_H(t) \leq \eps} \\
        & = c ~ \prob{\sup_{ t\in\R^{d-1}: \norm{t}_2 < \frac{\pi}{2}\eps^{-\frac{1}{H}} } B_H(t) \leq 1},
    \end{align*}
    where the last equality uses the self-similarity property of FBM. The last probability can be estimated further by Lemma~4 in~\cite{molchan99}: For a constant $c=c(d,H)$,
    \begin{align*}
        \prob{\sup_{ t\in\R^{d-1}: \norm{t}_2 < \frac{\pi}{2}\eps^{-\frac{1}{H}} } B_H(t) \leq 1}
        \geq c ~ \eps^{\frac{d-1}{H}} \left|\log(\eps) \right|^{-\frac{d-1}{2H}}.\qquad\qquad \qedhere
    \end{align*}
\end{proof}

\subsection{Upper bound}\label{sec_upperBound}
The proof of the upper bound is slightly more involved than the proof of the lower bound. We require two further ingredients. First, the following lemma relates the Gaussian measure of a shifted event with that of the unshifted event, where the shift is from the RKHS. The lemma was proven as Proposition~3.1 in~\cite{aurder13}. Here, we apply a slightly corrected formulation, which can be found as Proposition~3 in~\cite{buck2017}.
\begin{lem}\label{lemRKHSBound}
    Let $X$ be some centred Gaussian process with RKHS denoted by $\mathbb{H}$. Let $\norm{.}$ be the norm in $\mathbb{H}$. Then for each $f\in\mathbb{H}$ and each measurable $A$ such that $\prob{X\in A}\in(0,1)$, we have
    \begin{align}\label{eq_RKHSBound_lower}
        e^{-\sqrt{2\norm{f}^2 \log( 1/ \prob{X\in A} )}  - \norm{f}^2 / 2} \prob{X\in A} \leq \prob{X+f\in A}.
    \end{align}
    If $\norm{f}^2 < 2\log( 1/ \prob{X\in A} ) )$ we additionally have
    \begin{align*} 
        \prob{X+f\in A} \leq e^{\sqrt{2\norm{f}^2 \log( 1/ \prob{X\in A} )}  - \norm{f}^2/2} \prob{X\in A}.
    \end{align*}
\end{lem}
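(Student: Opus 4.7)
The proof rests on the Cameron--Martin shift formula for centred Gaussian processes. The plan is to introduce the stochastic linear functional $L=L_f$ associated with $f$, namely the unique element of the $L_2$-closure of the Gaussian Hilbert space generated by $\{X_s\}$ for which $\covar(L,X_s)=f(s)$ for every $s$. By construction $L\sim \NN(0,\norm{f}^2)$, and the Cameron--Martin identity reads
\[
    \P(X+f \in A) = \E\!\left[\ind_{\{X\in A\}}\, e^{L - \norm{f}^2/2}\right].
\]
Writing $\sigma:=\norm{f}$, $p:=\P(X\in A)$ and $q:=\P(X+f\in A)$, both inequalities in the lemma will be deduced by controlling the conditional behaviour of $L$ on the event $\{X\in A\}$.

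For the lower bound, the plan is to condition on $A$ and apply Jensen's inequality to the exponential:
\[
    q = p\, e^{-\sigma^2/2}\, \E\!\left[e^L \,\middle|\, X\in A\right] \;\geq\; p\, \exp\!\left(\E[L\mid X\in A] - \sigma^2/2\right).
\]
The remaining task is then to show that $\E[L\mid X\in A] \geq -\sqrt{2\sigma^2 \log(1/p)}$. Writing $L=\sigma Z$ with $Z\sim \NN(0,1)$, this becomes a purely Gaussian statement about $\E[Z\,\ind_{\{X\in A\}}]$. The Hardy--Littlewood rearrangement inequality shows that this quantity, taken over all measurable sets of probability $p$, is minimised when $A$ is the lower $p$-quantile half-line of $Z$, giving the sharp estimate $\E[Z\,\ind_{\{X\in A\}}] \geq -\vphi(\Phi^{-1}(p))$, where $\vphi$ and $\Phi$ denote the standard normal density and distribution. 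A Mills ratio computation then yields $\vphi(\Phi^{-1}(p))/p \leq \sqrt{2\log(1/p)}$ for $p$ small, which delivers the desired conditional bound.

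For the upper bound, the naive Jensen step runs in the wrong direction, so a more refined route is needed. One natural approach is interpolation: set $\psi(r) := \log \P(X+rf \in A)$ for $r\in[0,1]$ and differentiate using Cameron--Martin to obtain $\psi'(r) = \E[L - r\sigma^2 \mid X+rf \in A]$. The conditional expectation can be bounded from above by a dual application of the Hardy--Littlewood rearrangement (using the opposite-tail half-line), and integration of $\psi'$ from $0$ to $1$ then produces the claimed upper bound. The hypothesis $\norm{f}^2 < 2\log(1/p)$ enters precisely to keep the Mills ratio estimate non-degenerate and to ensure that the bound is a genuine improvement over the trivial $q\leq 1$.

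The main obstacle will be the precise quantitative control of the conditional Gaussian expectation. The rearrangement step is conceptually transparent, but converting $\vphi(\Phi^{-1}(p))/p$ into the clean form $\sqrt{2\log(1/p)}$ relies on a careful Mills ratio analysis, and arranging the interpolation so that the hypothesis $\norm{f}^2 < 2\log(1/p)$ really forces non-triviality of the upper bound is delicate --- this is presumably the point at which the ``slightly corrected formulation'' of \cite{buck2017} improves on \cite{aurder13}.
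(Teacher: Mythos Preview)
The paper does not prove this lemma; it is quoted from the literature (Proposition~3.1 in \cite{aurder13}, in the corrected form of Proposition~3 in \cite{buck2017}), so there is no in-paper argument to compare against. Your Cameron--Martin framework is exactly the standard route used in those references, and the Jensen step for the lower bound together with the Hardy--Littlewood rearrangement on $\E[Z\,\ind_{\{X\in A\}}]$ is the right mechanism.

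Two points deserve attention. First, you claim the Mills-type inequality $\vphi(\Phi^{-1}(p))/p \leq \sqrt{2\log(1/p)}$ only ``for $p$ small'', but the lower bound in the lemma is stated for every $p\in(0,1)$; the inequality does in fact hold on the whole interval, yet the asymptotic Mills picture you describe does not establish it near $p=1$, so this is a genuine gap in the write-up that must be closed. Second, for the upper bound your interpolation argument is workable but heavier than necessary: a cleaner alternative is to apply Hardy--Littlewood directly to $\E[\ind_{\{X\in A\}}e^{L}]$ rather than to the linear functional, the extremiser being the upper half-line $\{L>\sigma\Phi^{-1}(1-p)\}$. This yields the Borell-type estimate $q\leq 1-\Phi(\Phi^{-1}(1-p)-\sigma)$, from which the stated exponential form follows after a Gaussian tail computation; the hypothesis $\norm{f}^2<2\log(1/p)$ is exactly what makes that computation non-vacuous. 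Either route is valid, but the direct one avoids the differentiation-under-the-shift subtleties you allude to.
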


Furthermore, we require a recent result on the occupation time of SFBM below zero given as Theorem 2 in \cite{aurdorpit2024}. 
\begin{lem} 
\label{lem_occTimeSFBM}
    Let $Z_-$ be the occupation time of SFBM below zero defined by
    \begin{align*}
        Z_-:= \frac{1}{\abs{\sph_{d-1}}} \int_{\sph_{d-1}} \indi{S_H(\xi) \leq 0} \dd\sigma(\xi).
    \end{align*}
    Then $Z_-$ is uniformly distributed on $[0,1]$.
\end{lem}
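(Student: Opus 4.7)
The strategy combines a re-centering argument with the probability integral transform. Let $\xi^\ast$ be a uniform random point on $\sph_{d-1}$ (with respect to the normalised surface measure $\sigma/\abs{\sph_{d-1}}$), sampled independently of the process $S_H$, and set $\tilde S_H(\eta) := S_H(\eta) - S_H(\xi^\ast)$. A direct calculation from the covariance~(\ref{eqn:covspherical}) shows that, conditionally on $\xi^\ast = \xi_0$, the field $\tilde S_H$ is a centred Gaussian field with covariance
\[
\cov\bigl(\tilde S_H(\eta), \tilde S_H(\zeta) \mid \xi^\ast = \xi_0\bigr) = \tfrac{1}{2}\bigl(d(\eta,\xi_0)^{2H} + d(\zeta,\xi_0)^{2H} - d(\eta,\zeta)^{2H}\bigr),
\]
i.e., $\tilde S_H$ is again an SFBM, but with the role of the north pole $O$ played by $\xi_0$.

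Next, since $\sigma$ is rotation-invariant, the occupation time below zero of an SFBM with any prescribed north pole has the same law as $Z_-$ (via the change of variables induced by a rotation mapping $\xi_0$ to $O$). Hence, integrating out the uniform law of $\xi^\ast$,
\[
\tilde Z_- := \frac{1}{\abs{\sph_{d-1}}} \int_{\sph_{d-1}} \indi{S_H(\xi) \leq S_H(\xi^\ast)} \dd\sigma(\xi) \deq Z_-.
\]
Denote by $\mu_{S_H}$ the pushforward of $\sigma/\abs{\sph_{d-1}}$ under $\xi \mapsto S_H(\xi)$ and by $F_{S_H}$ its CDF; then $\tilde Z_- = F_{S_H}(S_H(\xi^\ast))$. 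Conditional on $S_H$, the independence of $\xi^\ast$ makes $S_H(\xi^\ast)$ distributed exactly according to $\mu_{S_H}$, so if $F_{S_H}$ is almost surely continuous, the classical probability integral transform yields that the conditional law of $\tilde Z_-$ given $S_H$ is $\mathrm{Unif}[0,1]$; integrating out $S_H$ then finishes the proof.

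The main obstacle is thus to verify that $\mu_{S_H}$ has no atoms almost surely. For this, observe that the total squared atom mass equals the diagonal mass of the product measure,
\[
\sum_t \mu_{S_H}(\{t\})^2 = (\mu_{S_H}\otimes\mu_{S_H})\bigl(\{(s,t):s=t\}\bigr) = \frac{1}{\abs{\sph_{d-1}}^2}\iint_{\sph_{d-1}\times\sph_{d-1}} \indi{S_H(\xi)=S_H(\zeta)} \dd\sigma(\xi)\dd\sigma(\zeta).
\]
Taking expectations and applying Fubini, this quantity has vanishing expectation because $S_H(\xi) - S_H(\zeta)$ is a non-degenerate Gaussian of variance $d(\xi,\zeta)^{2H}>0$ off the diagonal, while the diagonal is $\sigma\otimes\sigma$-null. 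Hence $\mu_{S_H}$ is atomless almost surely, which is exactly what the argument requires.
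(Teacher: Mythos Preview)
Your argument is correct. The re-rooting step (conditionally on $\xi^\ast=\xi_0$, the shifted field $\tilde S_H$ is an SFBM pinned at $\xi_0$) follows from the covariance computation you indicate, and rotation invariance of $\sigma$ then gives $\tilde Z_- \deq Z_-$. The identification $\tilde Z_- = F_{S_H}(S_H(\xi^\ast))$ together with the probability integral transform reduces the problem to the almost-sure atomlessness of $\mu_{S_H}$, and your second-moment/diagonal argument for the latter is clean: $\E\bigl[(\mu_{S_H}\otimes\mu_{S_H})(\Delta)\bigr]$ indeed vanishes because $S_H(\xi)-S_H(\zeta)$ is non-degenerate Gaussian off the diagonal and the diagonal is $\sigma\otimes\sigma$-null.

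As for the comparison: the paper does not prove this lemma at all. It is simply quoted as Theorem~2 of \cite{aurdorpit2024} (stated there for $Z_+$; the $Z_-$ version follows from $S_H\deq -S_H$). Your proof is therefore a self-contained substitute for the citation; the re-rooting/probability-integral-transform mechanism you use is in fact the natural route to such uniform occupation-time laws and is presumably close in spirit to the argument in the cited reference.
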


The original result is phrased for occupation time above zero, but symmetry yields the the the same for $Z_-$. We are now ready to prove the upper bound in our main result.

\begin{lem} 
    Let $(S_H(\eta))_{\eta\in \sph_{d-1}}$ be spherical fractional Brownian motion on $\sph_{d-1}$. Then
    \begin{align*}
         \prob{\sup_{\eta\in \sph_{d-1}} S_H(\eta) \leq \eps} 
        \leq \eps^{\frac{d-1}{H} + o(1)}, && \text{ for } \eps \searrow 0.
    \end{align*}
\end{lem}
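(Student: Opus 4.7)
The plan is to combine three ingredients we now have available: the concrete RKHS element $f_{\delta,\alpha}$ from Lemma~\ref{lemRKHS_SFBM_fdelta}, the change-of-measure estimate Lemma~\ref{lemRKHSBound}, and the uniform-distribution result Lemma~\ref{lem_occTimeSFBM} for the negative occupation time of SFBM. Write $p := \P(\sup_{\eta \in \S_{d-1}} S_H(\eta) \leq \eps)$ and fix an arbitrary $\alpha\in(0,1/2)$; the goal is to show $\liminf_{\eps\to 0}\log(1/p)/\log(1/\eps) \geq (d-1)/(H+\alpha)$, from which the claim follows by letting $\alpha\to 0$.

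First, set $A := \{h:\S_{d-1}\to\R : \sup h \leq \eps\}$ and apply Lemma~\ref{lemRKHSBound} to $S_H$ with the shift $\eps f_{\delta,\alpha} \in \mathbb{H}$ (for a parameter $\delta > 0$ to be optimised). The lower bound in that lemma reads
\begin{equation*}
    \P\bigl(S_H + \eps f_{\delta,\alpha} \in A\bigr) \;\geq\; \exp\!\Bigl(-\sqrt{2\,\eps^2\norm{f_{\delta,\alpha}}_\mathbb{H}^2\,\log(1/p)} - \tfrac{1}{2}\eps^2\norm{f_{\delta,\alpha}}_\mathbb{H}^2\Bigr)\, p.
\end{equation*}
Now invoke property \ref{RKHS_SFBM_fda_d3_prop_incr} of Lemma~\ref{lemRKHS_SFBM_fdelta}: since $f_{\delta,\alpha}(\eta)\geq 1$ for $\eta\in\S_{d-1}\setminus \bb_\delta(O)$, the event on the left is contained in $\{S_H(\eta)\leq 0 \text{ for all } \eta\in\S_{d-1}\setminus\bb_\delta(O)\}$.

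Next, comes the geometric/probabilistic reduction. If $S_H\leq 0$ outside $\bb_\delta(O)$, then the negative occupation time $Z_-$ of Lemma~\ref{lem_occTimeSFBM} satisfies $Z_- \geq 1 - \sigma(\bb_\delta(O))/\abs{\S_{d-1}}$. Since $Z_-$ is uniform on $[0,1]$ and $\sigma(\bb_\delta(O)) \leq c\,\delta^{d-1}$ for small $\delta$, this yields $\P(S_H + \eps f_{\delta,\alpha}\in A) \leq c\,\delta^{d-1}$. Substitute the norm bound $\norm{f_{\delta,\alpha}}_\mathbb{H}^2 \leq c\,\delta^{-2(H+\alpha)}$ from property \ref{RKHS_SFBM_fda_d3_prop_normbound}, and choose $\delta := \eps^{1/(H+\alpha)}$; this keeps $\eps^2\norm{f_{\delta,\alpha}}_\mathbb{H}^2$ bounded by a constant $C$ independent of $\eps$, while producing $\delta^{d-1} = \eps^{(d-1)/(H+\alpha)}$.

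Combining everything gives an inequality of the form
\begin{equation*}
    p \;\leq\; \exp\!\Bigl(\sqrt{2C\log(1/p)} + C/2\Bigr)\cdot c\,\eps^{(d-1)/(H+\alpha)},
\end{equation*}
equivalently $\log(1/p) + \sqrt{2C\log(1/p)} \geq \frac{d-1}{H+\alpha}\log(1/\eps) - O(1)$. As $\eps\to 0$ the right-hand side tends to infinity, forcing $\log(1/p)\to\infty$ and hence $\sqrt{\log(1/p)}/\log(1/p)\to 0$, so the square-root remainder is absorbed and $\log(1/p)/\log(1/\eps)\geq (d-1)/(H+\alpha) - o(1)$. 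Since $\alpha\in(0,1/2)$ was arbitrary, taking $\alpha\to 0$ yields $\liminf_{\eps\to 0}\log(1/p)/\log(1/\eps)\geq (d-1)/H$, which is the desired upper bound.

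The main obstacle is the balancing of the two competing parameters $\delta$ and $\eps$: the RKHS norm of $f_{\delta,\alpha}$ blows up like $\delta^{-(H+\alpha)}$, which would destroy the exponential factor in Lemma~\ref{lemRKHSBound} if $\delta$ were kept fixed, while the geometric gain $\delta^{d-1}$ from the occupation time argument requires $\delta$ to shrink. The scaling $\delta = \eps^{1/(H+\alpha)}$ precisely equates the two scales and makes $\eps^2\norm{f_{\delta,\alpha}}_\mathbb{H}^2$ stay bounded. The $\alpha$-loss in the exponent is an unavoidable consequence of the RKHS construction in Lemma~\ref{lemRKHS_SFBM_fdelta} (which needs $\alpha>0$ for summability in (\ref{eqn:estimater34})) and is recovered only in the limit $\alpha\downarrow 0$.
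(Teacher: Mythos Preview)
Your argument is correct and follows essentially the same route as the paper: shift by $\eps f_{\delta,\alpha}$, use Lemma~\ref{lemRKHSBound}, bound the shifted probability via the occupation time (Lemma~\ref{lem_occTimeSFBM}) and the spherical-cap volume, and choose $\delta=\eps^{1/(H+\alpha)}$ so that $\eps^2\norm{f_{\delta,\alpha}}_{\mathbb H}^2$ stays bounded.

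The one genuine difference is how the term $\sqrt{2C\log(1/p)}$ is absorbed. The paper invokes the already-proved lower bound (Lemma~\ref{lem_mainLowerBound}) to get $\log(1/p)\leq \tfrac{2(d-1)}{H}|\log\eps|$ for small $\eps$, so the exponential factor becomes $e^{O(\sqrt{|\log\eps|})}=\eps^{o(1)}$ directly. You instead keep the inequality implicit in $L:=\log(1/p)$, observe that $L+\sqrt{2CL}\geq \tfrac{d-1}{H+\alpha}\log(1/\eps)-O(1)$ forces $L\to\infty$, and then divide through. Both are valid; your bootstrap makes the upper-bound proof logically independent of the lower bound, while the paper's version is slightly more explicit. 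A minor point you leave implicit is that $p\in(0,1)$ is needed to apply Lemma~\ref{lemRKHSBound}; this is harmless ($p<1$ is clear, and if $p=0$ the bound is trivial).
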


\begin{proof}
    Fix an $\alpha > 0$, consider the function $\fda$ from Lemma~\ref{lemRKHS_SFBM_fdelta}, and set $\vphi := 1-\fda$ (we omit the subindices on $\vphi$). We define the parameter $0 < \delta := \eps^{1/(H+\alpha)} < 1$.

    We apply Inequality~(\ref{eq_RKHSBound_lower}) of Lemma~\ref{lemRKHSBound} with the event
    \begin{align*}
        A := A_{\eps} := \{ (x_\eta)_{\eta\in\sph_{d-1}} : x_\eta \leq \eps ~ \forall \eta\in\sph_{d-1} \}.
    \end{align*}
    and obtain
    \begin{align}
        & \prob{\sup_{\eta\in \sph_{d-1}} S_H(\eta) \leq \eps}
        = \prob{(S_H(\eta))_{\eta\in\sph_{d-1}} \in A} \notag \\
        & \quad \leq e^{\sqrt{2\norm{\eps\, \fda}^2 \log( 1/ \prob{(S_H(\eta))_{\eta\in\sph_{d-1}} \in A}}  + \norm{\eps\, \fda}^2/2} \prob{ S_H(\eta) + \eps \fda(\eta) \leq \eps ~ \forall \eta\in\sph_{d-1}}. \label{eqDereichApplied}
    \end{align}
    
    We first prove that, for all $\eps<1$,
    \begin{align}\label{eq_ShInA_upperBound}
        \prob{ S_H(\eta) + \eps\fda(\eta) \leq \eps ~ \forall \eta\in\sph_{d-1}} 
        \leq \frac{2}{\vol{d-1}} \eps^{\frac{d-1}{H+\alpha}}.
    \end{align}
    Indeed, using the definition of $\vphi$ in the first step, the fact that $\fda\geq 1$ on $\sph_{d-1} \setminus \bb_\delta(O)$ by Lemma~\ref{lemRKHS_SFBM_fdelta} \ref{RKHS_SFBM_fda_d3_prop_incr} (and thus $\vphi\leq 0$ on $\sph_{d-1} \setminus \bb_\delta(O)$) in the second step, the definition of $Z_-$ in the third step, and Lemma~\ref{lem_occTimeSFBM} in the fourth step, we obtain
    \begin{align*}
        \prob{ S_H(\eta) + \eps \fda(\eta) \leq \eps ~ \forall \eta\in\sph_{d-1}}
        & = \prob{ S_H(\eta) < \eps~ \vphi(\eta) \;\forall \eta\in \sph_{d-1}} \\
        & \leq \prob{ \sup_{\eta\in \sph_{d-1} \setminus \bb_\delta(O)} S_H(\eta) < 0} \\
        & \leq \prob{ Z_- \;>\; 1- \frac{\abs{\bb_\delta(O)}}{\vol{d-1}} }\\
        & = \frac{\abs{\bb_\delta(O)}}{\vol{d-1}}
        \leq \frac{2 \delta^{d-1}}{\vol{d-1}} 
        = \frac{2}{\vol{d-1}} \eps^{\frac{d-1}{H+\alpha}}.
    \end{align*}
    The upper bound on the volume $\abs{\bb_\delta(O)}$ can be obtained using Lemma~\ref{lem_simpleRadialFunctionIntegration} and the non-restrictive assumption $\delta \leq 1$.
    
    Next, we know using Lemma~\ref{lem_mainLowerBound} that    
    \begin{align}\label{eq_ShInA_lowerBound}
        \prob{ (S_H(\eta))_{\eta\in\sph_{d-1}} \in A}  \geq c ~ \eps^{\frac{d-1}{H}} \left|\log(\eps) \right|^{-\frac{d-1}{2H}} \geq \eps^{\frac{2(d-1)}{H}},
    \end{align}
    for any $\eps < \eps_0$, where $\eps_0$ is some constant depending on $H$ and $d$.

    The RKHS norm $\norm{\eps\, \fda}^2 \leq c$ is uniformly bounded in $\delta = \eps^{1/(H+\alpha)}$ using Lemma~\ref{lemRKHS_SFBM_fdelta} \ref{RKHS_SFBM_fda_d3_prop_normbound} for some constant $c$, which depends on $\alpha, H$ and $d$. Therefore, we can continue from Equation (\ref{eqDereichApplied}) and obtain using (\ref{eq_ShInA_upperBound}) and (\ref{eq_ShInA_lowerBound}) that
    \begin{align*}
       &  e^{\sqrt{2\norm{\eps\, \fda}^2 \log\left(( 1/ \prob{(S_H(\eta))_{\eta\in\sph_{d-1}} \in A}\right)}  + \norm{\eps\, \fda}^2/2} \prob{ S_H(\eta) + \eps\fda(\eta) \leq \eps ~ \forall \eta\in\sph_{d-1}} \\
        & \leq e^{\sqrt{2c^2 \left(\frac{2(d-1)}{H}\right)\abs{\log \eps}} + c^2/2} \frac{2}{\vol{d-1}} \eps^{\frac{d-1}{H+\alpha}}
    \end{align*}
    as soon as $\eps < \eps_0$. Thus we may infer that
    \begin{align*}
        \lim_{\eps\searrow 0} \frac{1}{\log(\eps)} \log  \prob{\sup_{\eta\in \sph_{d-1}} S_H(\eta) \leq \eps}
        \geq \frac{d-1}{H+\alpha}
    \end{align*}
    for any $\alpha > 0$. The statement follows by letting $\alpha \searrow 0$.
\end{proof}

{\bf Acknowledgements.} We would like to thank Moritz Egert and Karsten Gro\ss e-Brauckmann for discussions on different aspects of this paper.
\normalem

\bibliographystyle{alpha}
\bibliography{bibliography.bib}

\appendix

\section{Appendix}

The appendix collects some technical proofs that are slightly off the main thread of the paper.

\begin{proof}[Proof of Lemma~\ref{lem_powerSeries}]
    Let $\eps > 0$ and define the partial series of $p$ and $q$ to be
    \begin{align*}
        p_n(x) := \sum_{i=0}^n a_i x^i, \qquad
        q_n(x) := \sum_{j=0}^n b_j (x-u)^{\gamma+j}
    \end{align*}
    for any $n\in\N$. Note that $(q_n)$ is uniformly convergent, because it is the product of a function bounded on intervals with a uniformly convergent power series. In particular, uniform continuity of $(x-u)^\gamma$ implies uniform continuity of $q$. Using the assumption that $\widehat{q}([u,u+s))\subseteq (-r,r)$ we can infer that because of 
    \begin{align*}
        \abs{\sum_{j=0}^\infty b_j (x-u)^{\gamma+j} } \leq \sum_{j=0}^\infty \abs{b_j} (x-u)^{\gamma+j} < r,
    \end{align*}
    the range $q([u,u+s))\subseteq (-r,r)$. Since $q$ is additionally continuous, and $q(u) = 0$, we may choose any $0<s'<s$ such that $q([u,u+s'])\subseteq [-r_1, r_1]$ for some $0 < r_1 < r$. Now, choose any $r_2$ with $r_1 < r_2 < r$. Using uniform convergence of $(q_n)$ and uniform continuity of $p$, there is an $M_1\in\N$, s.t.\ for all $M\geq M_1$ we have
    \begin{align*}
        q_M(x) \in [-r_2, r_2] & \text{ for all $x\in[u,u+s']$,} \\
        \abs{p(q(x)) - p(q_M(x))}  < \eps & \text{ for all $x\in[u,u+s']$.}
    \end{align*}
    Using uniform convergence of $(p_n)$, there is an $N_1\in\N$, s.t.\ for all $N\geq N_1$ we have
    \begin{align*}
        \abs{p(y) - p_N(y)} < \eps & \text{ for all $y\in [-r_2,r_2]$.}
    \end{align*}
    Thus we can now infer that for any $N\geq N_1$ and $M\geq M_1$ and all $x\in[u,u+s']$, we have
    \begin{align*}
        \abs{p(q(x)) - p_N(q_M(x))} & \leq \abs{p(q(x)) - p(q_M(x))} + \abs{p(q_M(x)) - p_N(q_M(x))} < 2~\eps,
    \end{align*}
    i.e.\ the sequence $p_N(q_M(x))$ is uniformly convergent on $[u,u+s']$, if $N$ and $M$ are simultaneously sent to infinity. Since we assumed that $\widehat{q}([u,u+s))\subseteq (-r,r)$ and absolute convergence of both series, the argument is identical in the case that we replace the coefficients of both series by their absolute values. Thus, together with $\gamma > 0$ we may reorder the representation to obtain the absolutely convergent series
    \begin{align*}
        p(q(x)) = \sum_{i=0}^\infty d_i (x-u)^{\gamma_i}
    \end{align*}
    with a strictly increasing sequence $(\gamma_i)$ with $\gamma_0>0$.

    We want to show that the sequence of term-wise derivatives converges to the derivative of $p(q(x))$ for any $x\in (u,u+s')$ by showing that the sequence of term-wise derivatives converges uniformly on compact subsets $K \subseteq (u,u+s')$. Since $x\in K$ implies that $x-u$ is bounded away from $0$ and $s'$, the term
    \begin{align}\label{eq_supDerivativeSeries}
        \sup_{x\in K,~ y \geq 0} ~ y ~ \left(\frac{(x-u)}{s'}\right)^{y-1} \leq c
    \end{align}
    is bounded for some constant $c > 0$ depending on $K$. Since $\gamma_i > 0$, we can infer that
    \begin{align*}
        \abs{d_i} \gamma_i (x-u)^{\gamma_i-1} = \frac{\abs{d_i} (s')^{\gamma_i}}{s'} ~\gamma_i ~ \left(\frac{(x-u)}{s'}\right)^{\gamma_i-1} \leq \frac{c}{s'} \abs{d_i} (s')^{\gamma_i}.
    \end{align*}
    On the LHS we have the absolute value of the coefficients of the term-wise derivative of the series representation of $p(q(x))$ at some $x\in K$ and on the RHS we have a constant times the absolute value of the coefficients of the series representation of $p(q(x))$ at $x=s'$, which we know converges absolutely at $s'$. Thus, the sequence of term-wise derivatives is uniformly majorised by a convergent sequence and thus converges uniformly on $K$ and its limit is therefore the derivative of $p(q(x))$ for any $x\in K$. Thus, the sequence of term-wise derivatives converges pointwise for any $x\in (u,u+s')$.

    For higher derivatives, note that the function
    \begin{align*}
        y \mapsto & \sup_{x\in K} ~ y (y-1)\cdot \ldots \cdot (y-k) \left( \frac{x-u}{s'} \right)^{y-k-1}
    \end{align*}
    is bounded on $\R_{\geq 0}$. With this bound instead of (\ref{eq_supDerivativeSeries}), we obtain the same result for the $k$-th term-wise derivative, analogously.

    Finally, since the only restriction on $s'$ was $s'<s$, we obtain that the series representation converges uniformly on compacts, in particular pointwise, on the interval $[u,u+s)$, and that the sequence of term-wise $k$-th derivatives converges uniformly on compacts, in particular pointwise, on $(u,u+s)$. The proof for $\widetilde{q}$ can be carried out completely analogously.
\end{proof}

\begin{proof}[Proof of Lemma~\ref{lem_arccosRepAtSing}]
    We first examine the behaviour of Arccos for $t\searrow -1$ and make use of the fact that
    \begin{align*}
        \pi - \arccos(t) = \int_{-1}^t (1-x^2)^{-\eh} \dd x.
    \end{align*}
    Let $m\geq 0$ and note that
    \begin{align*}
        (1-x)^{-\eh} &= \sum_{i=0}^m (-1)^i \; 2^{-\eh-i} \binom{-\eh}{i} (1+x)^i + (-1)^{m+1} \; \binom{-\eh}{m+1} (1-a_x)^{\eh-m-2} (1+x)^{m+1} \\
        & =: T_m + R_{m}
    \end{align*}
    is the $m$-th order Taylor expansion $T_m$ of $(1-x)^{-\eh}$ at $x=-1$ with remainder $R_m$, where $-1<a_x<x$. Observe that
    \begin{align*}
        \int_{-1}^t (1+x)^{-\eh} T_m \dd x = \sum_{i=0}^m (-1)^i \; \frac{2^{-\eh-i}}{\eh+i} \binom{-\eh}{i} (1+t)^{\eh+i}.
    \end{align*}
    Let us subtract this integral from $\pi - \arccos(t)$ so that the remainder is given by
    \begin{align}\label{eq_fhAsympTaylorCalc1}
        \pi-\arccos(t) - \int_{-1}^t (1+x)^{-\eh} T_m \dd x 
        & = \int_{-1}^t (1-x^2)^{-\eh} - (1+x)^{-\eh} T_m \dd x \notag \\
        & = \int_{-1}^t (1+x)^{-\eh}\left( (1-x)^{-\eh} - T_m \right) \dd x \notag \\
        & = \int_{-1}^t (1+x)^{-\eh} R_{m} \dd x.
    \end{align}
    Since we are investigating the case $t\searrow -1$, we may assume that $t\leq 0$. This, together with the fact that $a_x < x \leq t \leq 0$, implies $(1-a_x)^{\eh-m-2} \leq 1$. Additionally using that $(-1)^{m+1} \binom{-\eh}{m+1}$ is non-negative for any $m\geq 0$, we obtain
    \begin{align*}
        R_{m} \leq (-1)^{m+1} \; \binom{-\eh}{m+1} (1+x)^{m+1}.
    \end{align*}
    Continuing from (\ref{eq_fhAsympTaylorCalc1}), we conclude that
    \begin{align*}
        \int_{-1}^t  (1+x)^{-\eh} R_{m} \dd x 
         & \leq (-1)^{m+1} \; \binom{-\eh}{m+1} \int_{-1}^t (1+x)^{\eh+m} \dd x \\
        & \in \OO\left((1+t)^{\eh+m+1}\right),
    \end{align*}
    which proves the desired expansion for $t\searrow -1$. The expansion for $t\nearrow +1$ is implied using the fact that $\arccos(-t) = \pi - \arccos(t)$.

    We need to check Assumption~\ref{assumptionLegendreAsymp}\ref{ass_term-wiseDiff}. To this end, note that
    \begin{align*}
        (-1)^i\binom{-\eh}{i} =
        \frac{\Gamma(i-\eh+1)}{\Gamma(i+1) \Gamma(1-\eh)} \sim c \; i^{-\eh},
    \end{align*}
    which allows us to conclude that
    \begin{align*}
        \sum_{i=1}^\infty \abs{(-1)^i \; \frac{2^{-\eh-i}}{\eh+i} \binom{-\eh}{i} (1+t)^{\eh+i}}
        & \leq c ~ \sum_{i=1}^\infty ~\frac{1}{\eh+i} ~ i^{-\eh} \left(\frac{1+t}{2}\right)^{\eh+i} \\
        & \leq c ~ \sum_{i=0}^\infty i^{-\eh-1} \left(\frac{1+t}{2}\right)^{\eh+i}
    \end{align*}
    converges absolutely on the domain $[-1,1]$. We can therefore split the series expansion at the singularities into a a power series and a remaining factor
    \begin{align*}
        \sum_{i=0}^\infty (-1)^i \; \frac{2^{-\eh-i}}{\eh+i} \binom{\eh-1}{i} (1+t)^{\eh+i}
        = (1+t)^\eh \sum_{i=0}^\infty (-1)^i \; \frac{2^{-\eh-i}}{\eh+i} \binom{-\eh}{i} (1+t)^{i}.
    \end{align*}
    The power series is absolutely convergent and therefore term-wise differentiable in the interior of its domain of convergence $[-3,1]$. The remaining factor is smooth on $(-1,\infty)$. Thus, we can infer that the expansion at the singularity is term-wise differentiable on $(-1,1)$. The same can be inferred by symmetry for the expansion at the singularity $+1$. 
\end{proof}

\begin{lem} Let $(S_H(\eta))_{\eta\in\mathbb{S}_{d-1}}$ be SFBM. 
    Then the constant 
    $$
    \prob{S_H(\eta) \leq 0 \; \forall \eta\in \hh(\ol{O})} 
    $$
    is strictly positive.
    \label{lem:constantpositive}
\end{lem}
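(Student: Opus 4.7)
My plan is to use the Gaussian regression of $S_H(\eta)$ onto the single random variable $S_H(\ol{O})$ in order to show that if $S_H(\ol{O})$ is sufficiently negative then all of $S_H$ on $\hh(\ol{O})$ is automatically non-positive. Define
\[ a(\eta) := \frac{\E[S_H(\eta) S_H(\ol{O})]}{\E[S_H(\ol{O})^2]}, \qquad Z(\eta) := S_H(\eta) - a(\eta) S_H(\ol{O}). \]
Since $(S_H(\eta), S_H(\ol{O}))_{\eta \in \sph_{d-1}}$ is jointly centred Gaussian and $\E[Z(\eta) S_H(\ol{O})] = 0$ by construction of $a$, the entire process $Z$ is independent of the scalar random variable $S_H(\ol{O})$. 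Moreover, $Z$ is almost surely continuous on the compact set $\hh(\ol{O})$ (since $a$ depends continuously on $\eta$ and $S_H$ is a.s.\ continuous), so $\sup_{\hh(\ol{O})} Z$ is a.s.\ finite.

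Next I compute $a(\eta)$ explicitly. On the sphere the antipodal identity $d(\eta, O) + d(\eta, \ol{O}) = \pi$ holds for every $\eta \in \sph_{d-1}$ (since $\langle \eta, O\rangle = -\langle \eta, \ol{O}\rangle$ and $\arccos(-x) = \pi - \arccos(x)$). Writing $u := d(\eta, O)$ and using (\ref{eqn:covspherical}) together with $\E[S_H(\ol{O})^2] = \pi^{2H}$, this yields
\[ a(\eta) = \frac{u^{2H} + \pi^{2H} - (\pi - u)^{2H}}{2\pi^{2H}} = \frac{1}{2} + \frac{u^{2H} - (\pi - u)^{2H}}{2\pi^{2H}}. \]
For $\eta \in \hh(\ol{O})$ we have $u \in [\pi/2, \pi]$, so $u \geq \pi - u \geq 0$, and by monotonicity of $x \mapsto x^{2H}$ on $[0, \infty)$ the bound $a(\eta) \geq 1/2$ follows.

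For $M > 0$ consider the event
\[ A_M := \{S_H(\ol{O}) \leq -M\} \cap \Big\{ \sup_{\eta \in \hh(\ol{O})} Z(\eta) \leq M/2 \Big\}. \]
On $A_M$ and for any $\eta \in \hh(\ol{O})$, the bounds $a(\eta) \geq 1/2$ and $S_H(\ol{O}) \leq -M < 0$ give $a(\eta) S_H(\ol{O}) \leq -a(\eta) M \leq -M/2$, hence $S_H(\eta) = a(\eta) S_H(\ol{O}) + Z(\eta) \leq -M/2 + M/2 = 0$. Therefore
\[ \prob{S_H(\eta) \leq 0 \; \forall \eta \in \hh(\ol{O})} \geq \prob{A_M} = \prob{S_H(\ol{O}) \leq -M} \cdot \prob{\sup_{\hh(\ol{O})} Z \leq M/2}, \]
where the equality uses the independence of $Z$ and $S_H(\ol{O})$. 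The first factor is strictly positive for any $M$ since $S_H(\ol{O})$ is a non-degenerate centred Gaussian with variance $\pi^{2H}$, and the second factor is strictly positive once $M$ is chosen large enough, because $\sup_{\hh(\ol{O})} Z < \infty$ almost surely. I do not expect a real obstacle here; the only slightly non-routine step is the inequality $a(\eta) \geq 1/2$, which is an elementary consequence of the antipodal identity on the sphere.
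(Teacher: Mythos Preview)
Your proof is correct and takes a genuinely different route from the paper's. The paper argues via H\"older regularity: it invokes a Kolmogorov--Chentsov theorem to get a modification whose H\"older constant has finite expectation, uses this to show that $S_H(\zeta)<-1$ forces non-positivity on a small ball around $\zeta$, covers $\hh(\ol{O})$ by finitely many such balls, and then glues the pieces together with Slepian's inequality (Corollary~\ref{cor_SlepianPosCorr}). Your argument instead conditions on the single Gaussian variable $S_H(\ol{O})$: the regression residual $Z$ is independent of $S_H(\ol{O})$, and the antipodal identity $d(\eta,O)+d(\eta,\ol{O})=\pi$ forces the regression coefficient $a(\eta)\geq 1/2$ uniformly on $\hh(\ol{O})$, so a large negative value of $S_H(\ol{O})$ drags the whole hemisphere down. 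This is shorter and more elementary---no Kolmogorov--Chentsov, no covering, no Slepian---and it exploits the spherical geometry (the antipodal relation) more directly. The paper's approach, on the other hand, is more robust in that it would transfer to other compact index sets or other Gaussian fields where no single anchor point carries such uniform influence.
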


\begin{proof}
    We will employ a version of the Kolmogorov-Chentsov continuity theorem. For a general reference in the Euclidean $d$-dimensional case, see \cite[Theorem 21.6 and Remark 21.7]{Klenke2020}. The case for more generalised metric spaces can be found in \cite[Theorem 1.1]{Kraetschmer2022}. The necessary assumptions for our SFBM may be inferred from Remark 1 and Proposition 3.1 (ii) in \cite{Kraetschmer2022}. We obtain that for any $h<H$ there exists an $h$-H\"older continuous modification $(S_H^*)$. W.l.o.g.\ we assume $(S_H^*) = (S_H)$. We furthermore obtain that there exists a non-random constant $K>0$, such that
    \begin{align*}
        \expec{\sup_{\substack{ \eta,\xi\in\sph_{d-1} \\ \eta\neq\xi }}\frac{\abs{S_H(\eta) - S_H(\xi)}}{d(\eta,\xi)^h} } \leq K.
    \end{align*}
    Defining $K_\eps:= \frac{K}{\eps}$, we may infer using Markov's inequality that
    \begin{align*}
        \prob{\exists \eta,\xi\in\sph_{d-1} : \abs{S_H(\eta) - S_H(\xi)} > K_\eps ~ d(\eta,\xi)^h } \leq \frac{\expec{\sup_{\substack{ \eta,\xi\in\sph_{d-1} \\ \eta\neq\xi }}\frac{\abs{S_H(\eta) - S_H(\xi)}}{d(\eta,\xi)^h} }}{K_\eps} \leq \eps.
    \end{align*}
    Define the event $C_{h,\eps} := \{\abs{S_H(\eta) - S_H(\xi)} \leq K_\eps ~ d(\eta,\xi)^h \quad \forall \eta,\xi\in\sph_{d-1}\}$. Let $\zeta\in\hh(\ol{O})$ and consider the spherical-distance ball $\bb_{K_\eps^{-1/h}}(\zeta)$ of radius $K_\eps^{-1/h}$ with centre $\zeta$ (cf.\ Eq. (\ref{def_sphDistBall})). We first prove that in the event $C_{h,\eps}$ it suffices to control the value of $S_H(\zeta)$ to infer non-positivity of $S_H(\eta)$ for all $\eta\in \bb_{K_\eps^{-1/h}}(\zeta)$, i.e.
    \begin{align*}
        \prob{\left\{ S_H(\zeta) < -1 \right\} \cap C_{h,\eps} }
        & = \prob{\left\{ S_H(\zeta) < -1 \right\} \cap \{\abs{S_H(\eta) - S_H(\xi)} \leq K_\eps ~ d(\eta,\xi)^h ~ \forall \eta,\xi\in\sph_{d-1}\} } \\
        & \leq \prob{\left\{ S_H(\zeta) < -1 \right\} \cap \{\abs{S_H(\eta) - S_H(\zeta)} \leq 1 ~ \forall \eta\in\bb_{K_\eps^{-1/h}(\zeta)\} } } \\
        & \leq \prob{S_H(\eta) \leq 0 \; \forall \eta\in \bb_{K_\eps^{-1/h}(\zeta)} }.
    \end{align*}
    Now let $X \sim \mathcal{N}\left(0,\left(\frac{\pi}{2}\right)^{2H}\right)$ be an independently chosen Gaussian random variable and let $\zeta \in \hh(\ol{O})$. Then
    \begin{align*}
        0 & < \prob{X < -1 }\\
        & \leq \prob{S_H(\zeta) < -1 }\\
        & = \prob{\left\{ S_H(\zeta) < -1 \right\} \cap C_{h,\eps} } + \prob{ \left\{S_H(\zeta) < -1\right\} \cap C_{h,\eps}^c } \\
        & \leq \prob{S_H(\eta) \leq 0 \; \forall \eta\in \bb_{K_\eps^{-1/h}(\zeta)} } + \prob{ C_{h,\eps}^c } \\
        & = \prob{S_H(\eta) \leq 0 \; \forall \eta\in \bb_{K_\eps^{-1/h}(\zeta)} } + \eps.
    \end{align*}
    Let us set $\eps:= \frac{\prob{X < -1 }}{2}$. Then the probability $\prob{S_H(\eta) \leq 0 \; \forall \eta\in \bb_{K_\eps^{-1/h}(\zeta)} }$ is uniformly bounded away from zero for all $\zeta\in\hh(\ol{O})$ by $\prob{X < -1 }/2$.  We may then choose finitely many $\zeta_1, \ldots, \zeta_N \in \hh(\ol{O})$ for some $N\in\N$ depending on $\eps$, such that the lower hemisphere can be covered by spherical-distance balls of radius $K_\eps^{-1/h}$ centred at $(\zeta_i)$, i.e.\ $\hh(\ol{O}) \subseteq \bigcup_{i=1,\ldots,N} \bb_{K_\eps^{-1/h}}(\zeta_i)$. Finally, using Slepian's lemma (in the version of Corollary \ref{cor_SlepianPosCorr}), we obtain
    \begin{align*}
        \prob{S_H(\eta) \leq 0 \; \forall \eta\in \hh(\ol{O})}
        & \geq \prod_{i=1}^N \prob{S_H(\eta) \leq 0 \; \forall \eta\in \bb_{K_\eps^{-1/h}}(\zeta_i)} \\
        & \geq \left[\frac{\prob{X < -1 }}{2}\right]^N > 0.\qquad\qquad \qedhere
    \end{align*}
\end{proof}

\end{document}